\newcommand{\iint}{\int\!\!\!\int}
\newcommand{\eqref}[1]{(\ref{#1})}
\newtheorem{proposition}{Proposition}[section]
\newtheorem{theorem}[proposition]{Theorem}
\newtheorem{corollary}[proposition]{Corollary}
\newtheorem{lemma}[proposition]{Lemma}
\newcommand{\I}{\mathbf{1}}
\newcommand{\nc}{\newcommand}
\newcommand{\eps}{\varepsilon}
\nc{\R}{{\mathbb R}}
\nc{\N}{{\mathbb N}}
\nc{\Z}{{\mathbb Z}}
\nc{\id}{\operatorname{id}}
\nc{\card}{\operatorname{card}}
\nc{\supp}{\operatorname{supp}}
\nc{\BP}{\mathbb{P}}
\nc{\BQ}{\mathbb{Q}}
\nc{\BE}{\mathbb{E}}
\begin{document}
\begin{frontmatter}

\title{Unbiased shifts of Brownian motion\thanksref{T1}}
\runtitle{Unbiased shifts of Brownian motion}
\thankstext{T1}{Supported by a grant from the {Royal Society}.}

\begin{aug}
\author[A]{\fnms{G\"unter} \snm{Last}\corref{}\ead[label=e1]{guenter.last@kit.edu}},
\author[B]{\fnms{Peter} \snm{M\"orters}\ead[label=e2]{maspm@bath.ac.uk}}
\and
\author[C]{\fnms{Hermann} \snm{Thorisson}\ead[label=e3]{hermann@hi.is}}
\runauthor{G. Last, P. M\"orters and H. Thorisson}
\affiliation{Karlsruhe Institute of Technology, University of Bath
and\break
University of Iceland}
\address[A]{Institut f\"ur Stochastik\\
Karlsruhe Institute of Technology\\
Kaiserstra{\ss}e 89\\
D-76128 Karlsruhe\\
Germany\\
\printead{e1}} %adresu isvedimo komanda gale!
\address[B]{Department of Mathematical Sciences\\
University of Bath\\
Claverton Down, Bath BA2 7AY\\
United Kingdom\\
\printead{e2}}
\address[C]{Science Institute\\
University of Iceland\\
Dunhaga 3, 107 Reykjavik\\
Iceland\\
\printead{e3}}
\end{aug}

% HISTORY:
\received{\smonth{12} \syear{2011}}
\revised{\smonth{9} \syear{2012}}

% ABSTRACT
%
\begin{abstract}
Let $B=(B_t)_{t\in\R}$ be a two-sided standard Brownian motion.
An \emph{unbiased shift}~of~$B$ is a random time~$T$, which
is a measurable function of~$B$, such that $(B_{T+t}-B_T)_{t\in\R}$
is a Brownian motion independent of~$B_T$. We characterise unbiased shifts
in terms of allocation rules balancing
% P changed as dicussed
mixtures of local times of~$B$.
For any probability distribution~$\nu$ on $\R$ we construct a stopping
time $T\ge0$ with the above properties such that $B_T$ has
distribution~$\nu$.
We also study moment
and minimality properties of unbiased~shifts.
A crucial ingredient of our approach is a new theorem
on the existence of allocation rules
balancing stationary diffuse random measures on~$\R$.
% P added as suggested
Another new result is an analogue for diffuse random
measures on $\R$ of the cycle-stationarity characterisation of
Palm versions of stationary simple point processes.
\end{abstract}

% KEYWORDS
% Pirmas kwd is didziosios raides
%
\begin{keyword}[class=AMS]
\kwd{60J65}
\kwd{60G57}
\kwd{60G55}
\end{keyword}
\begin{keyword}
\kwd{Brownian motion}
\kwd{local time}
\kwd{unbiased shift}
\kwd{allocation rule}
\kwd{Palm measure}
\kwd{random measure}
\kwd{Skorokhod embedding}
\end{keyword}

\end{frontmatter}

%s1 #&#
\section{Introduction and main results}\label{sintro}

Let $B=(B_t)_{t\in\R}$ be a two-sided standard Brownian motion in $\R$
having $B_0=0$. If $T\ge0$ is a stopping time with respect
to the filtration $(\sigma\{B_s \dvtx s\le t\})_{t\ge0}$,
then the shifted process $(B_{T+t}-B_T)_{t\ge0}$ is a one-sided Brownian
motion independent of $B_T$. However, the two-sided shifted process
$(B_{T+t}-B_T)_{t\in\R}$ need not be
a two-sided Brownian motion. Moreover, the example of a
fixed time shows that even if it is, it need not
be independent of $B_T$. We call a random time~$T$
an \emph{unbiased shift} (of a two-sided Brownian motion) if
$T$ is a measurable function of $B$, and $(B_{T+t}-B_T)_{t\in\R}$
is a two-sided Brownian motion, independent of $B_T$.
We say that a random time $T$ \emph{embeds} a given
probability measure $\nu$ on $\R$, often called the \emph{target distribution},
if $B_T$ has distribution~$\nu$. %\pagebreak[3]

In this paper we discuss several
examples of nonnegative unbiased shifts that are stopping times.
However, we wish to stress that nonnegative
unbiased shifts are not assumed to have the stopping time property;
see, for instance, Example~\ref{exnonstop}.
% P italicised first, second, third to improve readability
The paper has three main aims. The \emph{first} aim is to characterise
all unbiased shifts that embed a given
distribution~$\nu$. The \emph{second} aim is to construct such unbiased shifts.
In particular, we solve the Skorokhod embedding
problem for unbiased shifts: given
any target distribution we find an unbiased shift which embeds this
target distribution
(and is also a stopping time).
The \emph{third} and final aim is to discuss % P removed some
properties of unbiased shifts. In particular, we discuss optimality of
our solution of
the Skorokhod embedding problem for unbiased~shifts.

The case when the embedded distribution
is concentrated at zero is of special interest.
Let $\ell^0$ be the local time at zero.
Its right-continuous (generalised) inverse is
defined by
%
%
%e1.1 #&#
\begin{equation}
\label{Tr} T_r:= \cases{ \sup\bigl\{t \geq0 \dvtx \ell^0
[0,t] = r\bigr\}, &\quad $r \geq0,$\vspace*{2pt}
\cr
\sup\bigl\{t < 0 \dvtx
\ell^0 [t,0] = -r\bigr\}, &\quad $r < 0.$}
\end{equation}
Note that $\BP_0\{T_0= 0\}=1$
and $\BP_0\{T_r= 0\}=0$ if $r\ne0$. We prove the following theorem.

%th1.1 #&#
\begin{theorem}\label{littletwin}
Let $r\in\R$. Then $T_r$ is an unbiased shift embedding $\delta_0$.
\end{theorem}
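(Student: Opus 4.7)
Since $T_r$ lies in the closed support of $\ell^0$, which equals the zero set of $B$, we have $B_{T_r}=0$ almost surely; so $B_{T_r}\sim\delta_0$, and independence of the shifted process from the (a.s.\ constant) value $B_{T_r}$ is automatic. It remains to show that $W:=(B_{T_r+t})_{t\in\R}$ is itself a two-sided standard Brownian motion. I shall treat the case $r>0$; the case $r<0$ follows by symmetry (using that $t\mapsto B_{-t}$ is again a two-sided Brownian motion), and $r=0$ is trivial.

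Write $B^+:=(B_t)_{t\ge 0}$ and $B^-:=(B_{-t})_{t\ge 0}$, so that $B^\pm$ are independent standard Brownian motions started at $0$. Since $T_r$ is a stopping time for $B^+$, the strong Markov property, together with the independence of $B^+$ and $B^-$, shows that the forward piece $(B_{T_r+t})_{t\ge 0}$ is a Brownian motion starting at $0$ and is independent of $\mathcal{F}^+_{T_r}\vee\sigma(B^-)$. The main work lies in the backward piece $\hat B_u:=B_{T_r-u}$, $u\ge 0$, which at $u=T_r$ stitches the time-reversal of $B^+|_{[0,T_r]}$ to a translated copy of $B^-$ restarted at $0$.

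The crux of my approach is the joint time-reversal identity
\[
\bigl((B_{T_r-u})_{0\le u\le T_r},\,T_r\bigr)\stackrel{d}{=}\bigl((B_u)_{0\le u\le T_r},\,T_r\bigr),
\]
which I would establish via It\^o excursion theory. The path $B^+|_{[0,T_r]}$ is encoded by a Poisson point process of excursions on the local-time interval $[0,r]$ under the It\^o excursion measure $n$. Now $n$ is invariant under time-reversal of individual excursions (each Brownian excursion is reversible), and the Poisson process on $[0,r]$ is invariant under $\rho\mapsto r-\rho$; combining these two symmetries shows that the reversed path, together with the recovered total length $T_r=\int_{[0,r]}\zeta(e_\rho)\,d\rho$, has the original law. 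Granting the identity and using the independence of $B^-$ from $B^+$, the process $\hat B$ has the same distribution as $(B_u)_{0\le u\le T_r}$ followed by an independent Brownian motion restarted at $B_{T_r}=0$, which by strong Markov is itself a Brownian motion.

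It remains to observe that $\hat B$ is measurable with respect to $\mathcal{F}^+_{T_r}\vee\sigma(B^-)$, whereas the forward piece is independent of this $\sigma$-algebra; hence the two pieces are independent and $W$ is a two-sided standard Brownian motion. The only substantive obstacle in this plan is the time-reversal identity at the random time $T_r$; everything else reduces to routine strong-Markov bookkeeping and the gluing of independent pieces. Intuitively the identity is transparent from the regenerative structure of the local-time clock, but a rigorous justification really does require excursion theory (or an equivalent Williams-type path decomposition).
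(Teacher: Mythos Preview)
Your argument is correct, but it takes a genuinely different route from the paper. The paper deduces Theorem~\ref{littletwin} from Lemma~\ref{Palmlocal} (identifying $\BP_0$ as the Palm measure of $\ell^0$) together with the general characterisation of mass-stationarity in Theorem~\ref{mass-stat}. The relevant implication there is purely measure-theoretic: one checks by an elementary calculation (see \eqref{867}) that the allocation rule $\tau^r(t)=T_r\circ\theta_t+t$ balances $\ell^0$ with itself, and then the abstract shift theorem (Theorem~\ref{th2}) from \cite{LaTh09} gives $\theta_{T_r}B\stackrel{d}{=}B$ under $\BP_0$. No excursion theory is needed, and the argument works verbatim for the generalised inverse of \emph{any} diffuse invariant random measure, which is precisely the content of Theorem~\ref{mass-stat}.

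Your approach instead handles the path directly: strong Markov for the forward piece, and the time-reversal identity $\bigl((B_{T_r-u})_{0\le u\le T_r},T_r\bigr)\stackrel{d}{=}\bigl((B_u)_{0\le u\le T_r},T_r\bigr)$ for the backward piece, proved via reversibility of the It\^o measure $n$ and the $\rho\mapsto r-\rho$ symmetry of the excursion PPP on $[0,r]$. This is valid and the gluing of the two independent halves is carried out correctly (note that $(B_{T_r+t})_{t\ge0}\perp\mathcal{F}^+_{T_r}$ and $(B_{T_r+t})_{t\ge0},\mathcal{F}^+_{T_r}$ jointly independent of $B^-$ does give $(B_{T_r+t})_{t\ge0}\perp\mathcal{F}^+_{T_r}\vee\sigma(B^-)$). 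The trade-off is that you avoid the Palm machinery but instead rely on a Brownian-specific and less elementary input (excursion theory), whereas the paper's balancing calculation is a few lines and yields the much more general Theorem~\ref{mass-stat} at no extra cost.
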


This result formalises the intuitive idea that two-sided
Brownian motion looks globally the same from all its
(appropriately chosen) zeros,
thus resolving an issue raised by Mandelbrot in \cite{Ma82}, pages 207 and
385,
and reinforced in \cite{KaVere96,Thor00}.
Another way of thinking about this result is that if we travel in time
according to the clock of local time,
we always see a two-sided Brownian motion.

The property described in Theorem~\ref{littletwin} is analogous to a
well-known feature
of the two-sided stationary Poisson process
with an extra point at the origin: the lengths of the intervals between points
are i.i.d. (exponential) and therefore shifting the origin
to the $n$th point on the right (or on the left)
gives us back a two-sided Poisson process with
an extra point at the origin.
In the Poisson case the process with an extra point at the origin
is the Palm version of the stationary process, and
it is a well-known characterising property
of Palm versions of stationary point processes on the line
that their distributions do not change when the origin is shifted along
the points.

% P extended discussion below
In fact, much of the work behind the present paper was
inspired and motivated by the recent literature on matching and
allocation problems.
There is a strong analogy between the problem of finding an extra head
in a two-sided sequence of independent fair coin tosses, as discussed
in~\cite{L02},
and the problem of finding an unbiased shift for Brownian motion embedding
a given probability distribution. Unlocking this analogy was key to the
solution of
the latter problem. But the analogy extends further to the more recent
developments for spatial point processes and random
measures~\cite{HP05,HPPS09,LaTh09}. In the terminology of \cite{LaTh09},
%hermann 23082012 replaced "\eqref{Tr}~implies" by the next line
Theorem~\ref{littletwin} means that Brownian motion is
\emph{mass-stationary} with respect to local time; see Section~\ref{secmassstat}
below. Holroyd and Peres~\cite{HP05} consider the balancing of
Lebesgue measure and a
%hermann 1408 changed order of the next three words:
stationary ergodic spatial point process, obtaining the Palm version of
the point process by
%hermann 1408 changed moving to shifting:
shifting the origin to the associated point of the process.
Last and Thorisson~\cite{LaTh09} extend these ideas to
the balancing of general random measures
in an abstract group setting.
% P Replaced "That" with "This" below
This general theory, and Poisson-matching ideas from
\cite{HPPS09}, are essential for the present paper
where we consider the balancing of local times at different levels.

Theorem \ref{littletwin} is relatively elementary.
% P changed following sentence and replaced several local links to
%later parts by
% two global ones at the beginning and end of this paragraph.
To state the further main results of this paper we now briefly
introduce some notation and terminology, full details for our framework
will be given in Section~\ref{secPalm}. To begin with, it is convenient to
define $B$ as the identity on the canonical probability space
$(\Omega,\mathcal{A},\BP_0)$, where $\Omega$ is the set
of all continuous functions $\omega\dvtx\R\rightarrow\R$,
$\mathcal{A}$ is the Kolmogorov product $\sigma$-algebra,
and $\BP_0$ is the distribution of $B$. Define
$\BP_x:=\BP_0\{B+x\in\cdot\}$, $x\in\R$, and
the $\sigma$-finite and stationary measure
% P moved(see Section~\ref{secPalm}) behind formula to improve
%readability
%
%
%e1.2 #&#
\begin{equation}
\label{statP} \BP:=\int\BP_x \,dx.
\end{equation}
%
% P removed: see Section~\ref{secPalm}.
Expectations (resp., integrals) with respect to $\BP_x$ and $\BP$ are denoted
by $\BE_x$ and $\BE_\BP$, respectively.
For any $t\in\R$ the shift $\theta_t\dvtx\Omega\rightarrow\Omega$ is
defined by
%
%
%e1.3 #&#
\begin{equation}
\label{shiftB} (\theta_t\omega)_s:=
\omega_{t+s}.
\end{equation}
An \emph{allocation rule} \cite{HP05,LaTh09} is a measurable mapping
$\tau\dvtx\Omega\times\R\rightarrow\R$ that is \emph{equivariant}
in the sense that
%
%
%e1.4 #&#
\begin{equation}
\label{allocation} \tau(\theta_t\omega,s-t)=\tau(\omega,s)-t, \qquad s,t\in
\R, \mbox{$\BP$-a.e. $\omega\in\Omega$}.
\end{equation}
A \emph{random measure} $\xi$ on $\R$ is a kernel
from $\Omega$ to $\R$ such that $\xi(\omega,C)<\infty$
for $\BP$-a.e. $\omega$ and all compact $C\subset\R$.
If $\xi$ and $\eta$ are random measures, and
$\tau$ is an allocation rule such that the image measure
of $\xi$ under $\tau$ is $\eta$, that is,
%
%
%e1.5 #&#
\begin{equation}
\label{xe} \int\I\bigl\{\tau(s)\in\cdot\bigr\} \xi(ds)=\eta,\qquad \BP\mbox{-a.e.},
\end{equation}
then we say that $\tau$ \emph{balances} $\xi$ and $\eta$.
If $\tau$ balances $\xi$ and $\eta$, and $\sigma$ is
an allocation rule that balances $\eta$ and another
random measure $\zeta$, then the allocation rule
% P replaced $s\mapsto\sigma(\tau(s))$ for consistency
$\sigma\circ\tau$ balances $\eta$ and $\zeta$.
Let $\ell^x$ be the random measure
associated with the \emph{local time} of $B$
at $x\in\R$.
% P removed "(under~$\BP_0$)," as we need all this defined under
% $\BP$ as well. This is not the place to discuss this fine
% point and some other subtleties, which will be done carefully in
% Section~\ref{secPalm} as announsed at the beginning and end of this
%paragraph.
For a locally finite measure $\nu$ on $\R$ we define
%
%
%e1.6 #&#
\begin{equation}
\label{ellnu} \ell^\nu(\omega,\cdot):=\int\ell^x(\omega,
\cdot) \nu(dx), \qquad \omega\in \Omega.
\end{equation}
%
% P The next two sentences contained three references to the future,
%making this
% unreadable. We really need to let go a little in our willingness to be
% accurate as otherwise we cannot describe the results without lots of
%notation.
% I removed all the references and replaced them by a single one, which
%suits
% the same purpose, I think.
Since $\ell^x$ is supported by $\{t\in\R\dvtx B_t=x\}$
%(see \eqref{0y} below)
and $B$ is bounded on bounded intervals,
we obtain
%from \eqref{int1} below
that $\ell^\nu$ is
$\BP$-a.e. finite on bounded sets and hence a random measure.
% P removed "It is important to note that" and instead gave the formula
The random measure $\ell^\nu$ has the invariance property
\[
\ell^\nu(\theta_t\omega,C-t)=\ell^\nu(
\omega,C), \qquad C\in\mathcal{B},t\in\R, \BP\mbox{-a.s.}
\]
For any random time $T$ we define an allocation rule
$\tau_T$ by
%
%
%e1.7 #&#
\begin{equation}
\label{tauT} \tau_T(t):=T\circ\theta_t+t,\qquad t\in\R.
\end{equation}
Since $T=\tau_T(0)$, there is a
one-to-one correspondence
between $T$ and $\tau_T$. Let us emphasise again that Section~\ref{secPalm} will provide
further details regarding the notation introduced in this paragraph.

%hermann made a paragrah-break
Our %first
key characterisation
theorem is based on a result
in \cite{LaTh09}, which will be recalled as Theorem~\ref{th2} below.

%th1.2 #&#
\begin{theorem}\label{main1} Let $T$ be a random time
and $\nu$ be a probability measure on~$\R$. Then
$T$ is an unbiased shift embedding $\nu$ if and
only if $\tau_T$ balances $\ell^0$ and $\ell^\nu$.
\end{theorem}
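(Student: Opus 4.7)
The plan is to reduce the claim to the general characterisation of balancing allocation rules in terms of Palm measures, which is the content of Theorem \ref{th2} (recalled from \cite{LaTh09}). The strategy has three parts: first, compute the Palm measures of $\ell^0$ and $\ell^\nu$ with respect to the stationary measure $\BP$; second, invoke Theorem \ref{th2}; third, translate the resulting Palm-measure identity into the definition of an unbiased shift.

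For the first step I would use the occupation-time formula to check that $\BE\,\ell^x([0,1]) = \int \int_0^1 p_t(y-x)\,dt\,dy = 1$, so each $\ell^x$ has unit intensity under $\BP$, and more importantly that the Palm measure of $\ell^x$ with respect to $\BP$ coincides with $\BP_x$. This is the classical interpretation of local time as the Palm measure of Brownian motion with respect to the clock of visits to level~$x$, and it fits the general framework for diffuse additive functionals with Revuz measure alluded to in the excerpt. By linearity of Palm measures, it follows that $\ell^\nu = \int \ell^x\,\nu(dx)$ has Palm measure $\BP_\nu := \int \BP_x\,\nu(dx)$, the distribution of a standard Brownian motion translated by an independent $\nu$-distributed random variable.

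For the second step, Theorem \ref{th2} asserts that an allocation rule $\tau$ balances two jointly stationary diffuse random measures $\xi$ and $\eta$ (of equal, finite intensity) if and only if $\theta_{\tau(0)}$ pushes the Palm measure of $\xi$ forward to the Palm measure of $\eta$. Applied to $\xi=\ell^0$ and $\eta=\ell^\nu$, and using $\tau_T(0)=T$ from \eqref{tauT}, this says that $\tau_T$ balances $\ell^0$ and $\ell^\nu$ if and only if $\theta_T$ sends $\BP_0$ to $\BP_\nu$. For the third step I would unpack this push-forward identity: under $\BP_0$ the two-sided shifted process $(B_{T+t})_{t\in\R}$ should have the law $\BP_\nu$, which by definition of $\BP_\nu$ means that $B_T$ has distribution $\nu$ and $(B_{T+t}-B_T)_{t\in\R}$ is a standard Brownian motion independent of $B_T$. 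These three properties together are exactly what it means for $T$ to be an unbiased shift embedding $\nu$, giving the equivalence in both directions.

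The main obstacle is essentially conceptual rather than computational: it lies in the correct identification of the Palm measures, in particular in checking that the Revuz/Palm formalism applies to the diffuse additive functionals $\ell^0$ and $\ell^\nu$ under $\BP$, so that Theorem \ref{th2} can legitimately be applied. Once that identification is in place, everything else is a matter of unpacking definitions and invoking the result from \cite{LaTh09}.
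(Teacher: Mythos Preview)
Your proposal is correct and follows essentially the same route as the paper: identify the Palm measures of $\ell^0$ and $\ell^\nu$ as $\BP_0$ and $\BP_\nu$ (this is Lemma~\ref{Palmlocal} plus linearity), apply Theorem~\ref{th2} to get the equivalence between balancing and $\BP_0\{\theta_T B\in\cdot\}=\BP_\nu$, and then unpack the latter identity via $A=\{\omega:\omega-\omega_0\in A',\,\omega_0\in C\}$ to recover the independence of $\theta_T B-B_T$ and $B_T$ together with their marginal laws. The paper in fact proves the slightly more general Theorem~\ref{main1a} (with an arbitrary starting distribution $\mu$ in place of $\delta_0$), but the argument is line-for-line the one you describe.
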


For any probability measure $\nu$ on $\R$
we denote by $\BP_\nu:=\int\BP_x \nu(dx)$
the distribution of a two-sided Brownian motion with
a random starting value $B_0$ with law~$\nu$.
We show in Section~\ref{secmassstat} that all these
distributions coincide on the invariant $\sigma$-algebra.
A general result in \cite{Thor96} (see also \cite{Kallenberg}, Theorem 10.28)
then implies that there is a random time~$T$,
% P removed unnnecessary brackets
possibly defined on an extension of $(\Omega,\mathcal{A}, \BP_0)$,
such that $\theta_T B$ has distribution $\BP_\nu$ under~$\BP_0$.
The next two theorems yield a much stronger result.
They show that $T$ can be chosen as a \emph{factor}
of $B$, that is, as a measurable function of $B$;
see \cite{HP05} for a similar result for Poisson processes.
Moreover, this factor is explicitly known. The
proof
is based on Theorem \ref{main1} and on a general
result on the existence of allocation rules
balancing
% P removed "jointly"
stationary orthogonal diffuse random measures
on $\R$ with equal conditional intensities; see Theorem~\ref{main3}
% P added
below.

%th1.3 #&#
\begin{theorem}\label{main2}
Let $\nu$ be a probability measure on $\R$ with $\nu\{0\}=0$.
Then the stopping time
%
%
%e1.8 #&#
\begin{equation}
\label{bertoin} %hermann 08082012:
T^\nu:=\inf \bigl\{t> 0\dvtx
\ell^0[0,t]=\ell^\nu[0,t] \bigr\}
\end{equation}
embeds $\nu$ and is an unbiased shift.
\end{theorem}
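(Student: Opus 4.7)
The plan is to reduce the claim, via Theorem~\ref{main1}, to showing that the allocation rule $\tau_T$ from \eqref{tauT} balances $\ell^0$ and $\ell^\nu$; note that $T\ge 0$ is a stopping time by construction, since $t\mapsto \ell^0[0,t]$ and $t\mapsto \ell^\nu[0,t]$ are continuous and adapted. Exploiting the equivariance \eqref{allocation}, I would first rewrite \eqref{bertoin}, for every $s\in\R$, as
\[
\tau_T(s)=\inf\bigl\{u>s:\ell^0[s,u]=\ell^\nu[s,u]\bigr\}.
\]
Introducing the $\BP$-a.s.\ continuous function $M$ of locally bounded variation, determined up to an additive constant by $M(u)-M(s)=\ell^0[s,u]-\ell^\nu[s,u]$, this is precisely $\tau_T(s)=\inf\{u>s:M(u)=M(s)\}$, the first return of $M$ to its level at $s$.

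Next, I would verify the hypotheses of Theorem~\ref{main3} for the pair $(\ell^0,\ell^\nu)$. Joint stationarity holds under the $\sigma$-finite $\BP$ because both are diffuse additive functionals of $B$. Orthogonality follows from $\nu\{0\}=0$: the measure $\ell^0$ is carried on $B^{-1}(\{0\})$, whereas $\ell^\nu$ is carried on $B^{-1}(\supp\nu)\subset B^{-1}(\R\setminus\{0\})$, and these two sets are $\BP$-a.s.\ disjoint. A short Revuz-measure computation (yielding $\BE[\ell^0[0,1]]=\BE[\ell^\nu[0,1]]=1$ under $\BP$) shows that the conditional intensities coincide. Theorem~\ref{main3} then supplies an allocation rule balancing $\ell^0$ and $\ell^\nu$, and I expect its explicit form to be exactly the first-return prescription above, i.e.\ our $\tau_T$. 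Combining with Theorem~\ref{main1} concludes the argument.

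The main obstacle is the pushforward identity guaranteed by Theorem~\ref{main3}, namely $\int \I\{\tau_T(s)\in A\}\,\ell^0(ds)=\ell^\nu(A)$ for every Borel $A\subset\R$. The heuristic is transparent: $M$ increases only through $\ell^0$-increments (located at zero-times of $B$, where $\ell^\nu$ contributes nothing thanks to $\nu\{0\}=0$) and decreases only through $\ell^\nu$-increments, so the first-return prescription matches each up-increment bijectively with the corresponding down-increment, preserving mass. To make this rigorous I would check that $\tau_T(s)>s$ for $\ell^0$-a.e.\ $s$, i.e.\ that $M$ is strictly right-increasing at $\ell^0$-a.e.\ zero-time of $B$ (a consequence of the orthogonality), and that $\tau_T(s)<\infty$ $\BP$-a.s., which follows from recurrence of $B$ together with the matching of long-run intensities. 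Both are exactly the inputs required by Theorem~\ref{main3}, so no extra work is needed once that theorem is in place.
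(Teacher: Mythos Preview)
Your overall strategy---verify the hypotheses of Theorem~\ref{main3} for the pair $(\ell^0,\ell^\nu)$ and then invoke Theorem~\ref{main1}---is exactly the paper's approach, and your identification of $\tau_T$ with the allocation rule \eqref{matching} is correct. The stationarity, diffuseness, and orthogonality checks are fine.

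There is, however, a genuine gap in the remaining hypothesis check. Theorem~\ref{main3} requires
\[
\BE\big[\ell^0[0,1]\,\big|\,\mathcal{I}\big]=\BE\big[\ell^\nu[0,1]\,\big|\,\mathcal{I}\big]\quad\BP\text{-a.e.},
\]
that is, equality of the \emph{conditional} intensities given the invariant $\sigma$-algebra~$\mathcal{I}$, not merely of the unconditional intensities. Your ``short Revuz-measure computation'' (essentially \eqref{int1}) only delivers $\BE[\ell^0[0,1]]=\BE[\ell^\nu[0,1]]=1$. Passing from this to the conditional statement requires knowing that $\mathcal{I}$ is $\BP$-trivial, and that is precisely the content of Theorem~\ref{H1}. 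This is not a routine fact: the paper's proof of Theorem~\ref{H1} uses the excursion decomposition of $B$ between the inverse local times $T_n$, which in turn relies on Theorem~\ref{littletwin} and the mass-stationarity machinery of Section~\ref{secmassstat}. Without invoking (or independently establishing) such an ergodicity result, the hypotheses of Theorem~\ref{main3} are not verified, and the proof does not close. Your final paragraph, which rehearses the heuristic behind the balancing identity, is redundant once Theorem~\ref{main3} is applicable---but getting to that point is where the missing ingredient lies.
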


The stopping time
%hermann 07082012:
$T^\nu$
was introduced in
\cite{BertJan93} as a solution of the \emph{Skorokhod embedding problem}.
This problem requires finding a stopping time
$T\ge0$ embedding a given distribution $\nu$; see
\cite{Ob04} for a survey.
%added by Guenter 1508 and modified by Peter as dicussed
The idea of using mixtures of local times to solve
this problem was introduced in \cite{monroe72b}.
It has apparently not been noticed
before that
%hermann 07082012:
$T^\nu$
%~\eqref{bertoin}
is an unbiased shift.
%hermann 14082012 expanded this paragraph:
The
% P removed "balancing" - I don't think our mehtods can be reduced to
%just
% one adjective
methods of the present paper are
very different from the methods of~\cite{monroe72b,BertJan93}.

% P rewrote the paragraph below. I hope I understood what we wanted to
%say
% in the old version and managed to say it better.
It is important to note that in the two-sided framework being a
stopping time
is neither necessary nor sufficient for the shifted process to be a Brownian
motion. For instance, this is not the case for another stopping time
introduced in \cite{BertJan93}, which is defined similar to~\eqref
{bertoin} but with
$\nu$ replaced by a finite measure of mass exceeding one; see
Remark~\ref{rsuper2}.
Conversely, unbiased shifts need not be stopping times, even when they
are nonnegative;
see Example~\ref{exnonstop}.\vadjust{\goodbreak}
% old version:
% In one-sided time the stopping time property is a natural requirement
% in order for the post-$T$ process to be a Brownian motion.
% In two-sided time, however, the stopping time property is no
% longer a guarantee for the process shifted by $T$ to be a
% Brownian motion.
% For instance, this is not the case for another stopping time
% introduced in \cite{BertJan93}, which is
% % hermann 23082012 replaced "optimal" by the next line
% an optimal solution in a certain sense
% in one-sided time, see Remark~\ref{rsuper2}.
% On the other hand, the balancing property of Theorem~\ref{th2}
% guarantees that the shifted process will be a two-sided
% Brownian motion. In fact, unbiased shifts need not
% be stopping times, even when they are nonnegative, see Example~

If $\nu$ is of the form $\nu\{0\}\delta_0+(1-\nu\{0\})\mu$
where $\mu\{0\}=0$ and $\nu\{0\}>0$, then Theorem~\ref{main2}
does not apply. In fact, if $\nu\{0\}<1$, then
%hermann 07082012:
$T^\nu$
% \eqref{bertoin}
is an unbiased
shift embedding~$\mu$. Still we can use Theorem~\ref{main2}
to construct unbiased shifts without any assumptions
on $\nu$:

%th1.4 #&#
\begin{theorem}\label{main2a}
Let $\nu$ be a probability measure on $\R$.
Then there exists a nonnegative stopping time that
is an unbiased shift embedding $\nu$.
\end{theorem}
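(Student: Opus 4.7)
The plan is to reduce to Theorem~\ref{main2} by detouring through an auxiliary distribution that puts no mass on~$0$. Set $p:=\nu\{0\}$. If $p=0$, apply Theorem~\ref{main2} directly; if $p=1$, take $T\equiv 0$. In the remaining case $0<p<1$, write $\nu=p\delta_0+(1-p)\mu$ with $\mu\{0\}=0$, and pick $a\in\R\setminus\{0\}$ with $\mu\{a\}=0$, which exists because $\mu$ has at most countably many atoms. Set $\tilde\nu:=p\delta_a+(1-p)\mu$, so $\tilde\nu\{0\}=0$.

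First I would apply Theorem~\ref{main2} to $\tilde\nu$, producing a nonnegative stopping time $S$ that is an unbiased shift embedding $\tilde\nu$. Then $B^{(1)}:=(B_{S+t}-B_S)_{t\in\R}$ is a standard two-sided Brownian motion independent of $B_S$. A second application of Theorem~\ref{main2}, now to the Brownian motion $B^{(1)}$ with target $\delta_{-a}$ (permissible since $-a\ne 0$), yields a nonnegative stopping time $R=R(B^{(1)})$ that is an unbiased shift of $B^{(1)}$ and satisfies $B^{(1)}_R=-a$. Define
\begin{align*}
T\,:=\,S+R(B^{(1)})\,\I\{B_S=a\},
\end{align*}
which is evidently a nonnegative stopping time and a factor of~$B$. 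On $\{B_S\ne a\}$ one has $T=S$ and $B_T=B_S$ with conditional law $(1-p)\mu$; on $\{B_S=a\}$ one has $B_T=a+B^{(1)}_R=0$. Thus $B_T\sim\nu$.

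For the unbiased shift property I would test $\BE_0[F(B_{T+\cdot}-B_T)\,g(B_T)]$ against bounded measurable $F\colon\Omega\to\R$ and $g\colon\R\to\R$, split along $\{B_S\ne a\}$ versus $\{B_S=a\}$. On the former event the integrand collapses to $F(B^{(1)})\,g(B_S)$ and the unbiased shift property of~$S$ yields the contribution $\BE_0[F(B)](1-p)\int g\,d\mu$; on the latter event the integrand equals $F(B^{(1)}_{R+\cdot}-B^{(1)}_R)\,g(0)$, and the independence of $B^{(1)}$ from $B_S$ combined with the unbiased shift property of $R$ relative to $B^{(1)}$ yields $\BE_0[F(B)]\,p\,g(0)$. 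The two contributions sum to $\BE_0[F(B)]\int g\,d\nu$, which simultaneously certifies that $(B_{T+t}-B_T)_{t\in\R}$ has the law of Brownian motion and that it is independent of~$B_T$.

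The only genuine obstacle is the failure of Theorem~\ref{main2} when the target has an atom at~$0$: the fix is to displace that atom to a point $a\ne 0$ avoided by $\mu$, embed the resulting atom-free distribution $\tilde\nu$, and then push the displaced mass back to~$0$ by a second application of Theorem~\ref{main2} to $\delta_{-a}$ on the post-shift Brownian motion. Once the construction is set up, verifying that the two unbiased shifts compose into a single unbiased shift is a routine check powered by the independence of $B^{(1)}$ from $B_S$.
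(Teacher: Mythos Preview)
Your proposal is correct and takes essentially the same approach as the paper: displace the atom at $0$ to an auxiliary point $a$ (the paper's $y$), embed the resulting atom-free distribution via Theorem~\ref{main2}, and on the event of landing at $a$ compose with a second application of Theorem~\ref{main2} to return to level~$0$. The only cosmetic difference is that the paper expresses the composition at the level of allocation rules and then invokes Theorem~\ref{main1}, whereas you verify the unbiased-shift property directly by splitting the expectation over $\{B_S=a\}$ and its complement.
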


In Theorem~\ref{littletwin} we have $\BP_0\{B_{T_0}=0,T_0= 0\}=1$
and $\BP_0\{B_{T_r} =0, T_r \neq 0\}=1$ if $r\ne0$.
It is interesting to note that unbiased shifts $T$ (even if they are
not stopping times)
are almost surely nonzero as long
as the condition $\BP_0\{B_T=0\}<1$ is fulfilled:

%th1.5 #&#
\begin{theorem}\label{prop3} Let $\nu$ be a probability
measure on $\R$ such that $\nu\{0\}<1$. Then any unbiased shift $T$
embedding $\nu$ satisfies
%
%
%e1.9 #&#
\begin{equation}
\label{21} \BP_0\{T=0\}=0.
\end{equation}
\end{theorem}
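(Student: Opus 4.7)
My plan is to use only the defining property of an unbiased shift, without invoking the allocation-rule characterisation of Theorem~\ref{main1}. The key observation is that on the event $\{T=0\}$ two things happen simultaneously under $\BP_0$: since $B_0=0$ we have $B_T=0$, giving the inclusion $\{T=0\}\subseteq\{B_T=0\}$; and $\theta_T B=B$, which combined with $B_T=0$ yields the pointwise identity $B=\theta_T B-B_T$ on $\{T=0\}$.

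With these two facts in hand, I would estimate, for an arbitrary Borel set $D\subseteq\Omega$,
\begin{align*}
\BP_0(\{T=0\}\cap D)
&=\BE_0\bigl[\I\{T=0\}\,\I_D(\theta_T B-B_T)\bigr]\\
&\le \BE_0\bigl[\I\{B_T=0\}\,\I_D(\theta_T B-B_T)\bigr]
=\nu\{0\}\,\BP_0(D),
\end{align*}
where the first equality uses the identity on $\{T=0\}$, the inequality uses $\{T=0\}\subseteq\{B_T=0\}$, and the last equality invokes the definition of an unbiased shift: under $\BP_0$, $B_T$ has law $\nu$ and is independent of the standard Brownian motion $\theta_T B-B_T\sim\BP_0$.

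Specialising to $D=\{T=0\}$ then gives
$$\BP_0(T=0)\le\nu\{0\}\,\BP_0(T=0),$$
and since $\nu\{0\}<1$ by hypothesis, we conclude $\BP_0(T=0)=0$.

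The small but essential trick is to use the event $\{T=0\}$ twice — once as the restricting event, once as the test set $D$ — which is what upgrades the naive estimate $\BP_0(T=0)\le\nu\{0\}$ into the sharp conclusion. I expect no serious obstacle: the argument flows directly from the definition of an unbiased shift and requires no sign or regularity hypothesis on $T$ beyond measurability.
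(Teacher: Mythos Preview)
Your argument is correct. The identification $B=\theta_T B-B_T$ on $\{T=0\}$ is the essential observation, and feeding $D=\{T=0\}$ back into your inequality $\BP_0(\{T=0\}\cap D)\le\nu\{0\}\,\BP_0(D)$ closes the loop cleanly. Since $T$ is by assumption a measurable function of $B$ on the canonical space, the event $\{T=0\}$ is indeed in $\mathcal{A}$ and is a legitimate choice for $D$.

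Your route is genuinely different from the paper's. The paper first passes through Theorem~\ref{main1} to convert the unbiased-shift hypothesis into the balancing property of the allocation rule $\tau=\tau_T$, and then argues at the level of local times: it shows that $\int\I\{s\in C,\tau(s)=s\}\,\ell^0(ds)\le\nu\{0\}\,\ell^0(C)$ for every Borel $C$, whence $\tau(s)\ne s$ for $\ell^0$-a.e.\ $s$, $\BP$-a.e., and finally invokes Lemma~\ref{Palmlocal} (that $\BP_0$ is the Palm measure of $\ell^0$) to transfer this to $\BP_0\{T=0\}=0$. Your proof bypasses the allocation-rule characterisation, local times, and Palm calculus entirely; it uses nothing beyond the defining independence and distributional properties of an unbiased shift. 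What the paper's route buys is structural consistency --- the same computation scheme that yields Theorems~\ref{main2a} and \ref{p11} also yields this result --- while what your route buys is economy: the statement holds for the reason one would naively guess, and your proof makes that transparent.
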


In contrast to the previous
theorem, if $T$ is an unbiased shift with\break
\mbox{$\BP_0\{B_T=0\}=1$}, then the probability  $\BP_0\{T=0\}$ may take
any value:

%th1.6 #&#
\begin{theorem}\label{p11} For any $p\in[0,1]$ there is an unbiased
shift $T\ge0$ embedding $\delta_0$ and such that $\BP_0\{T=0\}=p$.
\end{theorem}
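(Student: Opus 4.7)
My plan is to invoke Theorem~\ref{main1} with $\nu=\delta_0$: since $\ell^{\delta_0}=\ell^0$, it suffices to produce an equivariant allocation rule $\tau$ that balances $\ell^0$ with itself, and then to set $T:=\tau(0)$. The equivariance gives $\tau_T=\tau$, and Theorem~\ref{main1} then yields that $T$ is an unbiased shift embedding $\delta_0$. The remaining requirements are $T\ge 0$ and $\BP_0\{T=0\}=p$.

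Choose $c\in[-\infty,+\infty]$ with $\BP_0\{B_1>c\}=p$, which is possible since $B_1\sim N(0,1)$ under $\BP_0$. Mark each zero $s$ of $B$ according to the value of $B_{s+1}$ and split $\ell^0=\xi+\xi^c$ via
\begin{align*}
\xi(ds):=\I\{B_{s+1}>c\}\,\ell^0(ds),\qquad \xi^c(ds):=\I\{B_{s+1}\le c\}\,\ell^0(ds),
\end{align*}
both of which are stationary diffuse factors of $B$, of intensities $p$ and $1-p$ under $\BP$. Define
\begin{align*}
\tau(\omega,s):=\begin{cases} s, & B_{s+1}(\omega)>c,\\ \inf\bigl\{u>s:\xi^c(\omega,(s,u])\ge 1\bigr\}, & B_{s+1}(\omega)\le c,\end{cases}
\end{align*}
so $\tau$ fixes every ``marked'' zero and shifts every other zero forward by exactly one unit of $\xi^c$-mass. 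Equivariance of $\tau$ follows from the covariance identity $\xi^c(\theta_t\omega,A)=\xi^c(\omega,A+t)$ and the analogous identity for $\xi$.

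The key verification is that $\tau$ balances $\ell^0$ with itself. Decomposing
\begin{align*}
\int\I_C(\tau(s))\,\ell^0(ds)=\int\I_C(\tau(s))\,\xi(ds)+\int\I_C(\tau(s))\,\xi^c(ds),
\end{align*}
the first term equals $\xi(C)$ since $\tau$ is the identity on $\supp\xi$, and the second equals $\xi^c(C)$ by the standard Palm-theoretic fact that the shift by one unit of mass is measure-preserving for any diffuse stationary random measure of positive intensity; summing gives $\ell^0(C)$. Setting $T:=\tau(0)$ we have $T=0$ on $\{B_1>c\}$ and $T=\inf\{u>0:\xi^c((0,u])\ge 1\}>0$ on the complement, so $T\ge 0$, $B_T=0$, and $\BP_0\{T=0\}=\BP_0\{B_1>c\}=p$. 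Theorem~\ref{main1} then gives that $T$ is an unbiased shift embedding $\delta_0$. The main technical point is the mass-preservation of the unit shift on $\xi^c$ (classical, but needing the diffuse stationary structure of $\xi^c$); the endpoints $p=0$ and $p=1$ degenerate to $T=T_1$ and $T=0$, as they must.
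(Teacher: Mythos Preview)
Your proof is correct and follows the same structural template as the paper's argument: split $\ell^0=\xi+\xi^c$ into two invariant pieces of intensities $p$ and $1-p$, let $\tau$ act as the identity on the support of $\xi$, and on the support of $\xi^c$ let $\tau$ be the forward shift by one unit of $\xi^c$-mass (which balances $\xi^c$ with itself by the computation~\eqref{867} in the proof of Theorem~\ref{mass-stat}, once one knows $\xi^c$ has infinite mass on both half-lines $\BP$-a.e.---this follows from Theorem~\ref{H1}). The two proofs differ only in the choice of marking. The paper marks a zero $s$ according to the value $B_{\tau'(s)}\in\{1,2\}$, where $\tau'$ is the Bertoin--Le~Jan allocation balancing $\ell^0$ and $\ell^{p\delta_1+(1-p)\delta_2}$; this has the advantage of tying the construction to Theorem~\ref{main2}, but requires that nontrivial result as input. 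Your marking $\{B_{s+1}>c\}$ is more elementary and self-contained, needing only the refined Campbell theorem~\eqref{refCy} to verify the intensity $\BE\,\xi[0,1]=\BP_0\{B_1>c\}=p$. Neither construction yields a stopping time, since both markings look into the future of~$B$.
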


A solution $T$ of the
Skorokhod embedding problem is usually required to have
good moment properties, but some restrictions apply. For instance,
if the target distribution $\nu$ is not centered,
by~\cite{MortersPeres}, Theorem 2.50,
we must have $\BE_0\sqrt{T}=\infty$. If the embedding stopping time
is also an unbiased shift the situation is worse, even
when $\nu$ is centred.

%th1.7 #&#
\begin{theorem}\label{t14complete}
Suppose $\nu$ is a target distribution with $\nu\{0\}=0$,
and the stopping time $T\geq0$ is an
unbiased shift embedding~$\nu$. Then
\[
\BE_0 T^{1/4}=\infty.
\]
If $\nu$ additionally satisfies
$\int|x| \nu(dx)<\infty$, the unbiased shift
%hermann 07082012:
$T=T^\nu$
%constructed in~\eqref{bertoin}
satisfies
% P there is a small clash in notation in the meaning of the superindex
% of T here but I don't think this is confusing.
%
\[
\BE_0 T^{\beta}<\infty\qquad\mbox{for all $\beta<1/4$.}
\]
\end{theorem}

Dropping the stopping time
assumption, we show in Theorem~\ref{pmoment} that
$\BE_0\sqrt{|T|}=\infty$ for any unbiased shift $T$ embedding a
target distribution $\nu$ with $\nu\{0\}<1$.
If the target distribution is concentrated
at zero, and $T$ is nonnegative but not identically zero,
we show in Theorem~\ref{posmoment} that
$\BE_0 T=\infty$. Nonnegativity is important in this result;
Example~\ref{exexpon} provides an unbiased shift
with $\BP_0\{T\ne0\}=1$ that has exponential moments.

Theorem \ref{pminimal} further shows that,
in addition to the nearly optimal moment properties stated above,
the stopping times
%hermann 07082012:
$T^\nu$
defined in~\eqref{bertoin} are also \emph{minimal}
in a sense analogous to the definition in \cite{monroe72}; see also
\cite{CH}, or \cite{Ob04} for a survey.
This means that if $S\ge0$ is another unbiased shift
embedding $\nu$ such that $\BP_0\{S\le T\}=1$, then $\BP_0\{S=T\}=1$.
Our discussion of minimality is based on a notion of stability of
allocation rules, which is similar to the one studied in \cite{HPPS09}.

%hermann 07082012:
% P extended:
The results for Brownian motion stated above will be developed in a
general framework,
which goes much beyond the Brownian setting; see Sections~\ref{secmassstat} and~\ref{secexistence}. They
are heavily reliant on
the general Palm theory from~\cite{LaTh09}. The most important results,
which are also
of independent interest, are Theorem~\ref{mass-stat}, characterising
mass-stationarity,
and Theorem~\ref{main3}, formulating general conditions on the
existence of balancing
allocation rules.

%hermann 06082012:
The structure of the paper is as follows. Section~\ref{secPalm}
presents essential
background on Palm measures and local time.
Section~\ref{secmassstat}
establishes a general result on mass-stationarity
for diffuse random measures on the line, %hermann 07082012: added next
%line
Theorem~\ref{mass-stat},
implying
a result (Theorem~\ref{littletwingen}) containing Theorem~\ref
{littletwin} as a special case.
Section~\ref{sexinv} proves a result (Theorem~\ref{main1a}) containing
Theorem~\ref{main1} as a special case.
Section~\ref{secexistence} presents
the key general result on
balancing diffuse %hermann 07082012: removed "jointly stationary"
random measures,
Theorem \ref{main3},
implying
a result (Theorem~\ref{main2general}) containing Theorem~\ref{main2} as
a special case.
Section~\ref{seczero}
%hermann 23082012 replaced "contains the proofs of" by the next word
proves
Theorems~\ref{main2a}, \ref{prop3}
and \ref{p11}.
In Sections~\ref{secstability} and \ref{secproperties} we establish
minimality and moment properties of unbiased shifts,
including Theorem~\ref{t14complete}.
Section~\ref{secremarks} finally
discusses extensions of the central results above
% P inserted
from Brownian motion to
%Guenter 1408
a more general class of L\'evy processes.

%s2 #&#
\section{Preliminaries on Palm measures and local times}\label{secPalm}

%hermann 06082012:
In order to present and develop some Palm theory on which the
results of this paper rely, %hermann 06082012: removed "heavily"
we need a framework more general
than the Brownian setting in the \hyperref[sintro]{Introduction}.
Consider a $\sigma$-finite measure space
$(\Omega,\mathcal{A},\BP)$ equipped with a \emph{flow}
$\{\theta_t\dvtx t\in\R\}$ of measurable bijections
$\theta_t\dvtx\Omega\rightarrow\Omega$ such that
$(\omega,t)\mapsto\theta_t(\omega)$ is measurable,
$\theta_0$ is the identity on~$\Omega$ and
$\theta_{s+t}=\theta_s\circ\theta_t$ for all $s,t\in\R$.
We assume that $\BP$ is \emph{stationary}, that is,
%
%
%e2.1 #&#
\begin{equation}
\label{Pinv} \BP=\BP\circ\theta_s,\qquad s\in\R.
\end{equation}
By the \emph{stationary Brownian case} we mean the important example when
$\Omega$ is the class of all continuous functions $\omega\dvtx\R
\rightarrow\R$
with the flow given by \eqref{shiftB},
$\mathcal{A}$ is the Kolmogorov product $\sigma$-algebra,
and the measure $\BP$ is
given by \eqref{statP}.
We use the term \emph{Brownian case} when the stationary $\BP$
is (possibly) replaced by other Brownian measures like $\BP_0$ and $\BP_x$.
In the Brownian case we let $B=(B_t)_{t\in\R}$ denote the
identity on $\Omega$.
Since $\BP\{B_0\in C\}<\infty$ for any compact $C\subset\R$,
the measure $\BP$ is indeed $\sigma$-finite, and the
proof of \eqref{Pinv} is
based
on the stationary increments of $B$; see~\cite{Zaehle91}.
Corollary~\ref{timeP} % P added
below
provides an alternative definition of $\BP$.
More general
L\'evy processes will be discussed in Section~\ref{secremarks}.

Random measures and (balancing) allocation rules are
defined as in Section~\ref{sintro}.
A random measure $\xi$ is called \emph{invariant} if
%
%
%e2.2 #&#
\begin{equation}
\label{xicov} \xi(\theta_t\omega,C-t)=\xi(\omega,C),\qquad C\in
\mathcal{B},t\in\R, \BP\mbox{-a.s.},
\end{equation}
where $\mathcal{B}$ is the Borel $\sigma$-algebra on $\R$.
%(Because of \eqref{Pinv} the distribution of $\BP(\xi\in\cdot)$
%is then stationary in the obvious sense.)
In this case the \emph{Palm measure} $\BQ_\xi$ of $\xi$
(with respect to $\BP$) is defined by
%
%
%e2.3 #&#
\begin{equation}
\label{defPalm} \BQ_\xi(A):=\BE_\BP\int
\I_{[0,1]}(s)\I_A(\theta_s) \xi(ds),\qquad A\in
\mathcal{A}.
\end{equation}
This is a $\sigma$-finite measure on $(\Omega,\mathcal{A})$.
If the \emph{intensity} $\BQ_\xi(\Omega)$ of $\xi$ is
positive and finite, $\BQ_\xi$ can be normalised to
yield the Palm probability measure of $\xi$.
This measure can be interpreted as the conditional distribution
(with respect to $\BP$) given that the origin $0\in\R$
is a \emph{typical point} in the mass of $\xi$;
see \cite{Kallenberg}, Chapter~11, for some fundamental properties
of Palm probability measures.
The invariance property \eqref{xicov}
implies the \emph{refined Campbell theorem}
%
%
%e2.4 #&#
\begin{equation}
\label{refC} \BE_\BP\int f(\theta_s,s) \xi(ds)=
\BE_{\BQ_\xi} \int f(\theta_0,s) \,ds
\end{equation}
for any measurable $f\dvtx\Omega\times\R\rightarrow[0,\infty)$
%hermann 06082012, P modified
where, as in~\eqref{defPalm}, $\BE_{\BQ}$ denotes integration with
respect to
the measure~$\BQ$. The relevance of Palm measures for this paper stems
from the following result in~\cite{LaTh09}.

%th2.1 #&#
\begin{theorem}\label{th2}
Consider two invariant random measures $\xi$ and $\eta$ on $\R$ and
an allocation rule $\tau$. Then $\tau$ balances $\xi$ and $\eta$
if and only if
\[
\BQ_\xi\{\theta_{\tau}B\in\cdot\}=\BQ_\eta,
\]
where $\theta_{\tau}\dvtx\Omega\to\Omega$ is defined by
$\theta_{\tau}(\omega):=\theta_{{\tau}(\omega,0)}\omega$, $\omega\in
\Omega$.
\end{theorem}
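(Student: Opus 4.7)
The plan is to deduce both directions from the refined Campbell formula \eqref{refC}, combined with the fact that an invariant $\sigma$-finite random measure is uniquely determined by its Palm measure under the stationary $\BP$.

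The algebraic workhorse will be the equivariance identity $\tau(\omega,t) = \tau(\theta_t\omega, 0) + t$, which is just \eqref{allocation} rewritten with shifted arguments, together with its operator-level consequence $\theta_{\tau(\omega,t)}\omega = \theta_{\tau(\theta_t\omega,0)}(\theta_t\omega)$. This identity is precisely what turns an integrand of the form $\I_{[0,1]}(s)\I_A(\theta_s B)$ against $\eta$ into one of the canonical form $F(\theta_t B, t)$ against $\xi$ that \eqref{refC} is designed to handle.

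For the forward direction, I would start from the definition of $\BQ_\eta(A)$, use the balancing relation $\eta = \int \I\{\tau(t)\in\cdot\}\,\xi(dt)$ to replace the outer integral against $\eta$ by one against $\xi$, and then recast the integrand as $F(\theta_t B, t)$ with $F(\omega,t) := \I_{[0,1]}(\tau(\omega,0)+t)\,\I_A(\theta_\tau\omega)$. A single application of \eqref{refC}, followed by the trivial observation that $\int \I_{[0,1]}(\tau(\omega,0)+t)\,dt = 1$ for every $\omega$, then collapses the expression to $\BQ_\xi\{\theta_\tau B\in A\}$.

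For the converse, I would define the candidate $\tilde\eta := \int \I\{\tau(s)\in\cdot\}\,\xi(ds)$, verify that it is itself an invariant random measure by a change of variables $s \mapsto s + t$ that combines \eqref{xicov} for $\xi$ with the equivariance of $\tau$, and then apply the forward direction to $\tilde\eta$ in place of $\eta$. This yields $\BQ_{\tilde\eta} = \BQ_\xi\{\theta_\tau B \in\cdot\} = \BQ_\eta$, at which point a uniqueness theorem for Palm measures of invariant random measures forces $\tilde\eta = \eta$ $\BP$-a.s., which is the balancing relation.

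The main obstacle will be the bookkeeping forced on us by the $\sigma$-finiteness of $\BP$: both the Fubini exchanges in the forward computation and the concluding uniqueness step need to be justified with care, and the ``$\BP$-a.e.'' clauses in \eqref{allocation} and \eqref{xicov} must be merged on a common $\BP$-conull set before any pathwise manipulation is carried out. Beyond these technicalities, however, the whole argument reduces to a single application of the refined Campbell formula together with the equivariance identity.
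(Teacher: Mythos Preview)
The paper does not actually prove this theorem: it is stated in Section~\ref{secPalm} with the words ``We recall the following result from \cite{LaTh09}'', and no proof is given. So there is no in-paper argument to compare against.

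Your proposal is correct and is essentially the argument found in the cited reference. The forward direction is exactly the intended computation: rewrite $\BQ_\eta(A)$ using the balancing relation, use equivariance to cast the integrand as $F(\theta_t B,t)$, and apply the refined Campbell theorem \eqref{refC}; the integral $\int \I_{[0,1]}(\tau(\omega,0)+t)\,dt=1$ then collapses everything. For the converse, defining $\tilde\eta$ as the $\tau$-pushforward of $\xi$, checking its invariance, and invoking the injectivity of the Palm correspondence (Mecke's inversion, also cited in the paper as \cite{Mecke}) is precisely the route taken in \cite{LaTh09}. One small point you should make explicit in the converse: before applying the forward direction to $\tilde\eta$, you need $\tilde\eta$ to be a random measure in the paper's sense (locally finite $\BP$-a.e.), which is not automatic from the definition as a pushforward; in the original reference this is handled by first establishing an identity at the level of Campbell measures, from which both local finiteness and the Palm identity follow simultaneously.
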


In the remainder of this section we consider the Brownian case.
Recall that $\ell^x$ is the random measure
associated with the \emph{local time} of $B$
at $x\in\R$ (under $\BP_0$). This means that
%
%
%e2.5 #&#
\begin{equation}
\label{occupation} \int f(B_s,s) \,ds=\iint f(x,s)
\ell^x(ds) \,dx, \qquad\BP_0\mbox{-a.s.}
\end{equation}
for all measurable $f\dvtx\R^2\rightarrow[0,\infty)$.
The global construction in \cite{Perkins81}
(see also \cite{Kallenberg}, Proposition 22.12
and \cite{MortersPeres}, Theorem~6.43)
guarantees the existence of a version of local times
with the following properties.
The random measure $\ell^0$ is $\BP_x$-a.e. diffuse for
any $x\in\R$ and
%
%
%e2.6 #&#
%e2.7 #&#
%e2.8 #&#
\begin{eqnarray}
\label{0inv} \ell^0(\theta_t\omega,C-t)&=&
\ell^0(\omega,C), \qquad C\in\mathcal{B}, t\in\R, \BP_x
\mbox{-a.s.}, x\in\R,
\\
\label{0x} \ell^y(\omega,\cdot)&=&\ell^0(\omega-y,\cdot),\qquad
\omega\in\Omega, y\in\R,
\\
\label{0y} \int\I\{B_t\ne x\} \ell^x(\omega, dt) &=& 0,\qquad
\omega\in\Omega, x\in\R.
\end{eqnarray}
%
% P replaced \equiv with =
Equation \eqref{0x} implies that $\ell^y$ is $\BP_x$-a.e. diffuse for
any $x\in\R$ and is invariant in the sense of \eqref{0inv}.
From Fubini's theorem we infer that these properties
do also hold for the random measure $\ell^\nu$ defined by \eqref{ellnu}.

%hermann 14082012 turned next paragraph into remark:
%
%re2.2 #&#
\begin{remark}\label{addfunc}
Invariant random measures of the form \eqref{ellnu} are closely related
to \emph{continuous additive functionals} of Brownian motion;
see, for example, \cite{Kallenberg}, Chapter~22.
Indeed, if $\xi$ is an invariant random measure,
then the process $A_t:=\xi[0,t]$, $t\ge0$, is additive in the sense
that $A_{s+t}=A_s+A_t\circ\theta_s$ for all $s,t\ge0$
($\BP$-a.s.). Conversely, if $(A_t)_{t\ge0}$ is additive
and continuous ($\BP_x$-a.s. for all $x\in\R$), and if $A_t$ depends only
on the restriction of $B$ to the interval $[0,t]$,
then \cite{Kallenberg}, Chapter~22, implies that there is
a locally finite measure $\nu$, the \emph{Revuz measure} of $(A_t)$, such
that $(A_t)_{t\ge0}=(\ell^\nu[0,t])_{t\ge0}$ $\BP_x$-a.s. for all
$x\in\R$.
\end{remark}

The following result is essentially from
\cite{GeHo73}; see also \cite{Zaehle91}. Combined with Theorem \ref{th2}
it will yield a short proof of Theorem \ref{main1}; see Section~\ref{sexinv}.

%le2.3 #&#
\begin{lemma}\label{Palmlocal} Let $y\in\R$. Then
$\BP_y$ is the Palm measure of $\ell^y$.
\end{lemma}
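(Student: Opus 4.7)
The plan is to compute $\BQ_{\ell^y}(A)$ directly from its definition and reduce it to $\BP_y(A)$ using only the occupation density formula \eqref{occupation}, the identity \eqref{0x}, and the stationary increments of Brownian motion. Starting from
$$
\BQ_{\ell^y}(A)=\BE\int\I_{[0,1]}(s)\,\I_A(\theta_sB)\,\ell^y(ds),
$$
I would first disintegrate $\BE=\int\BE_x\,dx$ via \eqref{statP}. Under $\BP_x$ the canonical process has the law of standard Brownian motion translated by $x$, so $\theta_sB$ becomes $\theta_sB+x$ and, by \eqref{0x}, $\ell^y(B,\cdot)$ becomes $\ell^{y-x}(B,\cdot)$, both expressions now taken under $\BP_0$. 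The substitution $u=y-x$ then gives
$$
\BQ_{\ell^y}(A)=\BE_0\iint\I_{[0,1]}(s)\,\I_A(\theta_sB+y-u)\,\ell^u(ds)\,du.
$$

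The crucial step is to apply the occupation density formula \eqref{occupation} $\BP_0$-path-by-path to the nonnegative integrand $f(u,s):=\I_{[0,1]}(s)\,\I_A(\theta_sB+y-u)$, viewed as a measurable function of $(u,s)$. Since $\iint f(u,s)\,\ell^u(ds)\,du=\int f(B_s,s)\,ds$ for almost every realization of $B$, the double integral collapses and leaves
$$
\BQ_{\ell^y}(A)=\BE_0\int_0^1\I_A(\theta_sB-B_s+y)\,ds.
$$
For each fixed $s$ the recentred process $(B_{s+t}-B_s+y)_{t\in\R}$ has law $\BP_y$ by the stationary increments of two-sided Brownian motion, hence the integrand equals $\BP_y(A)$, and integrating over $s\in[0,1]$ yields $\BQ_{\ell^y}(A)=\BP_y(A)$.

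The only mildly delicate point is justifying the interchange of the $\BE_0$, $du$ and $\ell^u(ds)$ integrations and applying \eqref{occupation} to an integrand with genuine $\omega$-dependence through $\theta_sB$; this is a routine Fubini/monotone class argument using nonnegativity of the integrand and joint measurability of $(\omega,s)\mapsto\theta_sB(\omega)$. I do not expect any real obstacle here, and essentially the same computation is the content of \cite{GeHo73,Zaehle91}, so the write-up can be kept short.
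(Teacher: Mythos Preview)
Your proof is correct and follows essentially the same route as the paper's: disintegrate $\BE$ via \eqref{statP}, use \eqref{0x} to shift the local time level, change variables, apply the occupation density formula \eqref{occupation}, and finish with the stationary-increment identity $\BP_0\{\theta_sB-B_s+y\in\cdot\}=\BP_y$. The only cosmetic difference is that the paper carries a general test function $f(\theta_sB,s)$ through the computation and thereby records the refined Campbell theorem \eqref{refCy} as a by-product, whereas you specialize to $f=\I_{[0,1]}\otimes\I_A$ from the outset; this changes nothing in the argument.
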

\begin{pf}
Let $f\dvtx\Omega\times\R\rightarrow[0,\infty)$
be measurable. By definition \eqref{statP} of $\BP$
we have
\[
\BE_\BP\int f(\theta_sB,s) \ell^y(B,ds) =
\int\BE_0\int f(\theta_sB+x,s) \ell^y(B+x,ds)
\,dx.
\]
By \eqref{0x} this equals
\[
\int\BE_0\int f(\theta_sB+x,s) \ell^{y-x}(B,ds)
\,dx =\BE_0\iint f(\theta_sB-x+y,s) \ell^{x}(B,ds)
\,dx,
\]
where the equality comes from a change of variables
and Fubini's theorem. By~\eqref{occupation} this equals
\[
\BE_0\int f(\theta_sB-B_s+y,s) \,ds.
\]
Since $\BP_0\{\theta_sB-B_s+y\in\cdot\}=\BP_y$,
we obtain
%
%
%e2.9 #&#
\begin{equation}
\label{refCy} \BE_\BP\int f(\theta_sB,s)
\ell^y(B,ds) =\BE_y \int f(B,s) \,ds
\end{equation}
and hence the assertion.
\end{pf}

Equation~\eqref{refCy} is the refined Campbell theorem
\eqref{refC} in the case $\xi=\ell^y$.
In particular, it implies that $\ell^y$ has intensity $1$,
%
%
%e2.10 #&#
\begin{equation}
\label{int1} \BE_\BP\ell^y\bigl([0,1]\bigr)=1.
\end{equation}

%s3 #&#
\section{Mass-stationarity}\label{secmassstat}

In this section %we prove Theorem \ref{littletwin}.
we show that the
%hermann 06082012: replaced invariance by stationarity
%hermann 07082012: removed "stationarity"
property
in Theorem~\ref{littletwin} characterises
mass-stationarity (defined below) not only of
local times of Brownian motion but
of general diffuse random measures on the line.
As in Section~\ref{secPalm}
we consider a measurable space $(\Omega,\mathcal{A})$,
equipped with a flow $\{\theta_t\dvtx t\in\R\}$.
We consider a $\sigma$-finite measure $\BQ$ on
$(\Omega,\mathcal{A})$ but do not assume that $\BQ$
is stationary. The key example in the Brownian case is $\BQ=\BP_x$.

Let $\xi$ be a diffuse and invariant random measure on $\R$
[\eqref{xicov} is assumed to hold $\BQ$-almost everywhere],
and let $\lambda$ denote Lebesgue measure on $\R$.
Then $\xi$ is called \emph{mass-stationary} if,
for all bounded Borel subsets $C$ of $\R$ with $\lambda(C)>0$
and $\lambda(\partial C)=0$
and all measurable functions $f\dvtx\Omega\times\R\rightarrow[0,\infty)$,
%
%
%e3.1 #&#
\begin{equation}
\label{mass-stat-def}\qquad \BE_\BQ\iint\I_C(u)
\frac{\I_{C-u}(s)}{\xi(C-u)} f(\theta_s, s + u) \xi(ds) \,du = \BE_\BQ
\int\I_C(u) f(\theta_0, u) \,du,
\end{equation}
%z\qquad
using the convention that any integration over a set of measure zero
yields zero.
% P replaced
% "Here we set $1/\xi(C-u):=0$ whenever $\xi(C-u)=0$."
% This looks dodgy and is unnecessary, as the problem division by
% zero occurs on a set of measure zero, and the standard convetions
% of measure and integration are saving us already.
Mass-stationarity is a formalisation
of the intuitive idea that the origin is
a typical location in the mass of a random measure.

%hermann 06082012: removed "invariance"
Property \eqref{mass-stat-def}
can be interpreted probabilistically as saying that, if the set $C$ is
placed uniformly at random around the origin and the origin shifted to a
location chosen according to the mass distribution of $\xi$ in
% P replaced that by this
this
randomly placed set, then
the distribution of $\xi$ does not change.

The
%hermann 06082012: removed "invariance"
property \eqref{gen-inv} in the following theorem
is a new characterisation of mass-stationarity.
It is similar to the well-known characterisation
% P inserted
by cycle-stationarity
in the simple point process case (see, e.g., \cite{Kallenberg}, Theorem 11.4)
and is certainly more transparent than \eqref{mass-stat-def}.
It is, however, restricted to the diffuse case on the line
while \eqref{mass-stat-def} works for general random measures
in a group setting.
The
% P replaced result by formula
formula~\eqref{Q*} below is also new, but the
equivalence of mass-stationarity and Palm measure
% P added
property
was established in \cite{LaTh09} for Abelian groups
and in \cite{Last2010} for general locally compact groups.

%th3.1 #&#
\begin{theorem}\label{mass-stat} Assume that
$\BQ\{\xi(-\infty,0)<\infty\}=\BQ\{\xi(0,\infty)<\infty\}=0$, and
let $S_r$, $r \in\R$, be the
generalised inverse of the diffuse random measure~$\xi$
defined as in~\eqref{Tr}. Then
%
%
%e3.2 #&#
\begin{equation}
\label{gen-inv} %\mbox{$\theta_{S_r}(X, \xi)$ has the same distribution as $(X, \xi)$
%for all $r \in\R$}
% P changed bracket type
\BQ \{
\theta_{S_r}\in\cdot \}=\BQ,\qquad r\in\R,
\end{equation}
if and only if $\xi$
is mass-stationary and if and only if $\BQ$
is the Palm measure of $\xi$ with respect to
a $\sigma$-finite stationary measure $\BP$.
The measure $\BP$ is uniquely determined by $\BQ$ as follows:
for each $w>0$ and each measurable function
$f\dvtx\Omega\rightarrow[0,\infty)$,
%
%
%e3.3 #&#
\begin{equation}
\label{Q*} \BE_{\BP} f=w^{-1}\BE_{\BQ}\int
_0^{S_w} f\circ\theta_s \,ds.
\end{equation}
\end{theorem}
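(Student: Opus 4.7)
\medskip
\noindent\textbf{Proof plan for Theorem \ref{mass-stat}.}

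The plan is to prove the chain of implications
\[
\text{Palm of stationary }\BQ^*\ \Longrightarrow\ \eqref{gen-inv}\ \Longrightarrow\ \BQ^*\text{ given by }\eqref{Q*}\text{ is stationary and has Palm }\BQ\ \Longrightarrow\ \text{mass-stationarity},
\]
using the already established equivalence of mass-stationarity and the Palm property from \cite{LaTh09,Last2010} to close the cycle. The key computational device throughout is the cocycle identity
\[
\theta_{S_r}\theta_t(X,\xi)=\theta_{S_{r+\xi[0,t]}(X,\xi)}(X,\xi),\quad t\in\supp\xi,
\]
which holds since $\xi$ is diffuse and has infinite mass on both sides of~$0$. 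This means $(\theta_{S_r})_{r\in\R}$ is a measurable flow, and the change of variable $r=\xi[0,t]$ converts integrals against $\xi$ into Lebesgue integrals in the ``intrinsic'' time parameter.

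\emph{Step 1: $\BQ=\BQ^*_\xi$ for stationary $\BQ^*$ implies~\eqref{gen-inv}.} Here I would apply the refined Campbell theorem \eqref{refC} to the function $F((X,\xi),t)=\I_{[0,1]}(t)\,g(\theta_{S_r}(X,\xi))$, use the cocycle identity to transfer the shift inside, change variables to local-time parameter, and then use stationarity of~$\BQ^*$ to push the $r$-shift off onto a deterministic Lebesgue translation that leaves the integral invariant. This is a standard computation once the cocycle identity is in place.

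\emph{Step 2: \eqref{gen-inv} implies that $\BQ^*$ given by \eqref{Q*} is stationary, $\sigma$-finite, and has Palm measure $\BQ$.} Write $Z=(X,\xi)$. Independence of \eqref{Q*} from the choice of $w$ follows by additivity: splitting $[0,S_{w_1+w_2}]=[0,S_{w_1}]\cup[S_{w_1},S_{w_1+w_2}]$, shifting by $\theta_{S_{w_1}}$, and applying \eqref{gen-inv} gives $I(0,w_1+w_2)=I(0,w_1)+I(0,w_2)$, hence $I(0,w)/w$ is constant. Stationarity is similarly immediate: the difference
\[
\BE_{\BQ^*}f(\theta_s Z)-\BE_{\BQ^*}f(Z)=w^{-1}\BE_\BQ\Bigl[\int_{S_w}^{S_w+s}-\int_0^s\Bigr]f(\theta_t Z)\,dt
\]
vanishes after substituting $t\mapsto t+S_w$ in the first integral and using $\theta_{S_w}Z\stackrel{d}{=}Z$ under~$\BQ$. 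For the Palm identity, expand
\[
\BE_{\BQ^*}\int\I_{[0,1]}(t)F(\theta_tZ)\,\xi(dt)
=w^{-1}\BE_\BQ\int F(\theta_uZ)\,\phi(u,S_w)\,\xi(du),\qquad \phi(u,S_w):=\lambda([u-1,u]\cap[0,S_w]),
\]
parametrise by $u=S_r$, so $\phi(S_r,S_w)$ equals $1$ for $r\in[\xi[0,1],w]$, $S_r$ for $r\in[0,\xi[0,1]]$, and $S_w+1-S_r$ for $r\in[w,w+\xi[S_w,S_w+1]]$. The bulk term yields $\BQ(A)$ directly by \eqref{gen-inv}. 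The two boundary pieces, after shifting the second by $\theta_{S_w}$ and applying \eqref{gen-inv} again to identify it with an integral over $r'\in[0,\xi[0,1]]$ with weight $1-S_{r'}$, combine with $-\int_0^{\xi[0,1]}$ (coming from $\int_{\xi[0,1]}^w=\int_0^w-\int_0^{\xi[0,1]}$) to produce the integrand $-1+S_r+(1-S_r)=0$, cancelling exactly. Thus $\BQ$ is the Palm measure of $\BQ^*$, and $\sigma$-finiteness follows from that of $\BQ$ and the $\BQ$-a.s.\ finiteness of $S_w$.

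\emph{Step 3: closing the equivalence and uniqueness.} The equivalence with mass-stationarity follows from the result of \cite{LaTh09,Last2010} already cited in the remark preceding the theorem. Uniqueness of $\BQ^*$ in \eqref{Q*} is a direct consequence of the refined Campbell formula \eqref{refC} and the fact that two stationary $\sigma$-finite measures sharing the same Palm measure with respect to $\xi$ coincide (this is the standard inversion argument, again using that $\xi$ is diffuse with full two-sided mass).

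\emph{Anticipated difficulty.} The genuinely delicate point is Step~2, specifically showing that the boundary contributions in the refined Campbell identity cancel for \emph{every} $w>0$ rather than asymptotically as $w\to\infty$. This requires the careful bookkeeping of the three regimes of $\phi(S_r,S_w)$ together with a second application of \eqref{gen-inv} to map the far-boundary piece onto the near-boundary piece; everything else is an essentially mechanical consequence of the cocycle identity and the flow property of $(\theta_{S_r})_{r\in\R}$.
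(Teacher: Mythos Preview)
Your proposal is correct and closes the same circle of equivalences as the paper, but the two arguments are organized differently in three places.

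First, the paper proves \eqref{gen-inv} $\Rightarrow$ mass-stationarity \emph{directly}, via the inversion identity $S_r=-S_{-r}\circ\theta_{S_r}$ (which it records as a key observation) and a short change-of-variables computation in the defining integral~\eqref{mass-stat-def}. You bypass this and reach mass-stationarity only indirectly, through \eqref{gen-inv} $\Rightarrow$ Palm $\Rightarrow$ mass-stat. Logically this is fine, but the direct implication is arguably the novel content of the theorem, since the equivalence of mass-stationarity and the Palm property is already in \cite{LaTh09}.

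Second, for Palm $\Rightarrow$ \eqref{gen-inv} the paper does \emph{not} use the refined Campbell theorem as you do. Instead it invokes Theorem~\ref{th2} (the balancing criterion) after a purely pathwise verification that the allocation rule $\tau^r(t):=S_r\circ\theta_t+t$ balances $\xi$ with itself; this amounts to the one-line computation $\int\I\{a\le\tau^r(s)<b\}\,\xi(ds)=\xi[a,b]$. Your route via refined Campbell plus the cocycle identity plus $\theta_1$-stationarity of $\BQ^*$ also works, but is more computational; the paper's balancing argument is shorter and pathwise.

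Third, for showing that $\BQ$ is the Palm measure of the $\BQ^*$ defined by~\eqref{Q*}, the paper's computation again pivots on the inversion identity $S_r=-S_{-r}\circ\theta_{S_r}$: after parametrising by local time it applies \eqref{gen-inv} together with this identity to reduce the double integral to one that evaluates to $w$ exactly. Your bulk-plus-boundary decomposition, with a second application of \eqref{gen-inv} to transport the far boundary onto the near one, is a correct alternative; it is more bookkeeping but avoids introducing the inversion identity. The stationarity argument for $\BQ^*$ is essentially the same in both (the paper isolates it as a separate lemma), and both defer uniqueness of $\BQ^*$ to Mecke's inversion formula.
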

\begin{pf}
First assume \eqref{gen-inv}. Then
$\BQ\{\xi[0,\varepsilon]=0\}=\BQ\{\xi[S_1,S_1+\varepsilon]=0\}=0$
for any $\varepsilon>0$, where the second identity comes from
$\xi[S_1,\infty)>0$ $\BQ$-a.e. and the definition of $S_1$. This easily
implies that
%
%
%e3.4 #&#
\begin{equation}
\label{keyo} S_r = -S_{-r}\circ\theta_{S_r},\qquad \BQ
\mbox{-a.e.}, r\in\R.
\end{equation}

Let $C\subset\R$ be a bounded Borel with $\lambda(C)>0$
and $\lambda(\partial C)=0$. Changing variables
and noting that, for any $s$ in the support of $\xi$, we have
$\xi(C-v+s)>0$ for $\lambda$-a.e. $v\in C$,
we obtain that the left-hand side of \eqref{mass-stat-def} equals
\begin{eqnarray*}
&&\BE_\BQ\iint\I_C(v-s) \frac{\I_{C}(v)}{\xi(C-v + s)}f(
\theta_s,v) \xi (ds) \,dv
\\
&&\qquad=\BE_\BQ\iint\I_C(v-S_r)
\frac{\I_{C}(v)}{\xi(C-v + S_r)}f(\theta _{S_r},v) \,dr \,dv,
\end{eqnarray*}
where we have changed variables to get the equality.
The key observation \eqref{keyo}
and assumption \eqref{gen-inv} yield that the above equals
\begin{eqnarray*}
&&\BE_\BQ\iint\I_C(v+S_{-r})
\frac{\I_{C}(v)}{\xi(C-v)}f(\theta_0, v) \,dr \,dv
\\
&&\qquad=\BE_\BQ\iint\I_C(v+s) \frac{\I_{C}(v)}{\xi(C-v)}f(
\theta_0,v) \xi(ds) \,dv =\BE_\BQ\int \I_{C}(v)f(
\theta_0,v) \,dv. % \mbox{(since $\int1_C(v+s) \frac{1}{\xi(C-v)}\xi(ds)=1$).}
\end{eqnarray*}
Thus \eqref{mass-stat-def} holds, that is, $\xi$ is mass-stationary.

By \cite{LaTh09}, Theorem 6.3, equation \eqref{mass-stat-def}
is equivalent to the existence of
a stationary \mbox{$\sigma$-finite} measure $\BP$ such that
$\BQ$ is the Palm measure of $\xi$ with respect to $\BP$.
Mecke's \cite{Mecke} inversion formula
(see also \cite{LaTh09}, Section~2) implies that
$\BP$ is uniquely determined by $\BQ$ and
that, moreover,
$\BP\{\xi(-\infty,0)<\infty\}=\BP\{\xi(0,\infty)<\infty\}=0$.

Fix $w>0$. For the claim that
$\BP$ defined by \eqref{Q*} is stationary when \eqref{gen-inv} holds,
see Lemma~\ref{secondQ*} below.
To show that $\BQ$ is then the Palm measure of $\xi$
with respect to this $\BP$, let $f\dvtx\Omega\rightarrow[0,\infty)$ be
measurable
and use \eqref{Q*} for the first step in the following calculation:
\begin{eqnarray*}
w \BE_{\BP} \int\I_{[0,1]}(s) f\circ\theta_s
\xi(ds) &= &\BE_{\BQ}\iint \I_{[0,1]}(s) \I_{[0,S_w]}(t) f(
\theta_s\theta_t) \theta_t\xi(ds) \,dt
\\
&=&\BE_{\BQ}\iint \I_{[0,1]}(v-t) \I_{[0,S_w]}(t) f(
\theta_v) \xi(dv) \,dt
\\
&=& \BE_{\BQ}\iint \I_{[0,1]}(S_r-t)
\I_{[0,S_w]}(t) f(\theta_{S_r}) \,dr \,dt
\\
&=& \BE_{\BQ}\iint \I_{[0,1]}(-S_{-r}-t)
\I_{[0, S_{w}(\theta
_{S_{-r}})]}(t) f \,dr \,dt
\\
&=& \BE_{\BQ}f\iint \I_{[-1,0]}(u) \I_{[S_{-r}, S_{-r+w}]}(u) \,dr \,du
\\
&=& w \BE_{\BQ}f,
\end{eqnarray*}
where we have used \eqref{gen-inv} and \eqref{keyo}
for the fourth identity, and
the final identity holds since the double integral equals $w$.

Finally, if $\BQ$ is the Palm measure of $\xi$ with respect to
a $\sigma$-finite stationary measure~$\BP$,
then Theorem \ref{th2} implies
\eqref{gen-inv} once we have shown for any $r\in\R$ that
the allocation rule $\tau^r$ defined by
$\tau^r(t):=S_r\circ\theta_t+t$ balances $\xi$ with itself,
that is,
\[
\int\I\bigl\{\tau^r(s)\in\cdot\bigr\} \xi(ds)=\xi, \qquad\BP\mbox{-a.e.}
\]
Assume $r\ge0$. Then, outside the $\BP$-null set
$A:=\{\xi(-\infty,0)<\infty)\}\cup\{\xi(0,\infty)<\infty\}$
we obtain for any $a<b$
(interpreting $\xi[s,a]$ as $-\xi[a,s]$ for $s\ge a$)
that
%
%
%e3.5 #&#
\begin{eqnarray}
\label{867}
&&\int\I\bigl\{a\le\tau^r(s)< b\bigr\} \xi(ds)\nonumber \\
&&\qquad=
\int\I\bigl\{s\le b,\xi[s,a]\le r, \xi[s,b]> r\bigr\} \xi(ds)
\\
&&\qquad= \int\I\bigl\{s\le b,r < \xi[s,b]\le r+\xi[a,b] \bigr\} \xi(ds)=
\xi[a,b],\nonumber
\end{eqnarray}
which implies the desired balancing property.
The case $r<0$ can be treated similarly.
\end{pf}

%le3.2 #&#
\begin{lemma}\label{secondQ*}
Let $S\ge0$ be a random time and $\BP$ be the measure defined
by setting, for each measurable function $f\dvtx\Omega\rightarrow
[0,\infty)$,
\[
\BE_{\BP} f= \BE_{\BQ}\int_0^{S}
f\circ\theta_s \,ds.
\]
If $\BQ\{\theta_S\in\cdot\}=\BQ$, then $\xi$ is stationary under~$\BP$.
\end{lemma}
\begin{pf}
For each
$f$ as above and $t\in\R$,
\begin{eqnarray*}
\BE_{\BP} f(\theta_t)& =& \BE_{\BQ} \int
_t^{S+t} f(\theta_s) \,ds
\\
& =& \BE_{\BQ} \int_t^{S} f(
\theta_s) \,ds + \BE_{\BQ}\int_S^{S+t}
f(\theta_s) \,ds
\\
& =& \BE_{\BQ}\int_t^{S} f(
\theta_s) \,ds + \BE_{\BQ} \int_0^{t}
f(\theta_s) \,ds
\\
&= &\BE_{\BQ} \int_0^{S} f(
\theta_s) \,ds = \BE_{\BP} f,
\end{eqnarray*}
where the third identity follows from the assumption that
$\theta_S$ has the same distribution as $\theta_0$ under $\BQ$.
\end{pf}

In the remainder of this section we consider the Brownian case.
As a corollary
%hermann:
of Theorem~\ref{mass-stat}
we obtain
an alternative construction of the stationary measure~\eqref{statP} by integrating
 over time rather than space.

%co3.3 #&#
\begin{corollary}\label{timeP} %hermann: Consider the Brownian case.
%hermann 06082012:
Let $r>0$ and %consider
$T_r$ be defined by \eqref{Tr}. Then
%hermann 06082012: removed $\BP^w=w \BP$, where
%
\[
% \label{Pr}
%hermann 08082012:
\BP(A):=\int\BP_x(A) \,dx = r^{-1}
\BE_0 \int^{T_r}_0\I\{
\theta_sB\in A\} \,ds,\qquad A\in\mathcal A.
\]
\end{corollary}

%hermann 06082012:
% P modified
Generalising our earlier definition,
for any probability measure~$\mu$ on $\R$, we call a random time $T$ an
\emph{unbiased shift under~$\BP_\mu$} if
%guenter 1408 $T$ is a measurable function of $B$ and
$(B_{T+t}-B_T)_{t\in\R}$ under $\BP_\mu$ is
a Brownian motion independent of~$B_T$.
The following result %is more general than
contains Theorem~\ref{littletwin} as a special case.

%th3.4 #&#
\begin{theorem}\label{littletwingen} %hermann: Consider the Brownian
%case.
Let $\mu$
be a probability measure on $\R$, and let $S_r$, $r \in\R$, be the
generalised~inverse of
$\ell^\mu$ defined as in~\eqref{Tr}. Then each $S_r$ is an unbiased shift
%hermann 07082012: replaced unter by under
under $\BP_\mu$ and $\BP_\mu\{B_{S_r}\in\cdot\}=\mu$.
\end{theorem}
\begin{pf}
Lemma \ref{Palmlocal} and Fubini's theorem imply
that $\BP_\mu$ is the Palm measure of $\ell^\mu$ with
respect to $\BP$. Hence the result follows from Theorem \ref
{mass-stat}.
\end{pf}

The \emph{invariant $\sigma$-algebra} is defined
by
%
%
%e3.6 #&#
\begin{equation}
\label{invsigma} \mathcal I:=\{A\in\mathcal{A}\dvtx\mbox{$\theta_t
A=A$ for all $t\in\R $}\}.
\end{equation}
We now apply Theorem \ref{littletwin} to prove the following result
which we need in the proof of Theorem \ref{main2} in Section~\ref{secexistence}.

%hermann 06082012 replaced "any" by "all":
%
%th3.5 #&#
\begin{theorem}\label{H1} %hermann: Consider the Brownian case and
Let $A\in\mathcal I$. Then either $\BP_x(A)=0$
for all $x\in\R$ [in which case $\BP(A)=0$]
or $\BP_x(A^c)=0$ for all $x\in\R$ [in which case $\BP(A^c)=0$].
\end{theorem}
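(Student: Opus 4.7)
My plan has two stages: first, I would prove $\BP_0(A) \in \{0,1\}$ for every $A \in \mathcal{I}$, and then extend this uniformly to $\BP_x$ for every $x \in \R$.

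Observe first that spatial translation preserves $\mathcal{I}$: if $A \in \mathcal{I}$ and $y \in \R$, then $A - y := \{\omega : \omega + y \in A\}$ also lies in $\mathcal{I}$, because the time shift $\theta_t$ and the spatial shift $\omega \mapsto \omega + y$ commute. Moreover $\BP_x(A) = \BE_0[\I_A(B + x)] = \BP_0(A - x)$, so the dichotomy $\BP_0 \in \{0,1\}$ applied to each $A - x \in \mathcal{I}$ yields $\BP_x(A) \in \{0,1\}$ for every $x$. To show the common value is the same for all $x$, I would use a mirror coupling of two-sided Brownian motions $C^0 \sim \BP_0$ and $C^x \sim \BP_x$, constructed so that $C^0_t = C^x_t$ for $|t|$ larger than some a.s.\ finite random time $\tau$; combined with the tail triviality that emerges from the first stage, this forces $\I_A(C^0) = \I_A(C^x)$ a.s., and hence $\BP_0(A) = \BP_x(A)$.

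For the zero-one law under $\BP_0$, the key tool is Theorem~\ref{littletwin}: for every $r \in \R$, $\theta_{T_r}B$ has distribution $\BP_0$. For $r > 0$, the time $T_r$ is a stopping time of the forward filtration, so the strong Markov property gives that the forward part of $\theta_{T_r}B$, namely $(B_{T_r+s})_{s \geq 0}$, is Brownian motion from $0$ and is independent of $\mathcal{F}^+_{T_r} := \sigma(B_s : s \leq T_r)$; Theorem~\ref{littletwin} additionally ensures that the forward and backward parts of $\theta_{T_r}B$ are independent (as $\theta_{T_r}B$ is a genuine two-sided BM from $0$). Combining the invariance $\I_A(B) = \I_A(\theta_{T_r}B)$ with a conditional-expectation argument and letting $r \to \infty$, so that $\mathcal{F}^+_{T_r}$ increases to $\sigma(B_s : s \geq 0) =: \sigma(B^+)$, I expect to conclude that $\I_A$ is $\BP_0$-a.s.\ measurable with respect to the backward $\sigma$-algebra $\sigma(B^-)$, where $B^- := (B_{-t})_{t \geq 0}$. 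The symmetric argument using $r < 0$ (inverse local time at $0$ in the backward direction) gives that $\I_A$ is also measurable with respect to $\sigma(B^+)$. Since $B^+$ and $B^-$ are independent under $\BP_0$, $\I_A$ must be $\BP_0$-a.s.\ constant, so $\BP_0(A) \in \{0, 1\}$. Integrating $\BP_x(A)$ and $\BP_x(A^c)$ over $x \in \R$ via $\BP = \int \BP_x \, dx$ then yields the statements about $\BP$.

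The main obstacle will be the key measurability reduction that $\I_A$ is $\sigma(B^-)$-measurable modulo $\BP_0$-null sets: the backward part of $\theta_{T_r}B$ is not itself independent of $\mathcal{F}^+_{T_r}$ (it uses the reversed segment $(B_{T_r - s})_{0 \leq s \leq T_r}$), so $\I_A(\theta_{T_r}B)$ is not manifestly independent of $\mathcal{F}^+_{T_r}$. My proposed route is to first condition on the larger $\sigma$-algebra $\mathcal{F}^+_{T_r} \vee \sigma(B^-)$, on which the backward part of $\theta_{T_r}B$ is determined while the forward extension remains an independent BM from $0$; Theorem~\ref{littletwin} then allows one to identify the conditional law of $\I_A(\theta_{T_r}B)$ with $\BP_0(A)$, and a martingale convergence argument as $r \to \infty$ completes the measurability reduction.
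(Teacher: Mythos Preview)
Your Stage~1 conditioning step contains a genuine gap. Given $\mathcal{G}_r:=\mathcal{F}^+_{T_r}\vee\sigma(B^-)$, the backward part $(\theta_{T_r}B)^-$ is indeed $\mathcal{G}_r$-determined and the forward part $(\theta_{T_r}B)^+$ is an independent one-sided Brownian motion; hence
\[
\BE_0\bigl[\I_A(\theta_{T_r}B)\,\big|\,\mathcal{G}_r\bigr]
=h\bigl((\theta_{T_r}B)^-\bigr),
\qquad
h(w^-):=\BE_0\bigl[\I_A(w^-,B^+)\bigr],
\]
which is in general a non-constant function of the backward path, not the constant $\BP_0(A)$. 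Theorem~\ref{littletwin} tells you only that $(\theta_{T_r}B)^-$ has the law of a one-sided Brownian motion, so that $\BE_0\,h((\theta_{T_r}B)^-)=\BP_0(A)$; it does \emph{not} make $h$ a.s.\ constant. The martingale convergence you invoke is then vacuous, since $\mathcal{G}_r\uparrow\sigma(B)$ and one merely recovers $\I_A$ as the limit of its own conditional expectations, with no reduction to $\sigma(B^-)$. Your Stage~2 has a separate gap: a mirror coupling with $C^0_t=C^x_t$ for $|t|>\tau$ forces $\I_A(C^0)=\I_A(C^x)$ only if $A$ is invariant under bounded path modifications (a two-sided tail event), which is strictly stronger than shift-invariance; and any triviality established ``modulo $\BP_0$-null sets'' does not transfer to $\BP_x$, as $\BP_0$ and $\BP_x$ are mutually singular.

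The paper's proof avoids both obstacles. For the zero-one law it writes $B=F((W_n)_{n\in\Z})$ where the $W_n$ are the i.i.d.\ blocks of $B$ between successive inverse-local-time marks $T_n$ and $T_{n+1}$; shift-invariance of $A$ then becomes invariance of $\{F((W_n))\in A\}$ under the sequence shift, and ergodicity of i.i.d.\ sequences yields $\BP_0(A)\in\{0,1\}$ directly. For the extension to all $x$ it uses the refined Campbell identity $\BP_x(A)=\BE\,\I_A\,\ell^x[0,1]$ together with the $\BP$-a.e.\ non-degeneracy of local time, bypassing any coupling.
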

\begin{pf} We first show that
%
%
%e3.7 #&#
\begin{equation}
\label{01} \BP_0(A)\in\{0,1\}.
\end{equation}
We use here the random times $T_n$ [see \eqref{Tr}]
for integers $n$.
By Theorem \ref{littletwin}, for any integer $n$, the processes
$(B_{T_n-t})_{t\ge0}$ and $(B_{T_n+t})_{t\ge0}$ are
independent one-sided Brownian motions. This implies that the
processes
\[
W_n:=(B_{(T_n+t)\wedge(T_{n+1}-T_n)})_{t\ge0},
\]
are independent under $\BP_0$.
Since, by \eqref{0inv},
\[
\inf\bigl\{t\ge0 \dvtx\ell^0\bigl(\theta_{T_n}B,[0,t]
\bigr)=1\bigr\} =\inf\bigl\{t\ge0 \dvtx\ell^0\bigl(B,[T_n,T_n+t]
\bigr)=1\bigr\} =T_{n+1}-T_n
\]
holds $\BP_0$-a.s. for any $n\in\Z$,
the $W_n$ have the distribution
of a one-sided Brownian motion stopped at the time
its local time at $0$ reaches the value $1$.
Clearly we have that
$B=F((W_n)_{n\in\Z})$ for a suitably defined measurable function $F$.
By invariance of $A$ and definition of the family $(W_n)_{n\in\Z}$,
\[
\bigl\{F\bigl((W_{n})_{n\in\Z}\bigr)\in A\bigr\}=\{B\in A\} =\{
\theta_{T_1}B\in A\}=\bigl\{F\bigl((W_{n+1})_{n\in\Z}
\bigr)\in A\bigr\},
\]
where the final equation holds $\BP_0$-a.s.
As i.i.d. sequences are ergodic under shifts (see, e.g., Theorem~8.45
in~\cite{Dudley}),
the invariant sets above have measure zero or one, implying~\eqref{01}.

The refined Campbell theorem \eqref{refCy} implies
(with $\lambda$ denoting Lebesgue measure)
%
%
%e3.8 #&#
\begin{equation}
\label{345} \BP_x(A)=\lambda(C)^{-1}\BE_\BP
\I_A\ell^x(C), \qquad x\in\R,
\end{equation}
provided that $0<\lambda(C)<\infty$. Assume now that $\BP_0(A)=0$.
Then \eqref{345} implies that
\[
\BP\bigl(A\cap\bigl\{\ell^0(C)>0\bigr\}\bigr)=0
\]
for all compact $C\subset\R$. Letting $C\uparrow\R$, we obtain
$\BP(A\cap\{\ell^0\ne0\})=0$, that is,
\[
\BP_x\bigl(A\cap\bigl\{\ell^0\ne0\bigr\}\bigr)=0,\qquad \lambda
\mbox{-a.e. $x$}.
\]
On the other hand, by \eqref{0x},
$\BP_x\{\ell^0\ne0\}=\BP_0\{\ell^{-x}\ne0\}=1$
for $\lambda$-a.e. $x$
so that $\BP_x(A)=0$ for $\lambda$-a.e. $x$.
Therefore $\BP(A)=0$. By \eqref{345} this implies $\BP_x(A)=0$
for all $x\in\R$.
\end{pf}

%s4 #&#
\section{Unbiased shifts and balancing allocation rules}\label{sexinv}

In this section we consider the Brownian case and
prove the following result which
contains Theorem~\ref{main1} as a special case.
Let $\mu$ be a probability measure on $\R$, and recall
from Section~\ref{secmassstat} that a random time $T$
is an unbiased shift under $\BP_\mu$
%guenter 1408: $T$ is a measurable function of $B$, such that
if $(B_{T+t}-B_T)_{t\in\R}$ is under $\BP_\mu$
a Brownian motion independent of~$B_T$.

%th4.1 #&#
\begin{theorem}\label{main1a} Let $T$ be a random time
and $\mu,\nu$ be probability measures on~$\R$. Then
the following two assertions are equivalent:
\begin{longlist}[{(ii)}]
\item[{(i)}] $T$ is an unbiased shift under $\BP_\mu$ and
%It is true that
$\BP_\mu\{B_T\in\cdot\}=\nu$;
%and that $\theta_T B-B_T$ and $B_T$ are independent
%under $\BP_\mu$.
%
\item[{(ii)}] the allocation rule $\tau_T$ defined
by \eqref{tauT} balances $\ell^\mu$ and $\ell^\nu$.
\end{longlist}
\end{theorem}
\begin{pf}
First we recall from Section~\ref{secPalm}
that the random measures $\ell^\mu$ and $\ell^\nu$
are invariant in the sense of \eqref{xicov}.

Let us first assume that (i) holds. Then we have for
any $A\in\mathcal{A}$ that
\begin{eqnarray*}
\BP_\mu\{\theta_T B\in A\}&= &\int\BP_\mu\{
\theta_T B-B_T+x\in A\} \nu(dx)
\\
&=&\int\BP_0\{B+x\in A\} \nu(dx)=\BP_\nu(A).
\end{eqnarray*}
Lemma \ref{Palmlocal} and Fubini's theorem imply that
$\BP_\nu$ is the Palm measure of $\ell^\nu$. Therefore
we obtain from Theorem \ref{th2} that $\tau_T$ balances
$\ell^\mu$ and $\ell^\nu$.

Assume now that (ii) holds. By Theorem \ref{th2} we obtain
for any $A\in\mathcal{A}$ that
\[
\BP_\mu\{\theta_T B\in A\}=\int\BP_x(A)
\nu(dx).
\]
This implies
\[
\BP_\mu\bigl\{\theta_T B-B_T\in
A',B_T\in C\bigr\}= \int_C
\BP_x\bigl\{B-x\in A'\bigr\} \nu(dx) =\BP_0
\bigl(A'\bigr)\nu(C)
\]
for any $A'\in\mathcal{A}$ and any $C\in\mathcal{B}$.
This yields (i).
\end{pf}

%re4.2 #&#
\begin{remark}\label{remsub} An \emph{extended} allocation rule
is a mapping $\tau\dvtx\Omega\times\R\rightarrow[0,\infty]$
that has the equivariance property \eqref{allocation}.
The balancing property \eqref{xe} can then be defined
as before. Using these concepts, Theorem \ref{main1a} can be proved
for a subprobability measure $\nu\ne0$. The conditions in (i)
have to be replaced with
$\BP_\mu\{\theta_T B-B_T\in\cdot\mid T<\infty\}=\BP_0$,
$\BP_\mu\{T<\infty,B_T\in\cdot\}=\nu$ and the
independence of $\theta_T B-B_T$ and $B_T$
under $\BP_\mu\{\cdot|T<\infty\}$.
\end{remark}

%s5 #&#
\section{Existence of unbiased shifts}\label{secexistence}

%hermann 06081012:
In this section we prove
a result (Theorem~\ref{main2general}) containing Theorem~\ref{main2} as
a special case.
The proof is based on the following new balancing result for general
%hermann 07082012: removed "(jointly stationary)"
random measures on the line, which is inspired by~\cite{HPPS09}.
As in Section~\ref{secPalm} we consider a $\sigma$-finite measure space
$(\Omega,\mathcal{A},\BP)$, equipped with a flow
$\{\theta_t\dvtx t\in\R\}$ such that $\BP$ is stationary.
%Recall the definition \eqref{invsigma} of the invariant
%$\sigma$-algebra $\mathcal{I}$.
The invariant
$\sigma$-algebra $\mathcal{I}$ is defined as at \eqref{invsigma}.

%th5.1 #&#
\begin{theorem}\label{main3} Let $\xi$ and $\eta$ be
invariant orthogonal diffuse random measures on $\R$
with finite intensities. Assume further that
\[
\BE_\BP \bigl[ \xi[0,1] | \mathcal I \bigr] = \BE_\BP
\bigl[ \eta[0,1] | \mathcal I \bigr], \qquad\BP\mbox{-a.e.}
\]
Then the mapping $\tau\dvtx\Omega\times\R\rightarrow\R$, defined by
%
%
%e5.1 #&#
\begin{equation}
\label{matching} \tau(s):=\inf\bigl\{t> s \dvtx\xi[s,t]=\eta[s,t]\bigr\},\qquad  s\in\R,
\end{equation}
is an allocation rule balancing $\xi$ and $\eta$.
\end{theorem}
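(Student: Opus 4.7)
The plan is to verify the three requirements for $\tau$ to be an allocation rule balancing $\xi$ and $\eta$: measurability and equivariance, almost-sure finiteness of $\tau(s)$, and the pushforward identity \eqref{xe}.

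Measurability and equivariance are routine. Joint measurability of $(\omega,s,t)\mapsto \xi(\omega,[s,t])-\eta(\omega,[s,t])$ makes $\tau$ measurable as an infimum over a measurable set. Since both $\xi$ and $\eta$ satisfy the covariance property \eqref{xicov}, applying $\theta_u$ to the defining equation $\xi[s,t]=\eta[s,t]$ yields $\tau(\theta_u\omega,s-u)=\tau(\omega,s)-u$.

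For finiteness, I would introduce the continuous random function $X(t):=\xi[0,t]-\eta[0,t]$ for $t\ge 0$, extended to $t<0$ by $X(t):=\eta(t,0]-\xi(t,0]$, so that $X(t)-X(s)=\xi[s,t]-\eta[s,t]$ whenever $s\le t$ and $\tau(s)=\inf\{t>s:X(t)=X(s)\}$ is the first return of $X$ to its starting level. The hypothesis $\BE[\xi[0,1]\mid\mathcal I]=\BE[\eta[0,1]\mid\mathcal I]$ combined with Birkhoff's ergodic theorem applied to the stationary increments of $X$ forces $X(t)/t\to 0$ $\BP$-a.s.\ on every ergodic component. A standard recurrence argument for continuous processes with stationary ergodic zero-mean increments (e.g., by sampling at integer times and invoking Chung--Fuchs-type recurrence of the resulting zero-mean random walk) gives $\limsup X=+\infty$ and $\liminf X=-\infty$ on every nontrivial component, so $\tau(s)<\infty$ $\BP$-a.s.

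The balancing property is the heart of the proof, and the natural route is a nested excursion argument. Recurrence of $X$ decomposes, for any level $c$, the complement of $\{X=c\}$ into countably many maximal excursion intervals $[u,v]$ with $X(u)=X(v)=c$ and $X\ne c$ on $(u,v)$. On each such interval $X(v)-X(u)=0$ implies $\xi[u,v]=\eta[u,v]$. Orthogonality of $\xi$ and $\eta$ together with the first-return definition force $\tau$ to map $\supp\xi\cap(u,v)$ into $\supp\eta\cap(u,v)$, and the nested structure of sub-excursions gives a measure-preserving bijection on the restrictions. Summing over a suitable countable family of maximal excursions at rational levels yields $\int\I\{\tau(s)\in A\}\,\xi(ds)=\eta(A)$ for every Borel $A\subseteq\R$.

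The hardest step will be formalising the nested-excursion bijection rigorously, since $X$ can have arbitrarily intricate local path structure. An alternative, perhaps cleaner, route is to verify the equivalent Palm identity $\BQ_\xi\circ\theta_\tau^{-1}=\BQ_\eta$ via Theorem \ref{th2}: the refined Campbell theorem \eqref{refC} converts the balance into a comparison of Palm expectations, and the equal-conditional-intensities hypothesis together with Theorem \ref{H1} applied on the invariant $\sigma$-algebra $\mathcal I$ close the argument.
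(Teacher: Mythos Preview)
Your proposal has a genuine gap in the finiteness step and is too vague in the balancing step; the alternative route at the end is circular.

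\textbf{Finiteness.} Chung--Fuchs recurrence is a theorem about i.i.d.\ increments. For merely stationary ergodic zero-mean increments the conclusion $\limsup X=+\infty$, $\liminf X=-\infty$ can fail: take $X_n=Z_n-Z_{n-1}$ for an i.i.d.\ $\pm1$ sequence $(Z_n)$; the partial sums are bounded. So appealing to a ``Chung--Fuchs-type'' argument does not establish $\tau(s)<\infty$ for $\xi$-a.e.\ $s$. The paper avoids recurrence altogether: it first proves (Lemma~\ref{analytic}) that the pushforward $\eta^*$ of $\xi$ under $\tau$ agrees with $\eta$ on any half-line $[s,\infty)$ on which $f$ stays above its starting level, in particular to the right of any $s$ with $\tau(s)=\infty$. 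Stationarity then forces $\eta^*=\eta$ on the event $\{\xi^\infty\ne0\}$, where $\xi^\infty$ is the restriction of $\xi$ to $\{\tau=\infty\}$. A Campbell-theorem calculation comparing $\BE[\I\{\xi^\infty\ne0\}\eta^*[0,1]]$ with $\BE[\I\{\xi^\infty\ne0\}\xi[0,1]]$ and using the equal-conditional-intensity hypothesis yields $\tau(s)<\infty$ $\xi$-a.e. This mass-transport bookkeeping is the real substitute for recurrence.

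\textbf{Balancing.} Your excursion picture is the right one, but the sentence ``orthogonality \dots\ forces $\tau$ to map $\supp\xi\cap(u,v)$ into $\supp\eta\cap(u,v)$'' hides the key analytic fact you would need: for $\xi$-a.e.\ $s$ there is a sequence $s_n\downarrow s$ with $f(s_n)>f(s)$ (and dually for $\eta$). This is Lemma~\ref{des} in the paper, proved via a covering argument using orthogonality; without it you cannot rule out that $\tau(s)=s$ on a $\xi$-positive set, nor control the boundary set $C=\{f=m\}$ where $m$ is the backwards running minimum. The paper then decomposes $[0,a]$ into $C$ and the excursion intervals of $f-m$, showing $\eta(C)=0$ (Lemma~\ref{rest}) and exact balance on each excursion interval. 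Your ``rational levels'' suggestion would not organise the intervals correctly; the running-minimum decomposition does.

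\textbf{Alternative route.} Theorem~\ref{th2} is an equivalence, so proving $\BQ_\xi\circ\theta_\tau^{-1}=\BQ_\eta$ is the same problem as proving \eqref{xe}, not a shortcut. And Theorem~\ref{H1} is a statement about Brownian motion on the canonical space; the proof of Theorem~\ref{main3} must not use it (the remark following the theorem makes this explicit).
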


%has been formulated for random measures
%defined on
%our canonical
%$(\Omega,\mathcal{A},\BP)$. However, the special
%structure of
%that space is of no importance here.

We start the proof of Theorem~\ref{main3} with an analytic lemma.
Here and later it is convenient to work with the % Peter added
continuous function $f\dvtx\R\rightarrow\R$, defined by
\begin{eqnarray*}
f(t):=\cases{ \xi[0,t]-\eta[0,t],&\quad$\mbox{if $t\ge0$,}$\vspace*{2pt}
\cr
\eta[t,0]-\xi[t,0],&\quad $\mbox{if $t< 0$}.$}
\end{eqnarray*}

%le5.2 #&#
\begin{lemma}\label{analytic}
Suppose $\xi$ and $\eta$ are
orthogonal diffuse measures. Then
\[
\int\I\bigl\{\tau(s)\in\cdot\bigr\} \xi(ds)=\eta(\cdot) \qquad\mbox{on $[0,a]$},
\]
provided that $f(t)\ge0$ for all $t\in(0,a)$.
\end{lemma}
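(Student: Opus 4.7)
I would prove this lemma by a pathwise level-set matching argument built on the signed Stieltjes measure $df = \xi - \eta$. A direct case analysis (splitting $0 \le s \le t$, $s \le t \le 0$, and $s \le 0 \le t$) shows that $\xi([s,t]) - \eta([s,t]) = f(t) - f(s)$ for all $s \le t$, so $\tau(s) = \inf\{t > s : f(t) = f(s)\}$. By the $\pi$--$\lambda$ theorem it then suffices to prove $\int \I\{\tau(s) \in (c,d]\} \, \xi(ds) = \eta((c,d])$ for every $0 \le c < d \le a$.

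Under the hypothesis $f \ge 0$ on $(0,a)$, continuity yields $f \ge 0$ on $[0,a]$ with $f(0) = 0$. I would decompose the open set $U := \{t \in [0,a] : f(t) > 0\}$ into its countable collection of maximal open subintervals (the excursions of $f$ above level zero), with each complete excursion $(l,r)$ satisfying $f(l) = f(r) = 0$ and at most one incomplete excursion $(l,a)$ having $f(a) \ge 0$. On the zero set $Z := [0,a] \setminus U$ I would argue $\xi$ and $\eta$ vanish: on the interior of $Z$, $f$ is locally constant so $df \equiv 0$; on the boundary of $Z$, the Jordan decomposition of $df$ combined with orthogonality $\xi \perp \eta$ and the vanishing of the oscillation of $f$ at such points forces both to vanish.

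The heart of the proof is the per-excursion push-forward identity $\tau_*(\xi|_{(l,r)}) = \eta|_{(l,r)}$. For $\xi$-a.e.\ $s \in (l,r)$, orthogonality gives $f(s+\varepsilon) > f(s)$ for small $\varepsilon > 0$, and the intermediate value theorem applied on $[s,r]$ (using $f(r) = 0 < f(s)$) then yields $t \in (s,r]$ with $f(t) = f(s)$, so $\tau(s) \in (s,r]$. The full push-forward identity is obtained by iterating the excursion decomposition at each level $v > 0$ inside $(l,r)$: the sub-excursions of $f$ above $v$ partition $\{f > v\} \cap (l,r)$, and the same pairing of their left and right endpoints recursively matches $\xi$-mass with $\eta$-mass. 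The incomplete excursion $(l,a)$ is treated analogously---only $\xi$-mass at heights exceeding $f(a)$ maps into $[0,a]$, accounting exactly for $\eta((l,a])$. The main obstacle is making this per-excursion bootstrap rigorous for general diffuse $\xi, \eta$ (not only the absolutely continuous case); the cleanest route is likely a coarea-type decomposition via the Banach indicatrix of $f$, reducing the balancing identity to a level-by-level matching of up- and down-crossings.
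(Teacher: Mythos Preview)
Your proposal has the right level-set intuition, but the core step—the per-excursion pushforward identity $\tau_*(\xi|_{(l,r)})=\eta|_{(l,r)}$—is not proved; you flag the recursive bootstrap yourself as ``the main obstacle,'' and neither the iteration at successively higher levels nor the Banach-indicatrix route is carried out. A recursive excursion decomposition has no natural termination for a general continuous $f$ of bounded variation, and a coarea-type disintegration of $\xi$ and $\eta$ along level sets of $f$ (matching up-crossings to down-crossings level by level) is not a standard tool in this generality and would itself require substantial justification. Two smaller gaps: the claim that ``orthogonality gives $f(s+\varepsilon)>f(s)$ for small $\varepsilon$'' at $\xi$-a.e.\ $s$ needs a genuine covering argument (this is Lemma~\ref{des} in the paper, and its proof is not a one-liner); and your argument that $\xi$ vanishes on the zero set $Z$ via ``vanishing of oscillation at boundary points'' is not correct as phrased—$f$ certainly oscillates at $\partial Z$. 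What actually works is first $\eta(Z)=0$ (since $f$ cannot dip below $0$ immediately after an $\eta$-typical point in $[0,a)$), and then $\xi|_Z=\eta|_Z$ as measures on $Z$, which forces $\xi(Z)=0$ by orthogonality.

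The paper sidesteps the recursion entirely. Instead of establishing the full pushforward on a fixed excursion, it proves only the \emph{scalar} identity $\xi\{s:\tau(s)\in[0,a']\}=\eta[0,a']$ for each $a'\le a$ separately, and then lets $a'$ vary to recover the measure equality on $[0,a]$. For fixed $a'$, the right decomposition is not into level-$0$ excursions but into excursions of $f$ above its \emph{backwards running minimum} $m(t)=\min_{t\le u\le a'}f(u)$. Each such excursion interval $(t_e,t_e+\sigma_e)$ automatically satisfies $f(t_e)=f(t_e+\sigma_e)$, so $\xi(t_e,t_e+\sigma_e)=\eta(t_e,t_e+\sigma_e)$ trivially, and (via Lemma~\ref{des}) $\tau(s)\le t_e+\sigma_e\le a'$ for $\xi$-a.e.\ $s$ there—no need to know \emph{where} inside the interval $\tau(s)$ lands. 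On the complement $C=\{f=m\}$ one shows $\eta(C)=0$ and that the $\xi$-mass in $C$ mapping into $[0,a']$ also vanishes. Summing gives the scalar identity in one pass; the backwards-running-minimum device is precisely the missing idea that converts your recursive picture into a terminating argument.
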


The proof of Lemma~\ref{analytic} rests on three further lemmas.

%le5.3 #&#
\begin{lemma}\label{des}
\begin{longlist}[(a)]
\item[(a)] For $\xi$-almost every $s$ there exists $s_n\downarrow s$
with $f(s_n)>f(s)$.
\item[(b)] For $\eta$-almost every $s$ there exists $s_n\downarrow s$
with $f(s_n)<f(s)$.
\end{longlist}
\end{lemma}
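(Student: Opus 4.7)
The statement is pathwise, so on a $\BP$-full set we may work with a fixed realization of mutually singular diffuse locally finite measures $\xi$ and $\eta$ on $\R$; I drop $\omega$ from the notation. A direct inspection of the piecewise definition of $f$, together with diffuseness, gives the universal identity $f(t) - f(s) = \xi((s,t]) - \eta((s,t])$ for all $s < t$, so (a) reduces to showing $\xi((s, s+\varepsilon]) > \eta((s, s+\varepsilon])$ for arbitrarily small $\varepsilon > 0$, and (b) to the reverse inequality.

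For (a) I would combine two facts for $\xi$-almost every $s$. First, $s$ is a right-accumulation point of the support of $\xi$: the exceptional set $\{s : \xi([s, s+\varepsilon]) = 0 \text{ for some rational } \varepsilon > 0\}$ is contained in the union of maximal intervals on which the continuous increasing distribution function $F_\xi(t) := \xi([0,t])$ is constant, and hence has $\xi$-measure zero. Second, setting $\rho := \xi + \eta$ and using $\xi \perp \eta$ to write $\xi = \I_X \, d\rho$ and $\eta = \I_Y \, d\rho$ for disjoint Borel sets $X$ and $Y$, Lebesgue differentiation for $\rho$ with one-sided intervals yields
$$\lim_{\varepsilon \downarrow 0} \frac{\eta([s, s+\varepsilon])}{\rho([s, s+\varepsilon])} = \I_Y(s)$$
for $\rho$-almost every $s$ with $\rho([s, s+\varepsilon]) > 0$ for every $\varepsilon > 0$. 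For $\xi$-almost every $s$ we have $s \in X$ and $\rho([s, s+\varepsilon]) \geq \xi([s, s+\varepsilon]) > 0$, so this ratio tends to zero. Since simultaneously $\xi([s, s+\varepsilon])/\rho([s, s+\varepsilon]) \to 1$, dividing gives $\eta([s, s+\varepsilon])/\xi([s, s+\varepsilon]) \to 0$, so for all sufficiently small $\varepsilon > 0$ we have $\eta((s, s+\varepsilon]) < \xi((s, s+\varepsilon])$, i.e.\ $f(s+\varepsilon) > f(s)$. Taking $s_n = s + 1/n$ for large $n$ establishes (a); (b) follows by interchanging the roles of $\xi$ and $\eta$.

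The step that requires the most care is the validity of Lebesgue's differentiation theorem against the locally finite measure $\rho$ using the one-sided intervals $[s, s+\varepsilon]$ rather than symmetric neighborhoods. This is classical in one dimension, following from the weak $L^1$ bound for the right-sided Hardy--Littlewood maximal operator with respect to an arbitrary locally finite Borel measure on $\R$, but it is not an immediate consequence of the usual symmetric-ball version and I expect it is the point where an explicit citation or short argument would be needed.
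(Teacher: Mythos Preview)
Your argument is correct and takes a genuinely different route from the paper's.

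The paper gives a direct, self-contained covering argument: it fixes the bad set
\[
A_\epsilon:=\{s\in A: f(t)\le f(s)\text{ for all }t\in[s,s+\epsilon)\},
\]
where $A$ carries $\xi$ and satisfies $\eta(A)=0$, chooses by outer regularity an open $O\supset A$ with $\eta(O)<\delta$, and then covers $A_\epsilon$ by a countable family of nonoverlapping intervals $[s,s+\epsilon_s]$ with $(s,s+\epsilon_s)\subset O$. On each such interval $\xi(I)\le\eta(I)$ by definition of $A_\epsilon$, so summing gives $\xi(A_\epsilon)\le\eta(O)<\delta$, and letting $\delta\downarrow 0$ finishes.

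Your approach instead packages the same idea into the one-sided Lebesgue differentiation theorem with respect to $\rho=\xi+\eta$, concluding that for $\xi$-a.e.\ $s$ one has $f(s+\varepsilon)>f(s)$ for \emph{all} sufficiently small $\varepsilon>0$, which is in fact slightly stronger than what the lemma asserts. The price is that the one-sided differentiation result for a general Radon measure is, as you note, not quite off-the-shelf; the cleanest justification here exploits that $\rho$ is diffuse, so the continuous distribution function $F_\rho$ provides a change of variables reducing the statement to the classical one-sided Lebesgue differentiation theorem with respect to Lebesgue measure. The paper's hand-made covering argument is essentially this differentiation theorem unwound for the specific indicator $\I_Y$, which is why it avoids any external citation.
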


\begin{pf}
It suffices to prove~(a), as (b) follows by reversing the roles of
$\xi$ and~$\eta$.
Recall that $\xi$ and $\eta$ are
orthogonal, and hence there exists
a Borel set $A$ with $\eta(A)=0$ and $\xi(A^{ c})=0$. We need to
show that,
for each $\varepsilon>0$,
\[
\xi(A_\varepsilon)=0,\qquad \mbox{where } A_\varepsilon:=\bigl\{ s\in A \dvtx
\mbox{$f(t)\le f(s)$ for all $t\in[s,s+\varepsilon)$}\bigr\}.
\]
Given any $\delta>0$ we may choose an open set
$O \supset A$ with $\eta(O)<\delta$. We can cover
$A_\varepsilon$ by a countable collection $\mathcal I$ of nonoverlapping intervals
$[s,s+\varepsilon_s]$, $s\in A_\varepsilon$, $0<\varepsilon_s\leq\varepsilon$, such that
$(s,s+\varepsilon_s)\subset O$.
Indeed, suppose that $O'$ is a connected component of $O$, which
intersects~$A_\varepsilon$.
If there is a minimal element $s$ in $O'\cap A_\varepsilon$ let $\varepsilon_s$
be the minimum of $\varepsilon$ and the distance of $s$ to the
right endpoint of $O'$. Add the interval $[s,s+\varepsilon_s]$ to
the collection $\mathcal I$ and remove it from $A_\varepsilon$ and $O$.
If no such minimum exists we can pick a strictly
decreasing sequence $s_n\in O'\cap A_\varepsilon$, $n\in\N$, converging to
the infimum.
Let $\varepsilon_{s_1}$ be the minimum of $\varepsilon$ and
the distance of $s_1$ to the right endpoint of $O'$, and, for $i\geq2$,
let $\varepsilon_{s_i}$ be the minimum
of $\varepsilon$ and $s_{i-1}-s_{i}$. Add all intervals $[s_i,s_i+\varepsilon
_{s_i}]$ to
the collection $\mathcal I$ and remove their union from $A_\varepsilon$
and $O$.
Note that after one such step (performed in every connected component)
all of $A_\varepsilon$ in connected components of length at most $\varepsilon
$ will
be removed, and the lower bound of the intersection of all other
connected components
with $A_\varepsilon$, if finite, is increased by at least~$\varepsilon$.
Also, after one step,
the intersection of any connected component with $A_\varepsilon$ is either empty
or bounded from below. Therefore,
every set of the form $[-M,M]\cap A_\varepsilon$ will
be completely covered after finitely many steps by nonoverlapping intervals,
as required. Observe that $\xi(I)\leq\eta(I)$ for every interval
in the collection, and hence
\[
\xi(A_\varepsilon)  \leq\sum_{I\in\mathcal I}\xi(I) \leq
\sum_{I\in
\mathcal I} \eta(I) \leq \eta(O) \leq\delta.
\]
The result follows as $\delta>0$ was arbitrary.
\end{pf}

%f1 #&#
\begin{figure}

\includegraphics{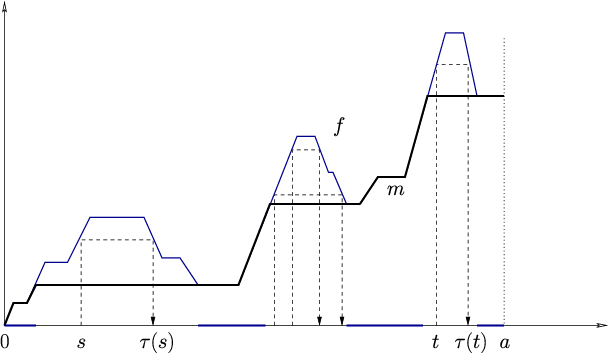}

\caption{{A schematic picture of the function $f$ and its backwards running
minimum $m$, in bold. The set $C$ is marked
bold on the abscissa, and instances of the mapping $t\mapsto\tau(t)$
are indicated by dashed lines.}}\label{fig1}
\end{figure}

We now fix $a\geq0$ and decompose $f$ on $[0,a]$
according to its backwards running minimum~$m$ given by
\[
m(t)= \min\bigl\{ f(s) \dvtx t \leq s \leq a\bigr\};
\]
see Figure~\ref{fig1} for illustration.
The nonnegative function $f-m$ can be decomposed on $[0,a]$ into a family
$\mathcal E$ of excursions
$e\dvtx[0,\infty)\to[0,\infty)$ with starting times $t_e\in[0,a]$.
Note that
an excursion $e\dvtx[0,\infty)\to[0,\infty)$ is a function such that there
exists a number $\sigma_e>0$, called the lifetime of the excursion,
such that
$e(0)=0$, $e(s)>0$ for all $0<s<\sigma_e$, and $e(s)=0$ for all $s\geq
\sigma_e$.
Formally putting $e(s)=0$ for all $s<0$ the decomposition can be
written as
\[
f(t)-m(t)=\sum_{(e,t_e)\in\mathcal E} e(t-t_e).
\]
Note that the intervals $(t_e,t_e+\sigma_e)$, $(e,t_e)\in\mathcal E$, are
disjoint. We denote by $C$ the complement of their union in $[0,a]$,
that is, $C=\{t\in[0,a] \dvtx f(t)=m(t)\}$.

% P small changes to picture

%le5.4 #&#
\begin{lemma}
For every $(e,t_e)\in\mathcal E$ we have
\[
\xi\bigl\{s\in(t_e,t_e+\sigma_e) \dvtx
\tau(s)\leq a\bigr\}= \xi(t_e,t_e+\sigma_e)
=\eta(t_e,t_e+\sigma_e).
\]
\end{lemma}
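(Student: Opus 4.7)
The plan is to reduce everything to properties of the continuous function $f$, for which the excursion picture is clean. Since $\xi$ and $\eta$ are diffuse, for $0\le s\le t$ we have $\xi[s,t]-\eta[s,t]=f(t)-f(s)$, so $\tau(s)=\inf\{t>s \colon f(t)=f(s)\}$. The lemma will then follow once I show that on each excursion interval the backwards running minimum $m$ is constant, so that in particular $f(t_e)=f(t_e+\sigma_e)$.

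To establish this constancy, I will argue as follows: for any $t\in(t_e,t_e+\sigma_e)$ the strict inequality $f>m$ on the open excursion forces the minimum defining $m(t)=\min_{[t,a]}f$ to be attained in $[t_e+\sigma_e,a]$, so $m(t)=\min_{[t_e+\sigma_e,a]}f=m(t_e+\sigma_e)=f(t_e+\sigma_e)$. Taking $t\downarrow t_e$ and using right continuity of $m$ (inherited from continuity of $f$ together with the fact that $m$ is non-decreasing) gives $m(t_e)=f(t_e+\sigma_e)$, whence $f(t_e)=m(t_e)=f(t_e+\sigma_e)$.

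The second equality in the lemma is then immediate from diffuseness, since
\[
\xi(t_e,t_e+\sigma_e)-\eta(t_e,t_e+\sigma_e)=f(t_e+\sigma_e)-f(t_e)=0.
\]
For the first equality, it suffices to show that $\tau(s)\le a$ for every $s\in(t_e,t_e+\sigma_e)$, for then the two sets of integration coincide. Given such an $s$, we have $f(s)>m(s)=f(t_e+\sigma_e)$, so by continuity of $f$ and the intermediate value theorem applied on $[s,t_e+\sigma_e]$ there is some $t\in(s,t_e+\sigma_e]$ with $f(t)=f(s)$, giving $\tau(s)\le t_e+\sigma_e\le a$. The only mildly subtle step is the constancy of $m$ on the excursion interval, which rests on right continuity of $m$; everything else is a direct consequence of the diffuseness assumption and continuity of $f$.
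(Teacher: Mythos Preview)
Your argument for the second equality is fine: the constancy of $m$ on each excursion interval, together with $f(t_e)=m(t_e)$ and $f(t_e+\sigma_e)=m(t_e+\sigma_e)$, indeed gives $f(t_e)=f(t_e+\sigma_e)$ and hence $\xi(t_e,t_e+\sigma_e)=\eta(t_e,t_e+\sigma_e)$.

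The first equality, however, has a genuine gap. You claim that for \emph{every} $s\in(t_e,t_e+\sigma_e)$ the intermediate value theorem on $[s,t_e+\sigma_e]$ produces some $t\in(s,t_e+\sigma_e]$ with $f(t)=f(s)$. But the IVT only guarantees such a $t$ in the \emph{closed} interval $[s,t_e+\sigma_e]$, and the endpoint $t=s$ may be the only one: nothing rules out that $f(t)<f(s)$ for all $t\in(s,t_e+\sigma_e]$. Concretely, take $\xi$ to be Lebesgue measure on $[0,1]$ and $\eta$ Lebesgue measure on $[1,2]$ with $a=2$; then the single excursion is $(0,2)$, and for $s=3/2$ one has $f(s)=1/2$ while $f$ is strictly decreasing on $[3/2,2]$, so no $t>s$ in that interval has $f(t)=f(s)$, and in fact $\tau(3/2)>a$. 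Thus the pointwise statement you aim for is simply false.

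What is true, and what the paper proves, is that $\tau(s)\le a$ for $\xi$-\emph{almost every} $s\in(t_e,t_e+\sigma_e)$. The missing ingredient is the orthogonality of $\xi$ and $\eta$, used via Lemma~\ref{des}(a): for $\xi$-a.e.\ $s$ there exist $s_n\downarrow s$ with $f(s_n)>f(s)$. One then applies the IVT on $[s_n,t_e+\sigma_e]$, where the endpoint values satisfy $f(s_n)>f(s)>f(t_e+\sigma_e)$, to obtain $s^*\in(s_n,t_e+\sigma_e)\subset(s,t_e+\sigma_e)$ with $f(s^*)=f(s)$, hence $\tau(s)\le s^*\le a$. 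Without this input the argument cannot be completed.
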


\begin{pf}
We only have to show that $\tau(s)\leq a$ for $\xi$-almost every
$s\in(t_e,t_e+\sigma_e)$. By Lemma~\ref{des}(a),
for $\xi$-almost every $s\in(t_e,t_e+\sigma_e)$, there
exists $s_n\downarrow s$ such that $f(s_n)>f(s)$.
As $f(s)>f(t_e+\sigma_e)$, by continuity of $f$, we infer that there exists
$s^*\in(s,t_e+\sigma_e)$ such that
$f(s^*)=f(s)$. Therefore $\tau(s)\leq s^*\leq a$ as required.
\end{pf}

%le5.5 #&#
\begin{lemma}\label{rest}
We have
\[
\xi\bigl\{s\in C \dvtx\tau(s)\leq a\bigr\}=\eta(C)=0.
\]
\end{lemma}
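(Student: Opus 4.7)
The plan is to prove the two claims $\eta(C)=0$ and $\xi\{s\in C : \tau(s)\leq a\}=0$ separately, using the monotonicity of the backwards running minimum $m(t) := \min_{r \in [t,a]} f(r)$ together with Lemma~\ref{des}. The function $m$ is continuous and nondecreasing; for $s \in C \cap [0,a)$, the identity $f(s)=m(s)$ and monotonicity of $m$ give $f(t) \geq m(t) \geq m(s) = f(s)$ for all $t \in [s,a]$, which is incompatible with the assertion of Lemma~\ref{des}(b) that for $\eta$-almost every $s$ there is $s_n \downarrow s$ with $f(s_n)<f(s)$. Since $\eta(\{a\}) = 0$ by diffuseness, $\eta(C)=0$.

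To handle $D := \{s \in C : \tau(s) \leq a\}$, split $D = D_1 \sqcup D_2$ according to whether $\tau(s) > s$ or $\tau(s)=s$. For $s \in D_1$ the definition of $\tau$ gives $f > f(s)$ on $(s,\tau(s))$ while $f(\tau(s))=f(s)$; the squeeze $m(s) \le m(\tau(s)) \le f(\tau(s)) = f(s) = m(s)$ forces $m \equiv m(s)$ on $[s,\tau(s)]$, so $(s,\tau(s))$ is a maximal component of $\{f > m\}$, i.e., an excursion interval with left endpoint $s$. Since the collection of excursion starting points is countable and $\xi$ is diffuse, $\xi(D_1)=0$.

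For $s \in D_2$, the existence of $t_n \downarrow s$ with $f(t_n) = f(s)$ together with the same squeeze argument gives $m(t_n) = m(s)$, so $m$ is constant on the nondegenerate interval $[s,t_1]$; thus $s$ belongs to some maximal closed flat interval $J_k = [\alpha_k, \beta_k]$ of $m$ at some level $c_k$, and there are only countably many such intervals. Maximality forces $f(\beta_k) = c_k$ (otherwise continuity would extend the flat interval past $\beta_k$). Setting $\gamma_k := \min(C \cap J_k)$, which exists since $\beta_k \in C \cap J_k$ and $C$ is closed, one has $f(\gamma_k) = c_k = f(\beta_k)$, hence $\xi[\gamma_k,\beta_k] = \eta[\gamma_k,\beta_k]$. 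Decomposing this interval into $C$-points and the intermediate excursion intervals, invoking the previous lemma (which yields $\xi = \eta$ on each such excursion) together with $\eta(C)=0$, one obtains $\xi(C \cap J_k) = \xi(C \cap [\gamma_k,\beta_k]) = 0$. A countable sum gives $\xi(D_2)=0$. The main obstacle is the $D_2$ case: one must first recognize that $\tau(s)=s$ confines $s$ to a flat interval of $m$, then trim to the leftmost $C$-point inside this interval to recover the endpoint match of $f$, and finally combine the resulting equality $\xi[\gamma_k,\beta_k]=\eta[\gamma_k,\beta_k]$ with the excursion balance and $\eta(C)=0$ to annihilate $\xi$ on $C$ within each flat interval.
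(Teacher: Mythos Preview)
Your proof is correct. Both your argument and the paper's hinge on the same underlying structure---the countably many nondegenerate flat intervals of the backwards running minimum $m$ (equivalently, the countably many levels at which $f$ has a true local minimum)---but the organization differs. The paper keeps $D$ in one piece: for each $s\in C$ with $\tau(s)\le a$ it identifies the level $l=f(s)$ as one of countably many values, asserts $\xi\{s\in C:f(s)=l\}=\eta\{s\in C:f(s)=l\}$, and sums to obtain $\xi\{s\in C:\tau(s)\le a\}\le \eta(C)$ \emph{before} proving $\eta(C)=0$. You reverse the order, establishing $\eta(C)=0$ first and then feeding it into the $D_2$ step; you also split off $D_1$ and dispatch it by observing that such $s$ are left endpoints of excursions, hence a countable set of zero $\xi$-mass. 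Your $D_2$ argument, with the explicit introduction of $\gamma_k=\min(C\cap J_k)$ and the appeal to the excursion balance lemma, spells out the justification that the paper leaves implicit in its one-line assertion of the level-set equality. Note, incidentally, that your $D_1$ case is already covered by your $D_2$ machinery (any $s\in D_1$ lies in the nondegenerate flat interval $[s,\tau(s)]$), so the split is not strictly necessary---though handling $D_1$ via diffuseness is a clean shortcut.
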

\begin{pf}
First observe that if $s\in C$, then $f(t)\geq f(s)$ for all
$t\in(s,a]$. If $f(t)> f(s)$ for all $t\in(s,a]$, then $\tau(s)>a$.
Otherwise there exists a maximal
$t\in(s,a]$ with $f(s)=f(t)$. Then $f(t)$ is a true local minimum of
$f$ in the sense that there exists
$r>0$ with $f(s)\geq f(t)$ for all $s\in(t-r,t)$ and $f(s)> f(t)$
for all $s\in(t,(t+r)\wedge a)$. In particular
there are at most countably many levels $f(s)$ where this can happen.
Fixing such a level~$l$ we note that
$\xi\{s\in C \dvtx f(s)=l\}= \eta\{s\in C \dvtx f(s)=l\}$. Summing
over all these levels we see that
$\xi\{s\in C \dvtx\tau(s)\leq a\}\leq\eta(C)$. We conclude the
proof by showing that $\eta(C)=0$. Lemma~\ref{des}(b) ensures that,
for $\eta$-almost
every $s\in[0,a]$ there exists $s_n\downarrow s$ such that $f(s_n)<f(s)$,
which implies that $s\notin C$. Hence
the stated equality follows.
\end{pf}

\begin{pf*}{Proof of Lemma~\ref{analytic}}
Taking the sum over the equations in the previous two lemmas we obtain
$\xi\{s\geq0 \dvtx\tau(s)\leq a\}=\eta[0,a]$.
This implies
\[
\int\I\bigl\{0\leq\tau(s)\leq a\bigr\} \xi(ds)=\eta[0,a],\qquad a\geq0
\]
as any $s<0$ with $f(s)<0$ satisfies $\tau(s)\notin[0,a]$, and
Lemma~\ref{des}(a)
implies that $\xi$-almost every
$s<0$ with $f(s)\geq0$ satisfies $\tau(s)<0$, and so
$\xi\{s<0 \dvtx0<\tau(s)\leq a\}=0$.
\end{pf*}

\begin{pf*}{Proof of Theorem~\ref{main3}} Define
\begin{eqnarray*}
\xi^\infty&:=& \int\I\bigl\{\tau(s)=\infty, s \in\cdot\bigr\} \xi(ds),
\\
\eta^*&:=&\int\I\bigl\{\tau(s)\in\cdot\bigr\} \xi(ds).
\end{eqnarray*}
Recall that by Lemma~\ref{analytic} we have $\eta^*=\eta$ on $[s,\infty)$
provided $\xi[s,t]\ge\eta[s,t]$ for all $t\ge s$.
By Lemma~\ref{des} this holds
for $\xi^\infty$-a.e.~$s$. Moreover, by invariance of $\xi^\infty$ and
stationarity of $\BP$ we
have that $\BP\{\xi^\infty(-\infty,s]=0,\xi^\infty\ne0\}=0$
for all $s\in\R$. We infer that
%
%
%e5.2 #&#
\begin{equation}
\label{use1} \eta^*=\eta, \qquad\mbox{$\BP$-a.e. on $\bigl\{\xi^\infty\neq0
\bigr\}$.}
\end{equation}
Using the refined Campbell theorem \eqref{refC} twice, we obtain
%
%
%e5.3 #&#
\begin{eqnarray}
\label{campcalc} %
&&\BE_\BP \I\bigl\{
\xi^\infty\neq0\bigr\} \int_0^1 \I
\bigl\{\tau(s)<\infty\bigr\} \xi(ds)
\nonumber\\
&&\qquad = \BE_\BP\int_0^1 \I\bigl\{
\xi^\infty\circ\theta_s\neq0, \tau(\theta
_s,0)<\infty\bigr\} \xi(ds)
\nonumber\\
&&\qquad = \BQ_\xi \bigl\{ \xi^\infty\neq0, \tau(0)<\infty \bigr\}
\nonumber
\\[-8pt]
\\[-8pt]
\nonumber
&&\qquad = \BE_{\BQ_\xi} \int\I\bigl\{\xi^\infty\neq0, \tau(0)+s\in[0,1]
\bigr\} \,ds
\\
&&\qquad = \BE_\BP\int\I\bigl\{\xi^\infty\neq0, \tau(s)\in[0,1]
\bigr\} \xi(ds)\nonumber
\\
&&\qquad = \BE_\BP\I\bigl\{\xi^\infty\neq0\bigr\}
\eta^*[0,1].\nonumber
\end{eqnarray}
Using first \eqref{use1} and then our assumption gives
\[
\BE_\BP\I\bigl\{\xi^\infty\neq0\bigr\} \eta^*[0,1] =
\BE_\BP\I\bigl\{\xi^\infty\neq0\bigr\} \eta[0,1] =
\BE_\BP\I\bigl\{\xi^\infty\neq0\bigr\} \xi[0,1],
\]
and together with~\eqref{campcalc} we infer that
\[
\BE_\BP \I\bigl\{\xi^\infty\neq0\bigr\} \int
_0^1 \I\bigl\{\tau(s)<\infty\bigr\} \xi(ds) =
\BE_\BP\I\bigl\{\xi^\infty\neq0\bigr\} \xi[0,1],
\]
and therefore
$\tau(s)<\infty$ for $\xi$-a.e. $s$, $\BP$-a.e.
In particular, this implies that $\tau$ is a well-defined allocation rule.
An analogous argument implies that
\[
\tau^{-1}(s)>-\infty,\qquad \mbox{$\eta$-a.e. $s$, $\BP$-a.e.},
\]
where
\[
\tau^{-1}(s)=\sup\bigl\{t<s\dvtx\xi[t,s]=\eta[t,s]\bigr\}
\]
is the inverse of $\tau$.
We now use this to show that $\tau$ balances $\xi$ and $\eta$.
Fixing $a<b$ we aim to show that $\eta^*[a,b]=\eta[a,b]$.
If $f(t)\ge f(a)$ for all $t\in[a,b]$, this holds by
Lemma~\ref{analytic}.
Otherwise we apply this lemma to suitably
chosen alternative intervals. To this end let
\[
a^*:=\min\bigl\{s\in[a,b] \dvtx\mbox{$f(s)\leq f(t)$ for all $a\le t \le b$}
\bigr\}
\]
be the leftmost minimiser of $f$ on $[a,b]$.
As $\eta(a^*-\frac1n, a^*]\geq f(a^*-\frac1n)-f(a^*)
>0$ for all sufficiently large~$n\in\N$, we find a decreasing sequence $s_n$
with $\tau(s_n)\downarrow a^*$
and hence $f(s_n)\to f(a^*)$.
Then $s_n\downarrow s\in[-\infty,a]$ and $f(s)=f(a^*)$
if $s\neq-\infty$.

Assuming first that $s\neq-\infty$, we obtain from Lemma~\ref
{analytic} that
\[
\int\I\bigl\{\tau(s)\in\cdot\bigr\} \xi(ds)=\eta(\cdot),\qquad \mbox{on $[s,b]$},
\]
which implies the statement. Now assume that $s=-\infty$. In this case
we get $\eta^*=\eta$ on $[s_n,\tau(s_n)]$ and on $[a^*,b]$ for every $n$,
and the result follows as $n\to\infty$.
\end{pf*}

The following is a counterpart of Theorem~\ref{main3}
for simple point processes.

%th5.6 #&#
\begin{theorem}\label{tpp} Let $\xi$ and $\eta$ be invariant simple
point processes
on $\R$ defined on some probability space
equipped with a flow and an invariant $\sigma$-finite
measure. Assume that $\xi$ and $\eta$ have finite
intensities and that
\[
\BE_\BP\bigl[\xi[0,1]\mid\mathcal{I}\bigr]=\BE_\BP\bigl[
\eta[0,1]\mid\mathcal{I}\bigr].
\]
Then
%
%
%e5.4 #&#
\begin{equation}
\label{32} \tau(s):=\inf\bigl\{t\ge s \dvtx\xi[s,t]=\eta[s,t]\bigr\}, \qquad s\in\R,
\end{equation}
is an allocation rule balancing $\xi$ and $\eta$.
\end{theorem}

The % P inserted
allocation rule
$\tau$ in Theorem~\ref{tpp} is a one-sided
(and one-dimensional) version of the stable matching procedure
described in \cite{HPPS09}.
It can be proved by adapting the ideas of Theorem~\ref{main3}
to a discrete and therefore much simpler setup.

%hermann 06082012:
Theorem~\ref{main3} implies the following result which contains
Theorem~\ref{main2} as a special case.

%th5.7 #&#
\begin{theorem}\label{main2general}
Consider the Brownian case.
If $\mu$ and $\nu$ are orthogonal probability measures on $\R$,
then the stopping time
%
%
%e5.5 #&#
\begin{equation}
\label{bertoin3} T^{\mu, \nu}:=\inf \bigl\{t> 0\dvtx\ell^\mu[0,t]=
\ell^\nu[0,t] \bigr\}
\end{equation}
is an unbiased shift under $\BP_\mu$ and $\BP_\mu\{B_{T^{\mu, \nu}}\in
\cdot\}=\nu$.
\end{theorem}
\begin{pf}
Theorem~\ref{H1} implies that almost surely
$\BE_\BP[ \ell^\mu[0,1]| \mathcal I] =\break   \BE_\BP[ \ell^\nu[0,1]| \mathcal I]$.
By assumption and \eqref{0y} the invariant random
measures $\ell^\mu$ and $\ell^\nu$ are orthogonal.
Hence we can combine Theorems~\ref{main3} and \ref{main1a}
to obtain the result.
\end{pf}

%

%re5.8 #&#
\begin{remark}\label{rsub}
Assume in Theorem \ref{main2} that $\nu$ is a subprobability
measure. Then $T$ takes
the value $\infty$ with positive $\BP_0$-probability. Indeed,
by Remark~\ref{remsub}, defining the extended
allocation rule $\tau$ by $\tau(s):=s+T\circ\theta_s$
we get that $\tau$ balances
the restriction of $\ell^0$ to $\{s\dvtx\tau(s)<\infty\}$
and $\ell^\nu$. Assertion (i) of Theorem~\ref{main1a}
remains valid in the sense explained in Remark \ref{remsub}.
% and gives $\BP_0\{T<\infty\}=\nu(\R)$.
% P removed: The embedding part of this result was proved in
% It is unclear what is meant and we better comment on \cite{BertJan93}
%on a different
% occasion.
The embedding property $\BP_0\{T<\infty,B_T\in\cdot\}=\nu$
was proved in \cite{BertJan93}.
\end{remark}

%re5.9 #&#
\begin{remark}\label{rsuper}
Assume in Theorem~\ref{main2} that $\nu$ is a locally finite
measure with $\nu(\R)>1$ and $\nu\{0\}=0$. Then $\BP_0\{T<\infty\}=1$
and $\tau$ balances $\ell^0$ and $\eta:=\int\I\{\tau(s)\in\cdot\} \ell^0(ds)$.
The proof of Theorem~\ref{main3} still yields the inequality
$\eta\le\ell^\nu$. In particular $\eta$ is a diffuse (and invariant)
random measure with
intensity~$1$. The additive and continuous process
$(A_t)_{t\ge0}$ given by $A_t:=\eta[0,t]$ is adapted to the filtration
$(\sigma\{B_s\dvtx s\le t\})_{t\ge0}$.
However, since Theorem~22.25 in \cite{Kallenberg} applies only to
one-sided Brownian
motion, %hermann: (with a trivial $\sigma$-field at time $0$)
we cannot conclude that the process $(A_t)_{t\ge0}$ is of the form
$(\ell^{\nu'}[0,t])_{t\ge0}$ for some\vadjust{\goodbreak} probability measure $\nu'$, and
therefore it does not follow that the associated stopping time is an
unbiased shift.
The case $\nu=2\delta_1$ gives an example where it is easy to see that
this may
not be the case. Another example is discussed in Remark~\ref{rsuper2} below.
\end{remark}

%re5.10 #&#
\begin{remark}\label{rsuper2}
In \cite{BertJan93} stopping times of the form discussed in Remark~\ref{rsuper}
are used to embed a given probability measure $\nu'$ with $\int|x| \nu
'(dx)<\infty$
and $\nu'\{0\}=0$. Indeed, as in \cite{BertJan93}, page 547, define $\rho
(x):=2\int\I\{y>x\}(y-x) \nu'(dy)$ for $x\ge0$
and $\rho(x):=2\int\I\{y<x\}(x-y) \nu'(dy)$ for $x<0$. Let $m_0$ be the
maximum of the two numbers
$2\int^\infty_0 y \nu'(dy)$ and $-2\int^0_{-\infty} y \nu'(dy)$.
It is proved in \cite{BertJan93} that
% P changed <= to < in line with \cite{BertJan93}
%
%
%e5.6 #&#
\begin{equation}
\label{bertoin2} T:=\inf \biggl\{t> 0\dvtx\ell^0[0,t]<
m_0\int\ell^x[0,t]\rho^{-1}(x) \nu
'(dx) \biggr\}
\end{equation}
embeds $\nu'$ and satisfies $\BE_0 \ell^0[0,T]=m_0$.
This $T$ is
%hermann 07082012: replaced if by of
of the form \eqref{bertoin} with $\nu(\R)>1$, provided that $\rho>0$
$\nu'$-almost everywhere. This solution of the embedding problem is
optimal in the sense that
$\BE_0 \ell^x[0,S]\ge\BE_0 \ell^x[0,T]$, $x\in\R$, for any other
stopping time $S\ge0$
embedding $\nu'$. The idea of using first passage times of additive
functionals with
infinite Revuz measures to embed probability distributions goes back
to~\cite{monroe72b}.
The fact that $\BE_0 \ell^0[0,T]<\infty$ reveals that $T$ cannot be an
unbiased shift, as we show
in Theorem~\ref{t14lower} that this expectation is infinite for
unbiased shifts.
\end{remark}

The nonnegative unbiased shifts in Theorems~\ref{main2},
\ref{main2a} and in~\ref{littletwin}
are all stopping times. In the next example we construct a
nonnegative unbiased shift
embedding a distribution not concentrated
at zero, which is not a stopping time.

%ex5.11 #&#
\begin{example}\label{exnonstop} Let $x\in\R\setminus\{0\}$.
We define an allocation rule $\tau$ that balances
$\ell^0$ and $\ell^x$ and such that $T:=\tau(0)$
is nonnegative but not a stopping time.
The mapping $\tau$ is the composition of
the following five allocation rules.
Let $\tau_1=\tau_4$ balance $\ell^0$ and $\ell^x$ according to
Theorem~\ref{main2}.
Let $\tau_2$ balance $\ell^x$ and $\ell^x$ by shifting
forward one mass-unit, that is, let $\tau_2(0)$ be defined
by \eqref{Tr} with $r=1$ and with $\ell^0$ replaced with $\ell^x$.
Let $\tau_3$ balance $\ell^x$ and $\ell^0$ according to
Theorem~\ref{main2}.
Finally define $\tau_5$ by shifting \emph{backwards} one mass-unit in
the local time at $x$; that is, let $\tau_5$ be defined
by \eqref{Tr} with $r=-1$ and $\ell^0$ replaced with $\ell^x$.
The composition $\tau$ of these allocation rules
balances $\ell^0$ and $\ell^x$. Moreover, $T:=\tau(0)\ge\tau_1(0)\ge0$.
However, $T$ is not a stopping time. This example
can be extended to a general target distribution~$\nu$.
\end{example}

%s6 #&#
\section{Target distributions with an atom at zero}\label{seczero}

In this section we prove Theorems~\ref{main2a}, \ref{prop3},
and \ref{p11}. In contrast to the previous section
we allow here for an atom at $0$.\vadjust{\goodbreak}

\begin{pf*}{Proof of Theorem~\ref{main2a}}
Let $y\in\R\setminus\{0\}$ such that
$\nu\{y\}=0$, and define
\[
\mu:=\nu-\nu\{0\}\delta_0+\nu\{0\}\delta_y.
\]
Theorems~\ref{main1} and \ref{main2} imply
that the allocation rule
\[
\tau'(s):=\inf \bigl\{t> s\dvtx\ell^0[s,t]=
\ell^\mu[s,t] \bigr\}, \qquad s\in\R,
\]
balances $\ell^0$ and $\ell^\mu$. The same theorems imply that there
is an allocation rule $\tau''$ that balances $\ell^y$ and $\ell^0$.
Define
\[
\tau(s):= \cases{ \tau'(s),&\quad $\mbox{if $B_{\tau'(s)}\ne y$,}$
\vspace*{2pt}
\cr
\tau''\bigl(\tau'(s)
\bigr), &\quad $\mbox{if $B_{\tau'(s)}= y$}.$}
\]
Then we have for any Borel set $C\subset\R$ outside
a fixed $\BP$-null set that
\begin{eqnarray*}
\hspace*{-4pt}&&\int\I\bigl\{\tau(s)\in C\bigr\} \ell^0(ds)
\\
\hspace*{-4pt}&&\qquad=\int\I\bigl\{\tau(s)\in C,B_{\tau'(s)}\ne y\bigr\} \ell^0(ds)
\\
\hspace*{-4pt}&&\qquad\quad{}+\int\I\bigl\{\tau(s)\in C,B_{\tau'(s)}= y\bigr\} \ell^0(ds)
\\
\hspace*{-4pt}&&\qquad=\int\I\bigl\{\tau'(s)\in C,B_{\tau'(s)}\ne y\bigr\}
\ell^0(ds) \\
\hspace*{-4pt}&&\qquad\quad{}+\int\I\bigl\{\tau''\bigl(
\tau'(s)\bigr)\in C,B_{\tau'(s)}= y\bigr\} \ell^0(ds)
\\
\hspace*{-4pt}&&\qquad=\int\I\{s\in C,B_s\ne y\} \ell^\mu(ds) +\int\I\bigl\{
\tau''(s)\in C,B_s= y\bigr\}
\ell^\mu(ds)
\\
\hspace*{-4pt}&&\qquad=\int\I\{x\ne0\}\I\{s\in C\} \ell^x(ds) \nu(dx) +\nu\{0\}\int\I
\bigl\{\tau''(s)\in C\bigr\} \ell^y(ds)
\\
\hspace*{-4pt}&&\qquad=\int\I\{x\ne0\}\I\{s\in C\} \ell^x(ds) \nu(dx) +\nu\{0\}
\ell^0(C)=\ell^\nu(C),
\end{eqnarray*}
where we have used \eqref{0y} (and $\nu\{y\}=0$) in
the penultimate equation.
Hence $\tau$ balances $\ell^0$ and $\ell^\nu$. Theorem~\ref{main1}
now implies that $T:=\tau(0)$ is an unbiased shift embedding $\nu$.
\end{pf*}

\begin{pf*}{Proof of Theorem~\ref{prop3}}
Let $T$ be any unbiased shift embedding $\nu$ and define $\tau:=\tau_T$.
Outside a fixed $\BP$-null set we obtain for any Borel set
$C\subset\R$ that
\begin{eqnarray*}
\int\I\bigl\{s\in C,\tau(s)=s\bigr\} \ell^0(ds) &=&\int\I\bigl\{
\tau(s)\in C,\tau(s)=s\bigr\} \ell^0(ds)
\\
&=&\int\I\bigl\{\tau(s)\in C,\tau(s)=s,B_{\tau(s)}=0\bigr\}
\ell^0(ds)
\\
&\le&\int\I\bigl\{\tau(s)\in C,B_{\tau(s)}=0\bigr\} \ell^0(ds)\\
& =&
\int\I\{s\in C,B_s=0\} \ell^\nu(ds)=\nu\{0\}
\ell^0(C),
\end{eqnarray*}
where we have used \eqref{0y} to obtain the final identity.
This implies that
\[
\I\bigl\{\tau(s)=s\bigr\}\le\nu\{0\}, \qquad\mbox{$\ell^0$-a.e. $s$, $
\BP$-a.e.}
\]
Assuming now that $\nu\{0\}<1$ we obtain
$\tau(s)\ne s$ for $\ell^0$-a.e. $s$, $\BP$-almost
everywhere. Lemma \ref{Palmlocal} now implies \eqref{21}.
\end{pf*}

\begin{pf*}{Proof of Theorem~\ref{p11}}
Let $\tau':=\tau_{T'}$,
where $T'$ is given by \eqref{bertoin} with
$\nu=p\delta_1+(1-p)\delta_2$. Define an invariant random measure
$\xi$ by $\xi(dt):=\I\{B_{\tau'(t)}=2\} \ell^0(dt)$.
The allocation rule
\[
\tau''(s):=\inf \bigl\{t> s\dvtx\xi[s,t]=1 \bigr\}
\]
balances $\xi$ with itself. Define
\[
\tau(s):=\cases{ s,&\quad $\mbox{if $B_{\tau'(s)}=1$,}$\vspace*{2pt}
\cr
\tau''(s),&\quad $\mbox{if $B_{\tau'(s)}= 2$}.$}
\]
It is easy to see that $\tau$ balances $\ell^0$ with itself.
Lemma \ref{Palmlocal} and Theorem~\ref{main1}
(or a direct calculation) implies that $T:=\tau(0)$ satisfies
\[
\BP_0\{T=0\}=\BP_0\{B_{\tau'(0)}=0\}=p.
\]
Since $T$ is an unbiased shift, the proof is complete.
\end{pf*}

%s7 #&#
\section{Stability and minimality of balancing allocations}\label{secstability}

We first work in the general setting of Section~\ref{secPalm}.
The following definition is a one-sided version of the notion of
stability introduced
in~\cite{HPPS09} for point processes. We call an allocation rule
$\tau\dvtx\Omega\times\R\rightarrow\R$ balancing $\xi$ and $\eta$
\emph{right-stable} if
$\tau(s)\geq s$ for all $s\in\R$ and
\[
\xi\otimes\xi \bigl\{ (s,t) \dvtx t<s\leq\tau(t) < \tau(s) \bigr\}=0,\qquad \BP
\mbox{-a.e.}
\]
Roughly speaking this means that the mass of pairs $(s,t)$ such that
$s$ would
prefer the partner of $t$ over its own partner, while $\tau(t)$ would
prefer $s$
over $t$ as a partner, vanishes.

%th7.1 #&#
\begin{theorem}\label{rstable}
Let $\xi$ and $\eta$ be invariant random measures satisfying the conditions
of Theorem~\ref{main3}, and suppose $\tau\dvtx\Omega\times\R\rightarrow
\R$
is the allocation rule constructed in the theorem. Then $\tau$ is right-stable.
\end{theorem}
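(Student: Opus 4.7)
The plan is to exploit the characterisation of $\tau$ via the continuous function $f(u):=\xi[0,u]-\eta[0,u]$ (with the sign convention for $u<0$ used in the proof of Theorem~\ref{main3}), for which
$$\tau(s)=\inf\{u>s\colon f(u)=f(s)\}.$$
The condition $\tau(s)\geq s$ is immediate. It therefore suffices to show that
$$E:=\{(s,t)\colon t<s\leq\tau(t)<\tau(s)\}$$
is $\xi\otimes\xi$-null $\BP$-a.e., and I would split along the boundary $s=\tau(t)$ versus the interior $s<\tau(t)$.

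First, for each fixed $t$ the slice $\{s\colon s=\tau(t)\}$ is a single point and hence $\xi$-null by diffuseness of $\xi$; Fubini disposes of the contribution of $\{s=\tau(t)\}$ to $E$.

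For the interior part, fix a $\BP$-full set on which the conclusion of Lemma~\ref{des}(a) holds with some $\xi$-null exceptional set $N$. By Fubini, $\xi\otimes\xi$-a.e.\ pair $(s,t)$ has both coordinates outside $N$. Given such $(s,t)$ with $t<s<\tau(t)$, the definition of $\tau(t)$ gives $f(\tau(t))=f(t)$ and $f(u)\neq f(t)$ on $(t,\tau(t))$; by continuity $f-f(t)$ has constant strict sign on this interval. Applying Lemma~\ref{des}(a) at $t$ yields $t_n\downarrow t$ with $f(t_n)>f(t)$, forcing this sign to be positive, so $f(s)>f(t)=f(\tau(t))$. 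Applying Lemma~\ref{des}(a) at $s$ yields $s_n\downarrow s$ with $f(s_n)>f(s)$; for large $n$ we have $s_n\in(s,\tau(t))$ and
$$f(s_n)>f(s)>f(t)=f(\tau(t)),$$
so by the intermediate value theorem $f$ attains the value $f(s)$ somewhere in $(s_n,\tau(t))$. This gives $\tau(s)<\tau(t)$, contradicting $(s,t)\in E$.

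The main technical obstacle is bookkeeping the two layers of exceptional sets, the $\BP$-null set on which the pathwise Lemma~\ref{des} may fail and the $\xi$-null boundary $\{s=\tau(t)\}$, and combining them via Fubini to obtain the required $\BP$-a.e.\ statement about $\xi\otimes\xi$. Once these are handled, the argument reduces to the continuity and intermediate-value reasoning above, which rests entirely on the orthogonal-diffuse structure already exploited in the proof of Theorem~\ref{main3}.
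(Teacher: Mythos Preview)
Your proof is correct and follows essentially the same route as the paper's: both use Lemma~\ref{des}(a) together with continuity of $f$ to force $f>f(s)$ on $(s,\tau(s))$ for $\xi$-typical $s$, and then derive a contradiction (the paper via the chain $f(t)<f(s)<f(\tau(t))=f(t)$, you via an equivalent intermediate-value step yielding $\tau(s)<\tau(t)$). Your explicit treatment of the null boundary $\{s=\tau(t)\}$ via diffuseness of $\xi$ is a point the paper leaves implicit.
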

\begin{pf}
By Lemma~\ref{des}(a) and continuity of $f$, we have for $\xi$-a-e.
$s$ that
$f(s)<f(r)$ for all $r\in(s,\tau(s))$. Hence $\xi\otimes\xi$-almost
every pair
$(s,t)$ with $t<s\leq\tau(t) < \tau(s)$ satisfies $f(t)<f(s)<f(\tau(t))$
contradicting the definition of $\tau$.
\end{pf}

Right-stable allocation rules have a useful minimality property.

%th7.2 #&#
\begin{theorem}\label{mini}
Any right-stable allocation rule $\tau$ balancing two measures $\xi$
and $\eta$
is minimal in the sense that if $\sigma$ is another allocation rule
balancing $\xi$ and $\eta$
such that $s\leq\sigma(s) \leq\tau(s)$ for $\xi$-almost every $s\in\R
$, then
$\xi\{ s \dvtx\sigma(s)<\tau(s)\}=0$.
\end{theorem}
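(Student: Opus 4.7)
My plan is to reduce the minimality claim to a level-by-level comparison of the $\xi$-outflow under $\tau$ and under $\sigma$, exploiting the balancing property together with the forward-monotonicity $\tau,\sigma\geq \id$ (which comes from right-stability and from the hypothesis on $\sigma$, respectively).

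First, for each $r\in\R$ introduce the outflow functions
\[
G_\tau(r):=\int\I\{s\leq r<\tau(s)\}\,\xi(ds),\qquad G_\sigma(r):=\int\I\{s\leq r<\sigma(s)\}\,\xi(ds).
\]
Because $\tau(s)\geq s$ $\xi$-a.e., the set $\{\tau(s)\leq r\}$ coincides $\xi$-a.e.\ with $\{s\leq r,\tau(s)\leq r\}$, so applying the balancing identity to $(-\infty,r]$ forces $G_\tau(r)$ to equal the excess $\xi(-\infty,r]-\eta(-\infty,r]$; the analogous identity holds for $\sigma$ because $\sigma(s)\geq s$ $\xi$-a.e.\ by hypothesis. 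Hence $G_\tau(r)=G_\sigma(r)$ $\BP$-a.s.\ for each $r\in\R$.

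Second, the pointwise inequality $\sigma(s)\leq\tau(s)$ $\xi$-a.e.\ yields the inclusion $\{s\leq r<\sigma(s)\}\subset\{s\leq r<\tau(s)\}$ $\xi$-a.e. Equality of the $\xi$-integrals of these comparable indicators forces them to coincide $\xi$-a.e., so that
\[
\xi\{s\colon\sigma(s)\leq r<\tau(s)\}=0\quad\BP\text{-a.s.},\qquad r\in\R.
\]
Taking a countable union over rational $r$, and observing that every $s$ with $\sigma(s)<\tau(s)$ admits a rational $r\in[\sigma(s),\tau(s))$, delivers $\xi\{s\colon\sigma(s)<\tau(s)\}=0$ $\BP$-a.s., which is the required conclusion.

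The delicate point is the equality $G_\tau(r)=\xi(-\infty,r]-\eta(-\infty,r]$ when both cumulative measures are infinite, which is the typical situation for stationary random measures on $\R$. This is handled by localising to a bounded window $(a,b]$ and using the exact, finite mass-balance identity
\[
\int\I\{s\leq a,\,\tau(s)\in(a,b]\}\,\xi(ds)-\int\I\{a<s\leq b,\,\tau(s)>b\}\,\xi(ds)=\eta(a,b]-\xi(a,b],
\]
together with its $\sigma$-analogue; here the non-crossing clause of right-stability is what rules out $\tau$ importing more mass into $(a,b]$ from far to the left than $\sigma$ does, so that the $\tau$- and $\sigma$-inflow terms match. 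Letting $a\to-\infty$ recovers the bounded-window version of $G_\tau(r)=G_\sigma(r)$ and hence the $\xi$-nullity obtained above.
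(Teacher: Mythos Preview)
The first two paragraphs would constitute a valid proof if $\xi(-\infty,r]$ and $\eta(-\infty,r]$ were finite: then $G_\tau(r)=\xi(-\infty,r]-\eta(-\infty,r]=G_\sigma(r)$ is a genuine identity of finite numbers, and your nesting-plus-equality argument goes through. In the stationary setting, however, both half-line masses are almost surely infinite, and your third paragraph does not repair this. The localised balance identity you write down is correct, and subtracting it from its $\sigma$-analogue yields
\[
\text{Inflow}_\tau(a,b]-\text{Inflow}_\sigma(a,b]\;=\;\text{Outflow}_\tau(a,b]-\text{Outflow}_\sigma(a,b],
\]
which (when all four terms are finite) only says that $r\mapsto G_\tau(r)-G_\sigma(r)$ is \emph{constant}; you still need that constant to be zero. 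Your assertion that right-stability forces the inflow terms to match is unjustified: right-stability is a non-crossing condition on $\tau$ alone and says nothing about how $\sigma$ (which is not assumed right-stable) carries mass across the level~$a$. Nor have you verified that $G_\tau(r)$ or $G_\sigma(r)$ is even finite; without this, the step from equal integrals of nested indicators to $\xi$-a.e.\ equal indicators already fails.

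The paper uses right-stability in a different and more substantive way. The non-crossing clause gives, for $\xi$-a.e.\ $a$, that the interval $[a,\tau(a)]$ is \emph{closed} under $\tau$: for $\xi$-a.e.\ $s$ one has $s\in[a,\tau(a)]\iff\tau(s)\in[a,\tau(a)]$. On such an interval the balancing property is a finite identity, and for each $t\in[a,\tau(a)]$ one obtains
\[
\eta[a,t]=\int\I\{\tau(s)\in[a,t]\}\,\xi(ds)\;\le\;\int\I\{\sigma(s)\in[a,t]\}\,\xi(ds)=\eta[a,t],
\]
the inequality coming from $\tau(s)\in[a,t]\Rightarrow s\in[a,\tau(a)]\Rightarrow a\le s\le\sigma(s)\le\tau(s)\le t$. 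This forces $\tau=\sigma$ $\xi$-a.e.\ on $\tau^{-1}([a,\tau(a)])$, and the line is then covered by countably many such intervals together with an $\eta$-null remainder. The missing idea in your approach is precisely this: right-stability does not globally equate inflows, it manufactures \emph{finite} $\tau$-invariant windows on which the comparison of $\tau$ and $\sigma$ can be carried out without ever subtracting infinities.
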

\begin{pf}
By right-stability of $\tau$ we have, \textsl{for $\xi$-almost every $a$,}
%
%
%e7.1 #&#
\begin{equation}
\label{52} s\in\bigl[a,\tau(a)\bigr] \quad\Longleftrightarrow\quad \tau(s)\in\bigl[a,\tau(a)
\bigr],\qquad \xi\mbox{-a.e. $s$}.
\end{equation}
From the assumption $s\leq\sigma(s) \leq\tau(s)$ and \eqref{52} we obtain
for any $t\in[a,\tau(a)]$ that
$\tau(s)\in[a,t]$ implies $\sigma(s)\in[a,t]$ for $\xi$-almost every
$s$. Therefore
\[
\eta[a,t]=\int\I\bigl\{\tau(s)\in[a,t]\bigr\} \xi(ds) \le\int\I\bigl\{\sigma(s)
\in[a,t]\bigr\} \xi(ds) =\eta[a,t].
\]
This implies
\[
\I\bigl\{\tau(s)\in[a,t]\bigr\}= \I\bigl\{\sigma(s)\in[a,t]\bigr\},\qquad\xi
\mbox{-a.e. $s\in\R$}.
\]
Therefore $\tau$ and $\sigma$ coincide $\xi$-almost everywhere
on $\tau^{-1}([a,\tau(a)])$.

Now fix some $b\in\R$ and recall the definition of the backwards running
minimum $m(t)= \min\{ f(s) \dvtx t \leq s \leq b\}$ and the set
$C=\{t\leq b \dvtx m(t)=f(t)\}$.
We have seen that the complement of $C$ consists of countably
many intervals $(a,\tau(a))$
as above and therefore $\tau$ and $\sigma$ coincide $\xi$-almost
everywhere on $\tau^{-1}((-\infty,b]\setminus C)$.
On the other hand, by Lemma~\ref{rest} we have
$\xi(\tau^{-1}(C))=\eta(C)=0$, as required to finish the argument.
\end{pf}

%re7.3 #&#
\begin{remark}
In the point process case the allocation rule \eqref{32} is right-stable,
and it is not difficult to show
that it is the unique right-stable allocation balancing~$\xi$ and $\eta$.
We conjecture that this uniqueness property also holds in the general case
and therefore can be added to Theorem~\ref{rstable}.
\end{remark}

%re7.4 #&#
\begin{remark}
One could define an allocation rule $\tau$ to be \emph{stable} if
\[
\xi\otimes\xi \bigl\{ (s,t) \dvtx \bigl|s-\tau(t)\bigr| < \bigl|s-\tau(s)\bigr|, \bigl|s-\tau(t)\bigr| <\bigl|t-
\tau(t)\bigr| \bigr\}=0.
\]
The rule $\tau$ of Theorem~\ref{main3} does not satisfy this. We
do not know if stable allocation rules in the above sense exist, or
if they are unique.
\end{remark}

In the remainder of this section we consider the Brownian case.
An unbiased shift $T$ is called \emph{minimal unbiased shift}
if $\BP_0\{T\ge0\}=1$ and if
any other unbiased shift $S$ such that
$\BP_0\{0\le S\le T\}=1$\vadjust{\goodbreak} and $\BP_0\{B_T\in\cdot\} = \BP_0\{B_S\in
\cdot\}$
satisfies $\BP_0\{S=T\}=1$.
The following theorem provides more insight
into the set of all minimal unbiased shifts.
The result and its proof are motivated by Proposition 2 in
\cite{monroe72}.

%th7.5 #&#
\begin{theorem}\label{pminexist} Let $T$ be an unbiased
shift embedding the probability measure $\nu$
and such that $\BP_0\{T\ge0\}=1$. Then there exists
a minimal unbiased shift $T^*$ embedding $\nu$ and such that
$\BP_0\{0\leq T^*\le T\}=1$.
\end{theorem}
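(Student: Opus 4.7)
The plan is to apply Zorn's lemma to the collection
\[
\mathcal{E}(T):=\bigl\{S : S \text{ is a nonnegative unbiased shift embedding } \nu,\ \BP_0\{S\le T\}=1\bigr\},
\]
taken modulo $\BP_0$-almost-sure equality and partially ordered by $S\preceq S'$ iff $\BP_0\{S\le S'\}=1$. This set is nonempty since it contains $T$. Any element $T^*$ that is minimal for $\preceq$ is automatically a minimal unbiased shift in the sense of the definition: a nonnegative unbiased shift $S$ with $S\le T^*$ and $B_S\sim \nu$ satisfies $S\le T$, so it lies in $\mathcal{E}(T)$, and the minimality of $T^*$ forces $S=T^*$ $\BP_0$-a.s.

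To apply Zorn's lemma I need to show that every chain $\mathcal C\subset\mathcal E(T)$ has a lower bound. I would first use a separability trick to reduce to a countable descending sequence. Fix a bounded strictly increasing function $\phi\colon[0,\infty)\to[0,1]$, say $\phi(t)=1-e^{-t}$, and set $\alpha:=\inf_{S\in\mathcal C}\BE_0\phi(S)$. Choose $S_n\in\mathcal C$ with $\BE_0\phi(S_n)\downarrow\alpha$. Because $\mathcal C$ is totally ordered, after passing to a subsequence I may assume $S_n$ is $\BP_0$-a.s.\ decreasing, and I define $T_\infty:=\lim_n S_n$. A short dichotomy then shows that $T_\infty\le S$ for every $S\in\mathcal C$: either $S$ dominates some $S_n$, whence $S\ge T_\infty$; or $S\le S_n$ for all $n$, in which case $\BE_0\phi(S)\le\alpha\le\BE_0\phi(T_\infty)$ combined with $S\le T_\infty$ and strict monotonicity of $\phi$ forces $S=T_\infty$ $\BP_0$-a.s.

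The main obstacle, which requires a genuine probabilistic argument, is to show that the pointwise decreasing limit $T_\infty$ still lies in $\mathcal E(T)$. Clearly $0\le T_\infty\le T$ $\BP_0$-a.s.\ and $T_\infty$ is a measurable function of $B$. The key point is that continuity of Brownian paths gives $B_{S_n}\to B_{T_\infty}$ and, for each fixed $t_1,\dots,t_k\in\R$, $(B_{S_n+t_i}-B_{S_n})_{i=1}^k\to (B_{T_\infty+t_i}-B_{T_\infty})_{i=1}^k$ $\BP_0$-a.s. For every $n$, the unbiased-shift property of $S_n$ tells us that the joint law of $B_{S_n}$ and this vector is the product of $\nu$ with the centred Gaussian law of $(B_{t_i})_{i=1}^k$; this product structure is preserved under the almost-sure limit. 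Passing to all finite-dimensional marginals identifies the limit law and shows that $B_{T_\infty}\sim\nu$ and that $(B_{T_\infty+t}-B_{T_\infty})_{t\in\R}$ is a two-sided Brownian motion independent of $B_{T_\infty}$, so $T_\infty\in\mathcal E(T)$.

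With these pieces in hand, Zorn's lemma produces a minimal element $T^*$ of $\mathcal E(T)$, which by the observation in the first paragraph is a minimal unbiased shift embedding $\nu$ with $\BP_0\{0\le T^*\le T\}=1$.
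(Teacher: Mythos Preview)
Your proposal is correct and follows essentially the same approach as the paper. Both arguments use a maximality principle (you invoke Zorn's lemma, the paper uses the Hausdorff maximal principle), both employ the same bounded strictly monotone function (the paper's $e^{-S}$ and your $1-e^{-S}$ are trivially equivalent) to extract a decreasing sequence from a chain, and both rely on continuity of Brownian paths together with bounded convergence to show that the pointwise limit of a decreasing sequence of unbiased shifts is again an unbiased shift embedding~$\nu$. Your dichotomy argument showing that $T_\infty$ is a lower bound for the entire chain is a point the paper handles implicitly through maximality of the chain, but the content is identical.
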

\begin{pf} Let $\mathcal{T}$ denote the set of all
unbiased shifts $S$ embedding $\nu$ and
such that $\BP_0\{0\le S\le T\}=1$.
This is a partially ordered set, where we do not
distinguish between elements that coincide $\BP_0$-a.s.
By the Hausdorff maximal principle
(see, e.g.,~\cite{Dudley}, Section~1.5) there is a \emph{maximal chain}
$\mathcal{T}'\subset\mathcal{T}$. This is a totally ordered set
that is not contained in a strictly bigger totally ordered set.
Let
\[
\alpha:=\sup_{S\in\mathcal{T}'} \BE_0 e^{-S}.
\]
Then there is a sequence $S_n$, $n\in\N$, such that
$\BE_0 e^{-S_n}\to\alpha$ as $n\to\infty$. Since $\mathcal{T}'$
is totally ordered, it is no restriction of generality
to assume that the $S_n$ are decreasing $\BP_0$-a.s.
Define $T^*:=\lim_{n\to\infty}S_n$.
By construction and monotone convergence,
%
%
%e7.2 #&#
\begin{equation}
\label{6.3} \BE_0 e^{-T^*}=\alpha.
\end{equation}
We also note that $\BP_0\{0\le T^*\le T\}=1$.

We claim that $T^*$ is a minimal unbiased shift embedding $\nu$
and first show that $T^*$ is an unbiased shift.
Let $k\in\N$, and consider continuous and bounded functions
$f\dvtx\R^k\rightarrow\R$ and $g\dvtx\R\rightarrow\R$.
Let $t_1,\dots,t_k\in\R$. Since $S_n\in\mathcal{T}$
for any $n\in\N$, we have that
%
%
%e7.3 #&#
\begin{eqnarray}
&&\BE_0 f(B_{S_n+t_1}-B_{S_n},\dots,B_{S_n+t_k}-B_{S_n})g(B_{S_n})
\nonumber
\\[-8pt]
\\[-8pt]
\nonumber
&&\qquad=\BE_0 f(B_{t_1},\dots,B_{t_k})\int g(x) \nu(dx).
\end{eqnarray}
By bounded convergence the above left-hand side
converges toward
\[
\BE_0 f(B_{T^*+t_1}-B_{T^*},\dots,B_{T^*+t_k}-B_{T^*})g(B_{T^*})
\]
as $n\to\infty$. The monotone class theorem implies
that $T^*$ is an unbiased shift embedding~$\nu$.

It remains to show the minimality property of $T^*$.
Assume on the contrary that there is some
unbiased shift $S$ embedding $\nu$ such
that $\BP_0\{0\le S\le T^*\}=1$
and $\BP_0\{S<T^*\}>0$. The last two relations
imply that
\[
\BE_0 e^{-S}>\BE_0 e^{-T^*}.
\]
By \eqref{6.3} this means that $S\notin\mathcal{T}'$.
On the other hand, since $\BP_0\{S\le T^*\le T\}=1$,
we have that $S\in\mathcal{T}$, contradicting
the maximality property of $\mathcal{T}'$.
\end{pf}

As announced in the \hyperref[sintro]{Introduction} the stopping time
%hermann 07082012:
$T^\nu$
is a minimal unbiased shift:

%th7.6 #&#
\begin{theorem}\label{pminimal}
Let $\nu$ be a probability measure on $\R$ with $\nu\{0\}=0$.
Then
%hermann 07082012:
$T^\nu$ defined by \eqref{bertoin} is a minimal
unbiased shift.
\end{theorem}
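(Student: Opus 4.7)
The plan is to combine the right-stability result (Theorem~\ref{rstable}) with the minimality of right-stable allocation rules (Theorem~\ref{mini}) and to translate everything between stopping times and their associated allocation rules via Theorem~\ref{main1} and the Palm relation of Lemma~\ref{Palmlocal}.

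First, I would observe that the stopping time $T$ from \eqref{bertoin} corresponds precisely to the allocation rule produced by Theorem~\ref{main3} applied to $\xi = \ell^0$ and $\eta = \ell^\nu$. Indeed, using the invariance \eqref{0inv} of $\ell^0$ (and analogously of $\ell^\nu$), one checks that
$$
\tau_T(s) = T\circ\theta_s + s = \inf\bigl\{u > s : \ell^0[s,u] = \ell^\nu[s,u]\bigr\},
$$
which is the allocation rule of \eqref{matching}. By Theorem~\ref{rstable}, $\tau_T$ is therefore right-stable.

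Next, suppose $S \ge 0$ is another unbiased shift embedding $\nu$ with $\BP_0\{0 \le S \le T\} = 1$. By Theorem~\ref{main1} (applied with $\mu = \delta_0$), the associated rule $\sigma := \tau_S$ balances $\ell^0$ and $\ell^\nu$. I then need to promote the pointwise bound $0 \le S \le T$, which holds $\BP_0$-a.s., to the bound $s \le \sigma(s) \le \tau_T(s)$ holding for $\ell^0$-almost every $s$, $\BP$-almost everywhere. This is exactly where Lemma~\ref{Palmlocal} enters: the event $N := \{S < 0\} \cup \{S > T\}$ is $\BP_0$-null, hence $\BQ_{\ell^0}$-null, so the refined Campbell theorem \eqref{refCy} gives
$$
\BE \int \I_{[0,1]}(s)\,\I_N(\theta_s B)\,\ell^0(ds) = 0,
$$
and by shift-invariance of $\BP$ and $\ell^0$ the same holds on every interval. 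This translates into $0 \le S\circ\theta_s \le T\circ\theta_s$ for $\ell^0$-a.e.\ $s$, $\BP$-a.e., i.e.\ $s \le \sigma(s) \le \tau_T(s)$.

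Now Theorem~\ref{mini} applies and yields $\ell^0\{s : \sigma(s) < \tau_T(s)\} = 0$ $\BP$-a.e. Running the Palm argument in reverse, the event $\{S \ne T\}$ has $\BQ_{\ell^0}$-measure zero, hence $\BP_0$-measure zero, which gives $\BP_0\{S = T\} = 1$ and establishes the minimality of $T$. The only genuinely delicate step is the back-and-forth between the $\BP_0$-a.s.\ statements about the random times $S, T$ and the $\ell^0$-a.e., $\BP$-a.e.\ statements about the allocation rules $\sigma, \tau_T$, but this is handled cleanly by Lemma~\ref{Palmlocal} together with \eqref{refCy}.
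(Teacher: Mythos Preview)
Your proposal is correct and follows essentially the same route as the paper: apply Theorem~\ref{main1} to get that $\tau_S$ and $\tau_T$ both balance $\ell^0$ and $\ell^\nu$, invoke Theorem~\ref{rstable} for right-stability of $\tau_T$, then Theorem~\ref{mini} for minimality, and conclude $\BP_0\{S=T\}=1$. You are in fact more explicit than the paper about the Palm translation (via Lemma~\ref{Palmlocal} and \eqref{refCy}) between $\BP_0$-a.s.\ statements on $S,T$ and $\ell^0$-a.e., $\BP$-a.e.\ statements on $\sigma,\tau_T$; the paper compresses this into ``The assumptions yield\ldots'' and ``This readily implies\ldots''.
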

\begin{pf}
Let $S$ be an unbiased shift embedding $\nu$ and such that
$\BP_0\{0\le S\le T^\nu\}~=~1$.
Theorem~\ref{main1}
implies that the allocation rules $\tau_S$ and $\tau_{T^\nu}$ balance
$\ell^0$ and $\ell^\nu$. By Theorem~\ref{rstable}, $\tau_{T^\nu}$ is
right-stable
$\BP$-a.e. The assumptions yield
$\ell^0\{s\dvtx s\le\tau_S(s)\le\tau_{T^\nu}(s)\}=0$ $\BP$-a.e.
By Theorem~\ref{mini}
we therefore have $\ell^0\{s \dvtx\tau_S(s)<\tau_{T^\nu}(s)\}=0$ $\BP$-a.e.
This readily implies that $\BP_0\{S=T^\nu\}=1$.
\end{pf}

%s8 #&#
\section{Moments of unbiased shifts}\label{secproperties}

In this section we consider the Brownian case and discuss moment
properties of unbiased shifts. The following
two theorems together were stated as Theorem~\ref{t14complete} in the
\hyperref[sintro]{Introduction}.

%th8.1 #&#
\begin{theorem}\label{t14lower}
Suppose $\nu$ is a target distribution with $\nu\{0\}=0$, and the
stopping time $T\geq0$ is an
unbiased shift embedding~$\nu$. Then
\[
\BE_0 T^{1/4}=\infty.
\]
\end{theorem}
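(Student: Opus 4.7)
The approach is by contradiction via iteration of the unbiased shift. Assume $\BE_0 T^{1/4}<\infty$.

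First, iterate. Set $T_0:=0$ and $T_{n+1}:=T_n+T\circ(\theta_{T_n}B-B_{T_n})$. Because $T$ is a stopping time, the strong Markov property implies that the shifted and centred process $\theta_{T_n}B-B_{T_n}$ is a Brownian motion independent of $\mathcal{F}_{T_n}$, so the increments $T_{n+1}-T_n$ are iid copies of $T$ and $B_{T_{n+1}}-B_{T_n}$ are iid copies of $B_T\sim\nu$. Consequently $T_n$ is a nonnegative stopping time distributed as a sum of $n$ iid copies of $T$, and $B_{T_n}$ has distribution $\nu^{*n}$.

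Second, apply the Burkholder--Davis--Gundy inequality with exponent $p=1/2$ to the stopping time $T_n$ of Brownian motion:
$$\BE_0 T_n^{1/4}\;\ge\; c\,\BE_0\bigl(\sup_{0\le s\le T_n}|B_s|\bigr)^{1/2}\;\ge\; c\,\BE_0|B_{T_n}|^{1/2}.$$
Since $\int|x|\,d\nu<\infty$, the random walk $B_{T_n}$ obeys the strong law of large numbers: when $\mu:=\int x\,d\nu\ne 0$, $B_{T_n}/n\to\mu$ almost surely and Fatou's lemma gives $\BE_0|B_{T_n}|^{1/2}\gtrsim\sqrt n$, hence $\BE_0 T_n^{1/4}\gtrsim\sqrt n$. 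When $\mu=0$ but $\nu\ne\delta_0$ (guaranteed by $\nu\{0\}=0$), the central limit theorem (or a stable limit when $\int x^2\,d\nu=\infty$) gives a positive polynomial lower bound on $\BE_0|B_{T_n}|^{1/2}$.

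Third, upper-bound $\BE_0 T_n^{1/4}$. The elementary concavity $(a+b)^{1/4}\le a^{1/4}+b^{1/4}$ applied to the iid representation gives only the crude bound $\BE_0 T_n^{1/4}\le n\,\BE_0 T^{1/4}$, which combined with the lower bound yields merely $\BE_0 T^{1/4}\gtrsim n^{-1/2}$ and no contradiction. The sharpening comes from Wald's second identity $\BE_0 B_{T_n}^2=\BE_0 T_n$ (valid when $\BE_0 T_n<\infty$): since $\BE_0 B_{T_n}^2\ge n^2\mu^2$ for $\mu\ne 0$, this already forces $\BE_0 T=\infty$, and a careful analysis of the scaling limit of the iid sum $T_n$ then pins down the tail of $T$ to be no lighter than $t^{-1/4}$, contradicting $\BE_0 T^{1/4}<\infty$.

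The main obstacle is this last sharpening. Ruling out the \emph{a priori} consistent scenario in which $T$ has a $1/2$-stable tail (under which all the estimates above would be compatible with $\BE_0 T^{1/4}<\infty$) requires using both the stopping-time property (via BDG and Wald) \emph{and} the full two-sided unbiased-shift structure. The unbiased shift gives a second Brownian motion $(B_{T-s}-B_T)_{s\ge 0}$ running backwards from $T$ and reaching $-B_T$, which effectively doubles the inverse-local-time scaling and pushes the critical tail exponent of an embedding stopping time down from the classical $1/2$ (cf.\ \cite[Theorem~2.50]{MortersPeres}, which applies to any stopping time) to the value $1/4$ asserted here.
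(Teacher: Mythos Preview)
Your proposal does not close, and you essentially acknowledge this yourself. The iteration gives a correct lower bound $\BE_0 T_n^{1/4}\ge c\,\BE_0|B_{T_n}|^{1/2}$ via BDG, and the increments $B_{T_{k+1}}-B_{T_k}$ are indeed iid with law $\nu$ (this follows from the unbiased-shift property iterated as you do). But the only upper bound you can extract from $\BE_0 T^{1/4}<\infty$ is the subadditive $\BE_0 T_n^{1/4}\le n\,\BE_0 T^{1/4}$, and this is compatible with every lower bound you produce. Your attempt to sharpen via Wald only recovers $\BE_0 T=\infty$ in the non-centred case, which is far weaker than the claim and already contained in the classical result you cite. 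The final two paragraphs are not a proof: ``a careful analysis of the scaling limit \ldots\ then pins down the tail'' and ``effectively doubles the inverse-local-time scaling'' are assertions, not arguments, and there is no mechanism in what you have written that distinguishes the exponent $1/4$ from $1/2$. Note also that your claim that the increments $T_{k+1}-T_k$ are independent is unjustified: the unbiased-shift property only gives independence of the new two-sided Brownian motion from $B_{T_k}$, not from $T_k$ or from the previous increments, and the strong Markov property controls only the forward half of the path.

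The paper's argument bypasses iteration entirely. Via the Barlow--Yor inequality it reduces the claim to $\BE_0\ell^0[0,T]^{1/2}=\infty$, and then uses the \emph{balancing} characterisation of unbiased shifts (Theorem~\ref{main1}) rather than the distributional one: since the allocation rule $\tau_T$ transports $\ell^0$ to $\ell^\nu$, a short mass-transport computation gives
\[
\BE_0\bigl[\ell^0[0,T]\wedge t\bigr]\;\ge\;\BE_0\bigl(\ell^0[0,T_t]-\ell^\nu[0,T_t]\bigr)_+,
\]
where $T_t$ is the inverse local time at level $t$. The second Ray--Knight theorem and the first-moment hypothesis on $\nu$ then give that the right-hand side is of order $\sqrt t$ by the CLT, and this growth rate of the truncated first moment is precisely what forces $\BE_0\ell^0[0,T]^{1/2}=\infty$. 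The key idea you are missing is to work on the local-time scale and exploit the balancing property directly; the exponent $1/4$ then emerges as $1/2$ (from the CLT on the local-time scale) times $1/2$ (from Barlow--Yor converting local time back to real time).
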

\begin{pf}
We start the proof with a reminder of the Barlow--Yor inequality \cite{BY},
which states that, for
any $p>0$ there exist constants $0<c<C$ such that, for all stopping times~$T$,
\[
c \BE_0T^{p/2} \le\BE_0 \sup
_x \ell^x[0,T]^{p} \le C
\BE_0 T^{p/2}.
\]
Hence it suffices to verify that $\BE_0\ell^0[0,T]^{1/2}=\infty$.

%hermann: replaced $t$ by $r$ throughout the proof
The proof of this fact uses an argument similar to that in the proof
of Theorem~2 in~\cite{HPPS09}. Let $\tau=\tau_T$ be the allocation rule
associated
with $T$, and
set $T_r=\sup\{s \geq0 \dvtx\ell^0[0,s]=r\}$ for $r > 0$,
as at \eqref{Tr}. Then, on the one hand,
\begin{eqnarray*}
&&\BE_0\int\I\bigl\{0\le s\le T_r,\tau(s)
\notin[0,T_r]\bigr\} \ell^0(ds)\\
&&\qquad = \BE_0\int
^{T_r}_0\I\bigl\{\tau(s)-s>T_r-s\bigr\}
\ell^0(ds)
\\
&&\qquad=\int^r_0\BP_0\bigl\{
\tau(T_s)-T_s>T_r-T_s\bigr\} \,ds
=\int^r_0\BP_0\{T\circ
\theta_{T_s}>T_{r-s}\circ\theta_{T_s}\} \,ds
\\
&&\qquad=\int^r_0\BP_0
\{T>T_s\} \,ds=\BE_0\bigl[\ell^0[0,T]\wedge r
\bigr],
\end{eqnarray*}
where we have used the strong Markov property at $T_s$
(or Theorem~\ref{littletwin}) %HTH
for the fourth step and change of variable for the second and fifth steps.
On the other hand, the fact that $\tau$ balances $\ell^0$ and $\ell^\nu$
easily implies that
\[
\int\I\bigl\{0\le s\le T_r,\tau(s)\notin[0,T_r]\bigr\}
\ell^0(ds)\ge \bigl(\ell^0[0,T_r]-
\ell^\nu[0,T_r]\bigr)_+.
\]
Hence, combining these two facts with the obvious fact that $\ell
^0[0,T_r]=r$, we get
%
%
%e8.1 #&#
\begin{equation}
\label{CL1} \BE_0\bigl[\ell^0[0,T]\wedge r\bigr]\ge
\BE_0 \bigl(r-\ell^\nu[0,T_r]\bigr)_+.
\end{equation}
We now show that
%
%
%e8.2 #&#
\begin{equation}
\label{CL2} \liminf_{r\to\infty}r^{-1/2}\BE_0
\bigl(r-\ell^\nu[0,T_r]\bigr)_+>0.
\end{equation}
To this end we apply a concentration inequality of Petrov for arbitrary
sums of
independent random variables; see \cite{Petrov95}, Theorem~2.22.
It shows that there exists a constant $C>0$ such that, for all $\eps>0$
and $r\geq1$,
\[
\BP \bigl\{ \ell^\nu[0,T_r] \in[r-\eps\sqrt{r}, r+\eps
\sqrt{r}] \bigr\} \leq C \eps.
\]
Now observe that $\BE_0\ell^\nu[0,T_r]=r$, which is an immediate consequence
of the second Ray--Knight theorem (see Theorem~2.3 in Chapter XI of
\cite{RevYor99}) but can also be derived from general Palm theory.
Hence, by
Markov's inequality,
\begin{eqnarray*}
\BE_0 \bigl(r-\ell^\nu[0,T_r]\bigr)_+ & =&
\tfrac12 \BE_0 \bigl|r-\ell^\nu[0,T_r]\bigr| \geq
\tfrac12 \eps\sqrt{r} \BP \bigl\{ \bigl|r-\ell^\nu[0,T_r]\bigr|
> \eps \sqrt{r} \bigr\} \\
&\geq&\tfrac12 \eps(1-C\eps) \sqrt{r}
\end{eqnarray*}
as required to prove~\eqref{CL2}. Combining~\eqref{CL1} and~\eqref{CL2} gives
\[
\liminf_{r\to\infty}r^{-1/2} \BE_0\bigl[
\ell^0[0,T]\wedge r\bigr]>0.
\]
Finally, assume for contradiction that $\BE_0[\ell^0[0,T]^{1/2}]<\infty$.
Since\break  $r^{-1/2} (\ell^0[0,T]\wedge r)\le\ell^0[0,T]^{1/2}$, dominated
convergence
implies that\break  $r^{-1/2}\BE_0[\ell^0[0,T]\wedge r]\to0$ as $r\to\infty$, which
is in contradiction to the last display.
\end{pf}

Note that the unbiased shifts
$T^\nu$ satisfy the conditions of Theorem~\ref{t14lower}
if $\nu$ has finite mean. The next result shows that they have
nearly optimal moment properties.

%th8.2 #&#
\begin{theorem}\label{t14}
Let $\nu$ satisfy $\int|x| \nu(dx)<\infty$,
and let $T = T^\nu$ be the stopping time constructed in~\eqref{bertoin}.
Then, for all $\beta\in[0,1/4)$,
%
%
%e8.3 #&#
\begin{equation}
\label{tail} \BE_0 T^{\beta}<\infty.
\end{equation}
\end{theorem}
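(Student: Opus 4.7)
The plan is to invoke the Barlow--Yor inequality and then control $\BE\sup_x\ell^x[0,T]^{2\beta}$. By Barlow--Yor (as used in the proof of Theorem~\ref{t14lower}), it suffices to show $\BE\sup_x\ell^x[0,T]^{2\beta}<\infty$ for every $2\beta<1/2$. I would parametrise in local time by setting $M_s:=\ell^\nu[0,T_s]$ and $L:=\ell^0[0,T]$, so that $T=T_L$ and $L=\inf\{s>0\colon M_s=s\}$. The process $M$ is a subordinator with $\BE_0 M_s=s$, and \eqref{vari} shows $\BE_0(M_s-s)^2\le \sigma^2 s$ with $\sigma^2:=\int|x|\,\nu(dx)<\infty$. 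Hence $N_s:=M_s-s$ is a mean-zero, square-integrable L\'evy martingale with only upward jumps, and $L$ is its first positive return to~$0$; a standard hitting-time estimate for such processes yields $\BP_0(L>s)\le C/\sqrt{s}$, so $\BE_0 L^p<\infty$ for every $p<1/2$.

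Next, combining the second Ray--Knight theorem with It\^o's excursion decomposition, I would argue that conditionally on~$L$, the process $(\ell^x[0,T_L])_{x\ge 0}$ has the law of a squared Bessel process of dimension zero started at~$L$, and similarly for $x\le 0$. Since BESQ$(0)$ is a non-negative martingale, Doob's maximal inequality yields
\[
\BP\Bigl(\sup_{x\ge 0}\ell^x[0,T_L]\ge a\,\Big|\,L\Bigr)\le \frac{L}{a}\wedge 1.
\]
Integrating in $a$ gives $\BE[\sup_{x\ge 0}\ell^x[0,T_L]^{2\beta}\mid L]\le C_\beta L^{2\beta}$ for any $2\beta<1$; taking expectations and combining with the bound on $\BE_0 L^{2\beta}$ from the previous step produces
\[
\BE\sup_x\ell^x[0,T]^{2\beta}\le C\,\BE_0 L^{2\beta}<\infty
\]
whenever $2\beta<1/2$, i.e.\ $\beta<1/4$, and the Barlow--Yor inequality then concludes.

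The main obstacle will be to make the conditional Ray--Knight statement rigorous. The second Ray--Knight theorem is usually stated for deterministic~$s$, and extending it to the random stopping time $L$ requires an excursion-theoretic argument: one needs to verify that $L$ is a stopping time in the filtration $\mathcal{G}_s=\sigma(B_u\colon u\le T_s)$, and that the sum $\sum_{s'\le L}\ell^x(e_{s'})$ (as a function of~$x$) over the stopped Poisson point process of excursions inherits the BESQ$(0)$ structure given~$L$. A secondary technical step is the hitting-time bound $\BP_0(L>s)\le C/\sqrt{s}$ for the spectrally positive L\'evy martingale~$N$, which should follow from martingale inequalities combined with~\eqref{vari} and a Wiener--Hopf or Brownian-comparison argument.
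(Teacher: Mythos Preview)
Your plan and the paper's both start by establishing the tail bound $\BP_0\{L>s\}\le C s^{-1/2}$ for $L=\ell^0[0,T]$; your L\'evy-martingale argument for this is a reasonable alternative to the paper's route (the paper instead time-changes by $\ell^0+\ell^\nu$, discretises into an i.i.d.\ mean-zero finite-variance walk via the second Ray--Knight theorem, and quotes Spitzer's theorem on sign changes).

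The genuine gap is in your second step. The claim that, conditionally on $L$, the process $(\ell^x[0,T_L])_{x\ge 0}$ is BESQ$(0)$ started at $L$ is \emph{false}, and the excursion outline you give cannot repair it. The stopping time $L$ is determined precisely by the functionals $\ell^\nu(e_{s'})=\int\ell^a(e_{s'})\,\nu(da)$ of the excursions with $s'\le L$, so conditioning on $L$ biases the joint law of those excursion local times. Concretely, take $\nu=\delta_a$ with $a>0$: then by the very definition of $L$ one has $\ell^a[0,T_L]=L$ almost surely, whereas for a BESQ$(0)$ started at $L$ the value at $x=a$ is genuinely random. Hence the conditional maximal inequality $\BP(\sup_x\ell^x[0,T_L]\ge a\mid L)\le L/a$ is not available, and without it the Barlow--Yor reduction does not close. (A naive unconditional bound $\BP(\sup_x\ell^x[0,T_L]\ge a)\le l/a+\BP(L>l)$ optimised over $l$ only yields moments of order $<1/3$, not $<1/2$.)

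The paper avoids this issue by not invoking Barlow--Yor for the upper bound at all. Instead it passes from $L$ to $T$ directly: using that $\ell^0[0,\cdot]$ has the same law as a running-maximum process of Brownian motion, it bounds
\[
\BP_0\Bigl\{\tfrac{T}{\theta^2\log T}>t\Bigr\}
\le \BP_0\{L>\sqrt t\}+\BP_0\Bigl\{\inf_{s>t}\frac{\ell^0[0,s]}{\sqrt{s/\log s}}<\tfrac{1}{\theta}\Bigr\},
\]
and controls the second term with a Chung-type small-ball estimate, obtaining an overall $t^{-1/4}$ tail. This completely sidesteps the need to control $\sup_x\ell^x[0,T]$ in terms of $L$.
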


The proof of Theorem~\ref{t14} uses a result similar
to Theorem 4(ii) in~\cite{HP05} and Theorem~2 in~\cite{HPPS09},
which is of independent interest and may also\vadjust{\goodbreak} serve as another
example for Theorem~\ref{main3}. We consider the ``clock''
\[
U_r:=\inf \bigl\{ t>0 \dvtx\ell^0[0,t]+
\ell^\nu[0,t]=r \bigr\}
\]
and random measures $\xi$ and $\eta$ on the positive reals given by
\[
\xi[0,r]:=\ell^0[0,U_r],\qquad \eta[0,r]:=
\ell^\nu[0,U_r], \qquad r\geq0.
\]

%pr8.3 #&#
\begin{proposition}\label{t41} Let $\xi$ and $\eta$ be defined as above,
and let $S:=\inf\{t>0 \dvtx\xi[0,t]=\eta[0,t]\}$. Then $\BE_0
S^{1/2}=\infty$,
but for some $c>0$ we have $\BP_0\{S>t\}\le ct^{-1/2}$, for all $t\in\R$.
\end{proposition}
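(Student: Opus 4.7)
The plan is to identify $S$ with the first return to zero of a mean-zero Lévy martingale and then to combine classical fluctuation estimates with a Ray--Knight variance computation.

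By the definition of $U_r$ as the right-continuous inverse of $t \mapsto \ell^0[0,t] + \ell^\nu[0,t]$, the random measures $\xi$ and $\eta$ on $[0,\infty)$ satisfy $\xi[0,r]+\eta[0,r]=r$ for all $r\ge 0$. Hence the defining equation $\xi[0,S]=\eta[0,S]$ simplifies to $\xi[0,S]=\eta[0,S]=S/2$, from which one reads off $U_S = T_{S/2}$ and $\ell^\nu[0,T_{S/2}] = S/2$. Setting
$$N_t := \ell^\nu[0,T_t], \qquad D_t := t - N_t, \qquad t^* := \inf\{t>0 \colon D_t = 0\},$$
this identifies $S = 2t^*$. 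The strong Markov property of $B$ at each inverse local time $T_t$ shows that $N_{t+u}-N_t$ is independent of the past and distributed as $N_u$, so $(N_t)_{t\ge 0}$ is a Lévy process; the refined Campbell formula~\eqref{refCy} applied to $\ell^\nu$ gives $\BE_0 N_1 = 1$, so $D$ is a mean-zero martingale; and the second Ray--Knight computation already carried out in~\eqref{vari} yields $\BE_0 N_1^2 \leq 1 + \int |x|\,\nu(dx) < \infty$, so $\BE_0 D_t^2 = \sigma^2 t$ for some $\sigma>0$.

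For the upper tail bound I would observe that $\{t^* > n\} \subseteq \{D_k > 0 \textrm{ for all } k=1,\ldots,n\}$ for every integer $n$. Since the integer-time skeleton $(D_n)_{n\in\N}$ is a mean-zero random walk with variance $\sigma^2$, the classical Sparre Andersen/Feller estimate yields $\BP\{D_1>0,\ldots,D_n>0\}\leq c/\sqrt{n}$, and monotonicity in $t$ then gives $\BP\{S>t\}\leq c_0/\sqrt{t}$ for all $t>0$.

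For the divergent half-moment I would mimic the key step of the proof of Theorem~\ref{t14lower} in the time-changed coordinates. The balancing of $\xi$ and $\eta$ by the allocation rule~\eqref{matching} together with the strong Markov property at $T_t$ yields $\BE_0[S\wedge 2t]\geq 2\,\BE_0 D_t^+$ for every $t>0$, and the central limit theorem for $D$ gives $\BE_0 D_t^+ \sim \sigma\sqrt{t/(2\pi)}$, so $\BE_0[S\wedge t]\geq c_1\sqrt{t}$ for all large $t$. The main obstacle is upgrading this integral lower bound to a pointwise lower bound of the form $\BP\{S>t\}\geq c_2/\sqrt{t}$ required for divergence. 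Splitting $\int_0^t \BP\{S>s\}\,ds = \int_0^{\alpha t} + \int_{\alpha t}^t$, using the pointwise upper bound to estimate $\int_0^{\alpha t}\BP\{S>s\}\,ds \leq 2c_0\sqrt{\alpha t}$, and choosing $\alpha$ small enough that $2c_0\sqrt{\alpha} < c_1/2$, one extracts $\BP\{S>\alpha t\} \geq c_2/\sqrt{t}$ for all large $t$. The layer-cake identity $\BE_0 S^{1/2} = \tfrac12 \int_0^\infty \BP\{S>s\}\,s^{-1/2}\,ds$ then diverges logarithmically.
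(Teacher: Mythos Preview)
Your reduction to $D_t = t - \ell^\nu[0,T_t]$ is essentially the paper's approach: your increments $D_k - D_{k-1} = 1 - \ell^\nu(T_{k-1},T_k]$ are exactly the paper's $1 - Y_{k-1}$, and both upper-bound arguments bottom out in the Spitzer estimate for a mean-zero finite-variance walk. Your shortcut via the identity $S = 2\ell^0[0,T]$ is in fact a bit cleaner than the paper's two-term bound through $R_\sigma$, and your treatment of $\BE_0 S^{1/2}=\infty$---feeding the upper tail bound back into the integral estimate $\BE_0[S\wedge t]\ge c_1\sqrt t$ to extract a pointwise lower tail bound---is a welcome addition, since the paper only refers the reader to \cite{HPPS09} for that half.

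There is, however, one genuine slip. The process $D$ is spectrally negative: it has drift $+1$ and only downward jumps, one for each excursion of $B$ from $0$, of size equal to the $\ell^\nu$-mass of that excursion. At the level $t_0:=\ell^0[0,T]$ it jumps from $D_{t_0-}>0$ to $D_{t_0}=-\ell^\nu(T,T_{t_0}]$, and this is strictly negative almost surely, because at time $T$ the path sits in the support of $\nu$ and accumulates strictly positive $\ell^\nu$-mass before the excursion ends at $T_{t_0}$. Hence $D$ overshoots zero, your literal $t^*=\inf\{t>0:D_t=0\}$ strictly exceeds $t_0=S/2$, and the inclusion $\{t^*>n\}\subset\{D_k>0,\,k\le n\}$ fails outright, since $D$ can be negative on $[1,n]$ without ever equalling zero. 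The fix is to set $t^*:=\inf\{t>0:D_t\le 0\}$; then $t^*=t_0$, $S=2t^*$, and both halves of your argument go through unchanged.
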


\begin{pf}
The proof of
$\BE_0 S^{1/2}=\infty$ is very similar to Theorem~2
in \cite{HPPS09} and is therefore omitted. We prove here the upper
bound for the
tail asymptotics (only this part is needed).
This result is similar to Theorem~6(ii) in \cite{HPPS09}, but
due to the specific form of $S$ we can use a more direct argument.

For any $i\in\N$ let $Y_i=\eta\{ s\geq0 \dvtx i-1 < \xi[0,s]\leq i\}$.
As in the proof of Theorem~\ref{t14lower}
the second Ray--Knight theorem implies that $Y_i$ has mean one.
Together with Jensen's inequality we get
%
%
%e8.4 #&#
\begin{equation}
\label{vari}\qquad \BE_0 \bigl[Y_i^2\bigr] =
\BE_0 \bigl(\ell^\nu[0,T_1]\bigr)^2
\le\BE_0 \int\bigl(\ell ^x[0,T_1]
\bigr)^2 \nu(dx) =\int\bigl(1+|x|\bigr) \nu(dx),
\end{equation}
which is finite by assumption. Summarising, the sequence
$Y_1, Y_2, \ldots$ is an i.i.d. sequence of random variables with mean
one and
finite variance.
Define, for~$n\in\N$,
\[
R_n:=\sum_{i=1}^n
1+Y_i,\qquad U_n:=\sum_{i=1}^n
1-Y_i.
\]
Let $\sigma:=\inf\{n\ge1 \dvtx U_n<0\}$, and
fix $a\in(0,1/2)$. Then, for any $t>0$,
\[
\BP_0\{S>t\}\le\BP_0\{R_\sigma>t\}\le
\BP_0\{\mbox{$U_n \ge0$ for all $n\le at$}\} +
\BP_0\{R_{\lfloor at\rfloor}>t\}.
\]
By a classical result of Spitzer~\cite{Spitzer60}, see also
\cite{Feller}, Theorem~1a in Section~XII.7,
the first term on the above right-hand side
is bounded by a constant multiple of $(at)^{-1/2}$.
By Chebyshev's inequality, we have
\[
\BP_0\{R_{\lfloor at\rfloor}>t\} \leq\frac1{\bigl(t-2\lfloor at\rfloor
\bigr)^2} \BE_0 \bigl[\bigl(R_{\lfloor at\rfloor}-2\lfloor at
\rfloor\bigr)^2 \bigr] = \frac{\lfloor at\rfloor}{(t-2\lfloor at\rfloor)^2} \BE_0
\bigl[(1-Y_1)^2\bigr],
\]
which is bounded by a constant multiple of $t^{-1}$. This completes the proof.
\end{pf}

\begin{pf*}{Proof of Theorem~\ref{t14}} The variable $S$, defined
in Proposition~\ref{t41}, satisfies
\[
S=\ell^0[0,T]+\ell^\nu[0,T]=2\ell^0[0,T].
\]
It remains to relate the tail behaviour of
$\ell^0[0,T]$ (which we know) to that of
$T$ (which we require).
To this end we observe\vadjust{\goodbreak} that for $\theta\in\R$ and $t>\frac34$,
using \cite{MortersPeres}, Theorem~6.10,
\begin{eqnarray*}
&&\BP_0 \biggl\{\inf_{s>t} \frac1{\sqrt{s/\log s}}
\ell^0[0,s] < 1/\theta \biggr\}\\
&&\qquad = \BP_0 \biggl\{\inf
_{s>t} \frac1{\sqrt{s/\log s}} \max_{0\leq r\leq s}
|B_r| < 1/\theta \biggr\}
\\
&&\qquad\leq\sum_{k=0}^\infty\BP_0
\biggl\{ \frac1{\sqrt{t+k}} \max_{0\leq r\leq t+k} |B_r| <
\frac{2}{\theta\sqrt{\log(t+k)}} \biggr\}.
\end{eqnarray*}
By a step in the proof of Chung's law of the iterated logarithm
(see, e.g.,~\cite{JP75}, equation~(2.1)),
\[
\BP_0 \biggl\{\frac1{\sqrt{t}}\max_{0\leq r\leq t}
|B_r| <x \biggr\} \le\frac{4}{\pi} e^{- {\pi^2}/{(8x^2)}},\qquad x>0,
\]
and hence we have
\[
\BP_0 \biggl\{\inf_{s>t} \frac1{\sqrt{s/\log s}}
\ell^0[0,s] < 1/\theta \biggr\} \le t^{-1/4}
\]
for a sufficiently large constant $\theta$. For sufficiently large~$t$
we have
\[
\BP_0 \biggl\{ \frac{T}{\theta^2 \log T}>t \biggr\} \le\BP_0
\bigl\{\ell^0[0,T]>\sqrt{t}\bigr\} +\BP_0 \biggl\{\inf
_{s>t} \frac1{\sqrt{s/\log s}}\ell^0[0,s] < 1/
\theta \biggr\},
\]
and the right-hand side in this inequality
is bounded by a constant multiple of $t^{-1/4}$.
The result follows directly by integration.
\end{pf*}

Next we turn to unbiased shifts $T$ embedding
a measure $\nu\neq\delta_0$, which need neither be stopping times,
nor nonnegative. We conjecture that any such
shift satisfies $\BE_0 |T|^{1/4}=\infty$. At the moment we
can only prove the following weaker result.

%th8.4 #&#
\begin{theorem}\label{pmoment}
If $T$ is an unbiased shift embedding a probability measure
$\nu\neq\delta_0$, then
\[
\BE_0\sqrt{|T|}=\infty.
\]
\end{theorem}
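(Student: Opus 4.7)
The plan is to extend the strategy of Theorem~\ref{t14lower} to the two-sided setting while removing the stopping-time hypothesis, using the characterisation of unbiased shifts by balancing (Theorem~\ref{main1}) together with the shift property of Theorem~\ref{littletwin}. Assume for contradiction that $\BE_0\sqrt{|T|}<\infty$. Since $\nu\ne\delta_0$ implies $\nu\{0\}<1$, Theorem~\ref{prop3} gives $\BP_0\{T=0\}=0$, and the allocation $\tau:=\tau_T$ balances $\ell^0$ and $\ell^\nu$ by Theorem~\ref{main1}.

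First I would introduce the two-sided window $W_t:=[T_{-t},T_t]$, with $T_{\pm t}$ defined as in~\eqref{Tr}, so that $\ell^0(W_t)=2t$ by construction. The balancing property of $\tau$ yields the pointwise inequality
\[
\int_{W_t}\I\{\tau(s)\notin W_t\}\,\ell^0(ds)\ge(2t-\ell^\nu(W_t))_+.
\]
Changing variables to $s=T_r$ for $r\in[-t,t]$ and applying Theorem~\ref{littletwin} (so that $\theta_{T_r}B\stackrel{d}{=}B$ under $\BP_0$), together with the shift identifications $T_t-T_r=T_{t-r}\circ\theta_{T_r}$ and $T_{-t}-T_r=T_{-(t+r)}\circ\theta_{T_r}$ for $r\ge 0$ (and the reflected analogues for $r<0$), a direct calculation in the spirit of the proof of Theorem~\ref{t14lower} produces
\[
\BE_0\int_{W_t}\I\{\tau(s)\notin W_t\}\,\ell^0(ds)=\BE_0\bigl[\ell^0([T\wedge 0,\,T\vee 0])\wedge 2t\bigr].
\]

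For the lower bound on the right-hand side, the second Ray-Knight theorem gives $\BE_0\ell^\nu(W_t)=2t$ for every probability $\nu$, and an elementary covariance computation analogous to~\eqref{vari} yields $\operatorname{Var}_0(\ell^\nu(W_t))\ge c\,t$ whenever $\nu$ has a finite first moment, where $c$ is proportional to $\iint\min(|x|,|y|)\,\nu(dx)\,\nu(dy)>0$ (this quantity is strictly positive since $\nu\ne\delta_0$); a truncation reduces the general case to this one. A standard second-moment argument then gives $\BE_0(2t-\ell^\nu(W_t))_+\ge c'\sqrt t$ for large~$t$. Combined with the above identity and monotone convergence, we conclude $\BE_0\ell^0([T\wedge 0,T\vee 0])=\infty$.

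The contradiction with $\BE_0\sqrt{|T|}<\infty$ is to be extracted via Cauchy-Schwarz,
\[
\ell^0([T\wedge 0,T\vee 0])\wedge 2t\le\sqrt{2t\cdot\ell^0([T\wedge 0,T\vee 0])},
\]
so that dominated convergence applied to $t^{-1/2}(\ell^0([T\wedge 0,T\vee 0])\wedge 2t)$, which tends pointwise to $0$ as $t\to\infty$ (since $\ell^0$ on the bounded interval $[T\wedge 0,T\vee 0]$ is a.s.\ finite), would force $\BE_0[\ell^0([T\wedge 0,T\vee 0])\wedge 2t]=o(\sqrt t)$, contradicting the lower bound, provided that $\BE_0\sqrt{\ell^0([T\wedge 0,T\vee 0])}<\infty$. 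Establishing this last bound is the main obstacle: it plays the role that the Barlow-Yor inequality does in Theorem~\ref{t14lower}, but must now be proved without the stopping-time assumption. I would attempt this via the L\'evy identity $\ell^0[0,t]\stackrel{d}{=}\sup_{s\le t}B_s$ combined with Brownian scaling (which gives $\BE_0\sqrt{\ell^0[0,t]}\le C\,t^{1/4}$ for deterministic~$t$) and the factorisation $B=\theta_{-T}Y+B_T$, in which $Y:=\theta_T B-B_T$ is a Brownian motion independent of $B_T\sim\nu$, to transfer the deterministic estimate to the random time~$T$; Jensen's inequality $\BE_0|T|^{1/4}\le(\BE_0\sqrt{|T|})^{1/2}$ would then close the argument.
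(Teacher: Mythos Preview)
Your approach is entirely different from the paper's and considerably more elaborate. The paper (crediting the idea to Alex Cox) does not use balancing or local times at all: it works directly from the definition of an unbiased shift. Writing $B^+_t:=B_{T+t}-B_T$ and $B^-_t:=B_{T-t}-B_T$, these are independent Brownian motions, independent of $B_T$. Choosing $x>0$ with $\nu[x,\infty)>0$, on the event $\{B_T\ge x\}$ one has $B^-_T=-B_T\le -x$ when $T>0$ (whence $T\ge S^-:=\inf\{t:B^-_t=-x\}$) and $B^+_{-T}=-B_T\le -x$ when $T<0$ (whence $-T\ge S^+$), so $|T|\ge S^-\wedge S^+$ on $\{B_T\ge x\}$. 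Since $(S^-,S^+)$ is independent of $B_T$, the paper concludes $\BE_0\sqrt{|T|}\ge\nu[x,\infty)\,\BE_0\sqrt{S^-\wedge S^+}$ and asserts that the right-hand side is infinite. (It is worth noting that $\BP_0\{S^->t\}\sim c/\sqrt t$, so $\BP_0\{S^-\wedge S^+>t\}\sim c^2/t$ and hence $\BE_0\sqrt{S^-\wedge S^+}<\infty$; the paper's last step therefore appears to contain a slip, though the overall strategy of bounding $|T|$ below on $\{B_T\ge x\}$ by hitting times of the two independent Brownian motions $B^\pm$ is the intended route.)

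Your own argument has a genuine gap exactly where you flag it. The claim that $\BE_0\sqrt{\ell^0([T\wedge 0,T\vee 0])}<\infty$ follows from $\BE_0\sqrt{|T|}<\infty$ is not established by your sketch. The factorisation $B=\theta_{-T}Y+B_T$ with $Y:=\theta_T B-B_T$ a Brownian motion independent of $B_T$ does \emph{not} make $Y$ independent of $T$: since $T$ is a functional of $B$, it is a functional of $(Y,B_T)$, and you therefore cannot condition on $T$ and invoke the deterministic estimate $\BE_0\sqrt{\ell^0[0,t]}\le Ct^{1/4}$ under the expectation. This is precisely the role of the Barlow--Yor inequality in Theorem~\ref{t14lower}, and that inequality genuinely requires the stopping-time hypothesis; for arbitrary $B$-measurable random times there is no such domination of local time by powers of the time itself. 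Without a replacement for this step the contradiction does not close, and the remainder of your outline (the identity in step~4 and the lower bound in step~5) becomes moot.
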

\begin{pf}
The idea of this proof is due to Alex Cox. We work under the
probability measure $\BP_0$.
By definition of an unbiased shift
$B^+:=(B_{T+ t}-B_T \dvtx t\ge0)$ and
$B^-:=(B_{T-t}-B_T \dvtx t\ge0)$ are independent
Brownian motions. Moreover, the pair $(B^+,B^-)$
is independent of $B_T$. Assume that
$B_T\ge x$, where $x>0$ is chosen such that
$\nu[x,\infty)>0$. (If there is no such $x>0$ we find
an $x<0$ such that $\nu(-\infty,x]>0$ and
assume $B_T\le x$.) If $T>0$, then $B^-_T=-B_T\le-x$,
so that
\[
T\ge S^-:=\inf\bigl\{t\ge0\dvtx B^-_t=-x\bigr\}.
\]
If $T<0$, then $B^+_{-T}=-B^-_T\le-x$,
so that
\[
-T\ge S^+:=\inf\bigl\{t\ge0\dvtx B^+_t=-x\bigr\}.
\]
Hence $|T|\ge S^-\wedge S^+=:S$. It is well known
that $\BE_0\sqrt{S^-}=\infty$ and $\BE_0\sqrt{S^+}=\infty$.
Since $S^-$ and $S^+$ are independent, this property
transfers to $S$. It follows that
%
%
%e8.5 #&#
\begin{equation}
\BE_0\sqrt{|T|}\ge\BE_0\I\{B_T
\ge x\}\sqrt{S} =\nu[x,\infty) \BE_0\sqrt{S}=\infty.
\end{equation}
\upqed\end{pf}

Unbiased shifts embedding $\delta_0$ also have bad moment properties
if they are nonnegative (or, by time-reversal, nonpositive) but not
identically zero. The result can be compared with
Theorem~3(i) in \cite{HPPS09}. However, the proofs are very different.

%th8.5 #&#
\begin{theorem}\label{posmoment}
If $T\ge0$ is an unbiased shift such that $\BP_0\{B_T=0\}=1$ and
$\BP_0\{T>0\}>0$, then
\[
\BE_0T=\infty.
\]
\end{theorem}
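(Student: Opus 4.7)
I would argue by contradiction, assuming $c := \BE_0 T < \infty$. The idea is to iterate the shift and compare the linear growth of the iterated times with the sub-linear growth of Brownian local time.

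Since $B_T = 0$ $\BP_0$-a.s.\ and $T$ is an unbiased shift, $\theta_T B$ has law $\BP_0$, so $\theta_T$ is a $\BP_0$-preserving transformation on $\Omega$. A straightforward induction using $\theta_s \theta_t = \theta_{s+t}$ shows
$$
\theta_T^n = \theta_{T_n},\qquad T_n := \sum_{k=0}^{n-1} T\circ \theta_T^k,
$$
and $T_n$ coincides with the $n$-fold iterate $\tau_T^{(n)}(0)$ of the allocation rule attached to~$T$.

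The plan is now to apply Birkhoff's pointwise ergodic theorem twice to the measure-preserving system $(\Omega,\BP_0,\theta_T)$. Since $T\in L^1(\BP_0)$ by assumption, Birkhoff first gives
$$
\frac{T_n}{n}\longrightarrow T_\infty:=\BE_0[T\mid \mathcal I]\quad\BP_0\text{-a.s.},
$$
where $\mathcal I$ is the $\theta_T$-invariant sub-$\sigma$-algebra. Because $\BE_0 T_\infty = c<\infty$, the limit $T_\infty$ is $\BP_0$-a.s.\ finite, hence $T_n=O(n)$ $\BP_0$-a.s. Second, the invariance \eqref{0inv} of local time yields
$$
\ell^0[0,T_n]=\sum_{k=0}^{n-1}\ell^0[T_k,T_{k+1}]
=\sum_{k=0}^{n-1}(\ell^0[0,T])\circ\theta_T^k.
$$
Applying Birkhoff to the integrable truncation $\ell^0[0,T]\wedge M$ and then letting $M\to\infty$ via conditional monotone convergence gives
$$
\liminf_{n\to\infty}\frac{\ell^0[0,T_n]}{n}\;\ge\;\BE_0[\ell^0[0,T]\mid \mathcal I]\quad\BP_0\text{-a.s.}
$$
The assumption $\BP_0\{T>0\}>0$ combined with the classical fact $\ell^0[0,t]>0$ $\BP_0$-a.s.\ for every $t>0$ (a consequence of L\'evy's identity $\ell^0[0,t]\stackrel{d}{=}\max_{s\le t}B_s$) ensures $\BE_0\ell^0[0,T]>0$, so the liminf above is strictly positive on a $\BP_0$-set of positive measure.

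The contradiction comes from the sub-linear growth of Brownian local time: by L\'evy's identity and the law of the iterated logarithm for the Brownian maximum, $\ell^0[0,t]/t\to 0$ as $t\to\infty$ $\BP_0$-a.s. Combined with $T_n=O(n)$, this forces $\ell^0[0,T_n]/n\to 0$ $\BP_0$-a.s., contradicting the previous lower bound. The main obstacle is the second Birkhoff step, since $\ell^0[0,T]$ need not lie in $L^1(\BP_0)$ under the standing assumption $\BE_0 T<\infty$; the truncation argument only yields a liminf bound, which is however enough for the contradiction.
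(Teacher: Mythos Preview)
Your argument is correct and takes a genuinely different route from the paper's proof. The paper assumes $m:=\BE_0 T<\infty$, builds the probability measure $\BP^*(A)=\tfrac1m\,\BE_0\int_0^T\I\{\theta_sB\in A\}\,ds$, shows via Lemma~\ref{secondQ*} that $\BP^*$ is stationary and via Theorem~\ref{H1} that it agrees with $\BP_0$ on the invariant $\sigma$-algebra of the flow, and then invokes a total-variation coupling result of Thorisson to conclude that the Ces\`aro averages $\tfrac1t\int_0^t\BP_0\{\theta_sB\in\cdot\}\,ds$ converge to $\BP^*$; the contradiction comes from $\tfrac1t\int_0^t\BP_0\{|B_s|\le r\}\,ds\to 0$, which forces $\BP^*\{|B_0|\le r\}=0$ for all $r>0$.

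Your approach instead iterates the single map $\theta_T$, applies Birkhoff twice, and plays the linear lower bound for $\ell^0[0,T_n]$ against the $\sqrt{t}$-growth of local time. This is more elementary and self-contained: it avoids the $0$--$1$ law of Theorem~\ref{H1} (which in turn rests on Theorem~\ref{littletwin}), the auxiliary stationary measure, and the external total-variation result. The paper's argument, by contrast, stays within its Palm-theoretic framework and reuses machinery already built. Note that the truncation step suffices exactly as you say: you only need $\BE_0[\ell^0[0,T]\wedge M\mid\mathcal I_{\theta_T}]>0$ with positive probability for one fixed $M$, and the case split ``$T_n\to\infty$'' versus ``$T_n$ bounded'' (both possible when $T_\infty=0$) is handled cleanly by monotonicity of $t\mapsto\ell^0[0,t]$ and $\ell^0[0,t]=o(t)$.
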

\begin{pf}
We assume for contradiction that $m:=\BE_0 T<\infty$.
Define a probability measure $\BP^*$ on $\Omega$ by
setting
$
\BE_{\BP^*} f(B)= \frac1m \BE_0\int_0^{T} f(\theta_s B) \,ds
$
for each bounded nonnegative measurable function~$f$.
By Lemma~\ref{secondQ*}, $\BP^*$ is stationary.
%
%H: see the new Lemma~\ref{secondQ*} in the mass-stationarity section
%We first show that $B$ under $\BP^*$
%Indeed, for each
% & = \frac1m \BE_0 \int_t^{T} f(\theta_s B) ds +
% & = \frac1m \BE_0\int_t^{T} f(\theta_s B) ds +
% &= \frac1m \BE_0 \int_0^{T} f(\theta_s B) ds.
%As the right-hand side is independent of~$t$, we infer stationarity of
%$B$
%under $\BP^*$.
%
%Next we %note
To show
that, on the invariant $\sigma$-algebra~$\mathcal I$,
the process $B$ has
the same distribution under $\BP^*$ as under $\BP_0$%H:. To this end
, take
$A\in\mathcal I$
and recall from Theorem~\ref{H1} that
$\BP_0\{B\in A\}\in\{0,1\}$.
But $\BP^*\{B\in A\}=\frac1m \BE_0 \I\{B\in A\}T=0$ or $1$
according as $\BP_0\{B\in A\}=0$ or $1$, as required. By
\cite{Thor96}, Theorem~2, we infer from this that
\[
\frac1t \int_0^t \BP_0\{
\theta_sB\in\cdot\} \,ds \to \BP^* \{B\in\cdot\},\qquad t\to\infty,
\]
with respect to the total variation norm. On the other hand, for every $r>0$,
\[
\frac1t \int_0^t \BP_0
\bigl\{|B_s|\leq r\bigr\} \,ds \to0,\qquad  t\to\infty,
\]
implying $\BP^*\{|B_0|\le r \}=0$ for all $r>0$, which is a
contradiction.
\end{pf}

In contrast to the two
theorems above,
we shall see below that unbiased shifts can have \emph{good} moment
properties if they can assume both signs.

%ex8.6 #&#
\begin{example}\label{exexpon}
We construct a nonzero unbiased shift $T$ embedding $\delta_0$, which has
$\BE e^{\lambda|T|}<\infty$ for some $\lambda>0$.
Let $\{(a_i,b_i) \dvtx i\in\Z\}$ be the countable
collection of maximal nonempty intervals $(a,b)$ with
the property that $B_t\neq0$ for all $a<t<b$ and $|B_s|\geq1$ for some
$s\in(a,b)$.
We assume that the collection is ordered
such that $b_i<a_{i+1}$ for all~$i\in\Z$. We define an allocation rule
$\tau$
by the requirement that, for $b_i<s<a_{i+1}$,
\[
\tau(s)= \cases{ \sup\bigl\{r<a_{i+1}\dvtx\ell^0(r,a_{i+1})=
\ell^0(b_i,s)\bigr\}, &\quad $\mbox{if $\ell^0(b_i,s)
\leq\frac12\ell^0(b_i,a_{i+1})$},$\vspace*{2pt}
\cr
\inf\bigl\{r>b_{i}\dvtx\ell^0(s,a_{i+1})=
\ell^0(b_i,r)\bigr\}, &\quad  $\mbox{if $\ell^0(b_i,s)>
\frac12\ell^0(b_i,a_{i+1})$}.$}
\]
It is easy to see that $\tau$ balances $\ell^0$ with itself,
and hence by Theorem~\ref{main1}, we have that
$T=\tau(0)$ is an unbiased shift embedding $\delta_0$. Moreover,
we have $|T|\le S_1+S_2$ where
$S_1=\inf\{t>0 \dvtx |B_t|=1\}$ and $S_2=-\sup\{t<0 \dvtx\break |B_t|=1\}$.
$S_1$ and $S_2$ are obviously
independent and identically distributed, and it is easy to see that they,
and hence $|T|$, have the required moment property.
\end{example}

%re8.7 #&#
\begin{remark} If $T\geq0$ is an unbiased shift such that $\BP_0\{
B_T=0\}=1$
and $\BP_0\{T>0\}>0$, then
we conjecture that $\BE_0\sqrt{T}=\infty$ (strengthening Theorem~\ref
{posmoment}),
but we cannot prove this
without additional assumptions. One such
assumption [covering $T_r$ defined in \eqref{Tr} for $r>0$]
is that $\BP_0\{T>s\}>0$ for some $s>0$
such that $\{T>s\}$ is $\BP_0$-almost surely in the $\sigma$-algebra
generated by $\{B_t\dvtx t\le s\}$.
Indeed, in this case we have
\[
\BE_0 \sqrt{|T|}\ge\BE_0 \I\{T >s\}\sqrt{T} \ge
\BE_0 \I\{T >s\}\sqrt{s+T_0\circ\theta_s},
\]
where $T_0:=\inf\{t>0\dvtx B_t=0\}$. By the Markov property
\[
\BE_0 \sqrt{|T|}\ge\BE_0 \I\{T >s\}\BE_{B_s}
\sqrt{T_0}=\infty,
\]
since $\BE_{x}\sqrt{T_0}=\infty$ for all $x\ne0$ and
$\BP_0\{B_s=0\}=0$. Note that this argument does not use that $T$ is unbiased.
\end{remark}

%s9 #&#
\section{Unbiased shifts of L\'evy processes}\label{secremarks}

In this section we extend some of our previous results
to a larger class of L\'evy processes.
A \emph{L\'evy process} is a right-continuous real-valued stochastic process
$X=(X_t)_{t\in\R}$ with left-hand limits and $X_0=0$,
having independent and stationary increments; see, for example, \cite
{Bert02,Kallenberg}.
In particular the (left-continuous) process $(X_{-t})_{t\ge0}$ is
independent of $X^+:=(X_t)_{t\ge0}$
and has the same finite-dimensional distributions as $-X^+$. We assume
that $X$ is \emph{recurrent};
see \cite{Bert02} for a definition.

For convenience, we also assume that $X$ is given as the identity on
its canonical space
$(\Omega,\mathcal{A},\BP_0)$, where $\Omega$ is the set
of all right-continuous functions $\omega\dvtx\R\rightarrow\R$ with left-hand
limits and $\mathcal{A}$ is the Kolmogorov product $\sigma$-algebra.
As in the Brownian case we define $\BP_x:=\BP_0\{X+x\in\cdot\}$, $x\in\R$,
and $\BP$ by \eqref{statP}. This $\BP$ has the stationarity
property \eqref{Pinv}, where the shifts are defined by \eqref{shiftB}.
This setting is a special case of the one established in
Section~\ref{secPalm}.

The L\'evy--Khinchine formula
states that
%
%
%e9.1 #&#
\begin{equation}
\label{LC} \BE_0 e^{i\vartheta X_t}=e^{-t\psi(\vartheta)},\qquad  t\ge0,
\end{equation}
where
\[
\psi(\vartheta)=ia\vartheta+\frac{\sigma^2\vartheta^2}{2} +\int \bigl(1-e^{i\vartheta x}+i
\vartheta x\I\bigl\{|x|\le1\bigr\} \bigr) \Pi(dx), \qquad\vartheta\in\R.
\]
Here $a\in\R$, $\sigma^2\ge0$ and the \emph{L\'evy measure}
$\Pi$ satisfies $\Pi\{0\}=0$ and $\int x^2\wedge1 \Pi(dx)<\infty$.
% P inserted first and second for structure
We assume that, first,
\[
\int\Re \biggl(\frac{1}{u+\psi(\vartheta)} \biggr) \,d\vartheta<\infty,\qquad u>0,
\]
which means that points are not essentially polar,
and, second, that either $\sigma^2>0$ or $\int|x|\wedge1 \Pi
(dx)=\infty$,
which means that the L\'evy process is of unbounded variation.
These two assumptions imply that the origin is regular for itself;
see Theorem~8 in~\cite{Bret71}. Theorem~4 in \cite{GeKe72} then
implies that there are random (local time) measures $\ell^x$, $x\in\R$,
such that $(\omega,x)\mapsto\ell^x(\omega,C)$ is measurable
for all Borel sets $C\subset\R$, and \eqref{occupation} holds.
Moreover, $\ell^x$ is $\BP_0$-a.s. diffuse for any $x\in\R$.
In order to apply the techniques of this paper we need a \emph{perfect}
version of local times satisfying \eqref{0inv}, \eqref{0x},
and \eqref{0y}. To achieve this we assume the conditions
$(R_\beta)$ and $(H)$ of \cite{BPT86}, Theorem 1.2.
We do not formulate these (somewhat technical) assumptions here, but
only mention that
they are satisfied by a strictly $\alpha$-stable L\'evy process,
whenever $\alpha>1$.
[The case $\alpha=2$ corresponds to Brownian motion
while for $\alpha<2$ the L\'evy measure is given by
$\Pi(dx)=c_+x^{-\alpha-1} \,dx$ on $(0,\infty)$
and $\Pi(dx)=c_-|x|^{-\alpha-1} \,dx$ on $(-\infty,0)$.]

As in the Brownian case we define
for any locally finite measure $\mu$ on $\R$
the invariant random measure $\ell^\mu$ by \eqref{ellnu}.
If $\mu$ is a probability measure, then we call
a random time $T$
an \emph{unbiased shift} under $\BP_\mu:=\int\BP_x \mu(dx)$ if
$(X_{T+t}-X_T)_{t\in\R}$ is independent of~$X_T$
and has distribution $\BP_0$ under $\BP_\mu$.

%th9.1 #&#
\begin{theorem}\label{main1b} Let $T$ be a random time
and $\mu,\nu$ be probability measures on~$\R$. Then
$T$ is an unbiased shift under $\BP_\mu$ and
$\BP_\mu\{X_T\in\cdot\}=\nu$ if and only if
the allocation rule $\tau_T$ defined
by \eqref{tauT} balances $\ell^\mu$ and $\ell^\nu$.
\end{theorem}
\begin{pf}
The proof of Lemma \ref{Palmlocal} yields
that $\BP_x$ is the Palm measure of $\ell^x$
with respect to $\BP$.
Therefore $\BP_\mu$ is the Palm measure of $\ell^\mu$,
and the proof of Theorem~\ref{main1a} applies without
change.
\end{pf}

%th9.2 #&#
\begin{theorem}\label{twin1levy}
Let $\mu$ be a probability measure
on $\R$, and let $S_r$, $r \in\R$, be the generalised~inverse of
$\ell^\mu$ defined as in~\eqref{Tr}. Then
$S_r$ is an unbiased shift under $\BP_\mu$ and
$\BP_\mu\{X_{S_r}\in\cdot\}=\mu$.
\end{theorem}
\begin{pf}
In order to apply Theorem \ref{mass-stat} we need
to show that
$\BP_\mu\{\ell^\mu(0,\break \infty)<\infty\}=0$.
Since the L\'evy process $-X$
also satisfies
our general assumptions, this implies
$\BP_\mu\{\ell^\mu(-\infty,0)<\infty\}=0$.
Clearly, it is enough to prove $\BP_x\{\ell^y(0,\infty)<\infty\}=0$
for all $x,y\in\R$. By the spatial homogeneity \eqref{0x}
this is equivalent to
%
%
%e9.2 #&#
\begin{equation}
\label{9.7} \BP_0\bigl\{\ell^x(0,\infty)<\infty\bigr\}=0,\qquad
x\in\R.
\end{equation}
By Proposition V.4 in \cite{Bert02} the generalised
inverse of $(\ell^0[0,t])_{t\ge0}$ is (under $\BP_0$) a (finite)
subordinator, so that \eqref{9.7} holds for $x=0$.
Spatial homogeneity implies that
$\BP_x\{\ell^x(0,\infty)<\infty\}=0$.
As the origin is regular for itself, the results in \cite{Bert02}, Chapter II
(see in particular Theorems~II.16 and II.19)
imply that our process is not only recurrent, but
that the origin is \emph{point-recurrent} and that
$T'_x:=\inf\{t\ge0\dvtx X_t=x\}$ is finite $\BP_0$-a.s.
Hence \eqref{9.7} follows from the strong Markov property
applied to~$T'_x$,
which is a stopping time with respect to a suitable
augmentation of the natural filtration.
\end{pf}

%th9.3 #&#
\begin{theorem}\label{H1levy}
Let $A\in\mathcal I$ be a
shift-invariant set.
Then either $\BP_x(A)=0$
for all $x\in\R$ or $\BP_x(A^c)=0$ for all $x\in\R$.
\end{theorem}
\begin{pf}
The proof of Theorem \ref{H1} applies provided
that $\BP_0\{\ell^x\ne0\}=1$ for $\lambda$-a.e. $x\in\R$.
But this follows from \eqref{9.7}.
\end{pf}

Thanks to Theorem \ref{main3} the previous result implies
the following generalisation of Theorem \ref{main2general}.

%th9.4 #&#
\begin{theorem}\label{twinslevy}
Let $\mu$ and $\nu$ be orthogonal probability measures on $\R$.
Then the stopping time $T:=T^{\mu,\nu}$ defined
by \eqref{bertoin3} is an unbiased shift under $\BP_\mu$ and
$\BP_\mu\{X_T\in\cdot\}=\nu$.
\end{theorem}

Theorems~\ref{main2a}, \ref{prop3} and \ref{p11} as well as
the minimality properties stated in Theorems~\ref{pminexist} and \ref{pminimal}
do also hold in the present, more general setting.
% P removed paragraph
It would be interesting to study the moment properties
of unbiased shifts of L\'evy processes.
The proof of Theorem \ref{t14complete} makes
significant use of the properties of Brownian motion.
Theorem \ref{posmoment}, however, is still true in the L\'evy case
while the proof of Theorem \ref{pmoment} can be
extended beyond the Brownian case.
% P last phrase changed as discussed

\section*{Acknowledgements}
This research started during the Oberwolfach
workshop ``New Perspectives in Stochastic Geometry.''
All support is gratefully
acknowledged. We wish to thank Sergey Foss and Takis Konstantopoulos
for helpful discussions of Theorem~\ref{tpp}, Alex Cox for
providing the idea of the proof of Theorem~\ref{pmoment}, and
Vitali Wachtel for making us aware of Petrov's concentration inequality,
which was used in the proof of Theorem~\ref{t14lower}. We also would
like to thank an
anonymous referee for insightful comments.\looseness=1

% imsref loaded by akundreckaite, 2013-07-25 08:10:16
%
% imsref loaded by akundreckaite, 2013-07-29 12:53:08

% zodis "Acknowledgments" paliekamas pagal autoriu

%suskaldyti doi

\printaddresses

\end{document}